\title{High-Order Mixed Finite Element Variable Eddington Factor Methods}
\author{\name{Samuel Olivier\textsuperscript{a,b,*}\thanks{\textsuperscript{*}corresponding author. Email: solivier@lanl.gov} and Terry S. Haut\textsuperscript{c}}
\affil{\textsuperscript{a}University of California, Berkeley, Berkeley, CA, USA; \textsuperscript{b}Los Alamos National Laboratory, Los Alamos, NM, USA; \textsuperscript{c}Lawrence Livermore National Laboratory, Livermore, CA, USA}}
\begin{document}
\maketitle 

\begin{abstract}
We apply high-order mixed finite element discretization techniques and their associated preconditioned iterative solvers to the Variable Eddington Factor (VEF) equations in two spatial dimensions. The mixed finite element VEF discretizations are coupled to a high-order Discontinuous Galerkin (DG) discretization of the Discrete Ordinates transport equation to form effective linear transport algorithms that are compatible with high-order (curved) meshes. 
This combination of VEF and transport discretizations is motivated by the use of high-order mixed finite element methods in hydrodynamics calculations at the Lawrence Livermore National Laboratory. 
Due to the mathematical structure of the VEF equations, the standard Raviart Thomas (RT) mixed finite elements cannot be used to approximate the vector variable in the VEF equations. 
Instead, we investigate three alternatives based on the use of continuous finite elements for each vector component, a non-conforming RT approach where DG-like techniques are used, and a hybridized RT method. 
We present numerical results that demonstrate high-order accuracy, compatibility with curved meshes, and robust and efficient convergence in iteratively solving the coupled transport-VEF system and in the preconditioned linear solvers used to invert the discretized VEF equations. 
\end{abstract}

\begin{keywords}
radiation transport; Variable Eddington Factor; Quasidiffusion; high-order finite elements; preconditioned iterative solvers 
\end{keywords}

\section{Introduction}
The Variable Eddington Factor (VEF) method \cite{mihalas}, also known as Quasidiffusion \cite{goldin}, is an efficient iterative method for solving the Boltzmann transport equation, a crucial component in the modeling of nuclear reactors, high energy density physics experiments, astrophysical phenomena, and medical physics. 
In VEF, the transport equation is iteratively coupled to the VEF equations, a moment-based, reduced-dimensional model of transport formed through discrete closures. The VEF closures are weak functions of the transport solution allowing the design of rapidly converging and robust iterative schemes. 
A key advantage of VEF is that the discretized VEF and transport equations do not need to be algebraically consistent to maintain this rapid convergence, even in the thick diffusion limit \cite{two-level-independent-warsa}. These so-called independent VEF methods \cite{doi:10.1080/00411459308203810} then have the flexibility to choose the discretization of the VEF equations to meet the requirements of the overall algorithm, such as computational efficiency and multiphysics compatibility. 

Mixed finite element methods are a class of discretization techniques for solving the mixed variational form of a partial differential equation. This variational form is characterized by the inclusion of multiple (typically two) physically disparate quantities resulting in a saddle point problem. By contrast, primal formulations operate on a single quantity and produce minimization problems. Mixed methods were invented to 1) allow incorporation of a constraint (e.g.~divergence free velocity in fluid flow), 2) provide direct access to an intermediate variable (e.g.~the stress in elasticity), and 3) allow a weaker formulation than the corresponding primal formulation. In the context of neutron diffusion, mixed methods are applied to the first-order, or $P_1$, form of radiation diffusion and 1) explicitly include the constraint of particle balance, 2) solve for the current in addition to the scalar flux, and 3) allow scalar flux solutions with no continuity requirements at interior element interfaces. 
Through a process called mixed finite element hybridization, the resulting block system of equations can be reduced to a positive definite system that can be efficiently solved with Algebraic Multigrid (AMG) \cite{doi:10.1137/17M1132562}. 
The block system can also be directly preconditioned through block diagonal and lower block triangular preconditioners based on applying AMG to an approximate Schur complement \cite{benzi_golub_liesen_2005}. 

In this paper, we investigate the use of mixed finite elements to solve the VEF equations in two dimensions. The research goals are to achieve high-order accuracy, compatibility with high-order (curved) meshes, and scalable preconditioned iterative solvers. The motivation for this research is that high-order mixed finite element methods on curved meshes are used in hydrodynamics calculations at the Lawrence Livermore National Laboratory (LLNL) \cite{blast,blast2}. 
% Such methods have been shown to exhibit improved robustness, symmetry preservation, and strong scaling on emerging high-performance computers when compared to low-order methods \cite{blast3}. 
In particular, we are interested in designing a discretization of the VEF equations that matches as closely as possible to that of \citet{pete}, the mixed finite element method used for radiation diffusion at LLNL. 
Such a method would 1) have element-local particle balance, 2) solve for the current directly potentially leading to high accuracy coupling to the hydrodynamics' momentum equation, and 3) allow the scalar flux to be approximated in the same finite element space as the hydrodynamics' thermodynamic variables.
In addition, a mixed finite element VEF discretization could serve as a drop-in replacement for radiation diffusion at LLNL providing a transport algorithm that allows reuse of the linear and nonlinear solvers already in place for diffusion. Mixed finite element discretizations of radiation diffusion have also been used in reactor analysis \cite{mfem_diffusion,doi:10.13182/NSE97-A28593,doi:10.13182/NSE07-A2660}. 

\citet{me} developed a lowest-order, hybridized mixed finite element discretization of the VEF equations in one spatial dimension for the linear transport problem. 
\citet{LOU2019258} and \citet{LOU2021110393} used this algorithm to form efficient, VEF-based thermal radiative transfer and radiation-hydrodynamics algorithms, respectively. 
Multi-dimensional, high-order Discontinuous Galerkin VEF discretizations with scalable solvers were developed in \cite{olivier2021family}. However, such methods do not directly solve for the current, precluding the possibility of high-order coupling to the momentum equation. Furthermore, a mixed finite element VEF discretization has immediate mathematical and implementational compatibility with the mixed methods used in the hydrodynamics framework of \cite{blast}. 

Here, we extend the lowest-order mixed finite element discretization from \citet{me} to high-order accuracy in two spatial dimensions, prescribe efficient preconditioned iterative solvers for the discretized VEF equations, and compare the performance of the hybridized and unhybridized mixed finite elements techniques. 
The extension to multiple dimensions is non-trivial due to the elevation of the VEF closure from a scalar in one spatial dimension to a symmetric tensor in multiple dimensions. 
This introduces complications in approximating the vector-valued neutron current that prevent a straightforward extension of the one-dimensional discretization. 
An early form of this work was presented in \citet{olivier_mandc}. 

The paper proceeds as follows. The VEF algorithm is introduced analytically. We present background on high-order meshes and the finite element spaces used to solve the VEF and transport equations numerically. We discuss the algorithmic connections between the VEF discretizations and a high-order DG discretization of the \Sn transport equations (e.g.~\cite{woods_thesis,graph_sweeps}). 
We then apply mixed finite element techniques to the VEF equations. We show that, due to the presence of the Eddington tensor in the VEF first moment equation, the standard Raviart Thomas (RT) mixed finite element methods \cite{RT77,raviart_thomas} are not appropriate for the VEF equations. We present two alternatives: a method where each component of the current is approximated with continuous finite elements and a non-conforming approach where the RT space is used along with DG-like numerical fluxes. 
A lower block triangular preconditioner that uses a combination of classical smoothing and AMG is defined for each of the above VEF discretizations. 
We then propose a hybridized version of the RT method that increases the computational efficiency of the RT method by reducing the number of globally coupled unknowns. 

Next, numerical results are presented. We investigate the accuracy of the methods on a third-order mesh using the method of manufactured solutions. 
Convergence of the fixed-point iteration is tested in the thick diffusion limit on both an orthogonal mesh and a severely distorted third-order mesh generated using a Lagrangian hydrodynamics code. 
The robustness of the preconditioned iterative solvers to mesh distortion is also investigated. 
The efficiencies of the outer fixed-point iteration and inner preconditioned linear iteration are compared on a challenging, multi-material problem. 
We then document the degraded solution quality associated with the method that uses continuous finite elements for each component of the current on a simple radiation diffusion eigenvalue problem and provide intuition for AMG's inability to effectively precondition the corresponding linear system. 
A weak scaling study is presented for the RT and hybridized RT methods showing that the discrete VEF systems can be solved with similar efficiency to that of an analogous symmetric radiation diffusion system. 
Finally, we give conclusions and recommendations for future work. 

\section{VEF Algorithm}
Here, we describe the VEF method for the steady-state, mono-energetic, fixed-source transport problem with isotropic scattering. VEF methods simultaneously solve the coupled transport and VEF equations given by: 
	\begin{subequations} \label{eq:trans_group}
	\begin{equation} \label{eq:transport}
		\Omegahat\cdot\nabla\psi + \sigma_t \psi = \frac{\sigma_s}{4\pi}\varphi + q \,, \quad \x \in \D \,,  
	\end{equation}
	\begin{equation} \label{eq:trans_bc}
		\psi(\x,\Omegahat) = \bar{\psi}(\x,\Omegahat) \,, \quad \x \in \partial\D \ \mathrm{and} \ \Omegahat\cdot\n < 0 \,,
	\end{equation}
	\end{subequations}
	\begin{subequations} \label{eq:vef_group}
	\begin{equation} \label{eq:zeroth}
		\nabla\cdot\vec{J} + \sigma_a \varphi = Q_0 \,, \quad \x \in \D \,,
	\end{equation}
	\begin{equation} \label{eq:first}
		\nabla\cdot\paren{\E\varphi} + \sigma_t \vec{J} = \vec{Q}_1 \,, \quad \x \in \D\,,
	\end{equation}
	\begin{equation} \label{eq:mlbc}
		\vec{J}\cdot\n = E_b \varphi + 2\Jin \,, \quad \x \in \partial\D \,, 
	\end{equation}
	\end{subequations}
where $\psi(\x,\Omegahat)$ is the angular flux, $\varphi(\x)$ and $\vec{J}(\x)$ the VEF scalar flux and current, respectively, $\Omegahat\in\mathbb{S}^2$ the direction of particle flow, $\D$ the spatial domain with $\partial\D$ its boundary, $\sigma_t(\x)$, $\sigma_s(\x)$, and $\sigma_a(\x) = \sigma_t(\x) - \sigma_s(\x)$ the total, scattering, and absorption macroscopic cross sections, respectively, $q(\x,\Omegahat)$ the fixed source with $Q_i = \int \Omegahat^i q \ud \Omega$ its angular moments, and $\bar{\psi}(\x,\Omegahat)$ the boundary inflow function with $\Jin(\x) = \int_{\Omegahat\cdot\n<0} \Omegahat\cdot\n\,\bar{\psi}(\x,\Omegahat) \ud \Omega$ the inflow partial current. The Eddington tensor and boundary factor are defined as: 
	\begin{subequations}
	\begin{equation}
		\E = \frac{\int \Omegahat\otimes\Omegahat\, \psi \ud \Omega}{\int \psi \ud \Omega} \,,
	\end{equation}
	\begin{equation}
		E_b = \frac{\int |\Omegahat\cdot\n|\, \psi \ud\Omega}{\int \psi \ud \Omega}\,,
	\end{equation}
	\end{subequations}
respectively. The \citet{QDBC} boundary conditions are used for the VEF equations. Observe that the transport equation (Eqs.~\ref{eq:trans_group}) is linearly coupled to the VEF equations (Eqs.~\ref{eq:vef_group}) through the scattering source and the VEF equations are nonlinearly coupled to the transport equation through the Eddington tensor and boundary factor. 
The VEF closures are weak functions of the transport solution allowing the design of efficient iterative solution techniques. 

The simplest algorithm to solve the coupled transport-VEF system is fixed-point iteration. The iteration is: 1) invert the transport equation given a scattering source from the previous iteration or an initial guess, 2) compute the Eddington tensor and boundary factor from the angular flux from stage 1), and 3) invert the VEF equations for a new scalar flux and current. The iteration is repeated until the VEF scalar flux converges. A discussion of the derivation of the above system and the application of advanced nonlinear solvers, such as Anderson acceleration, are provided in \cite{olivier2021family}. 

The following sections present methods for efficiently computing the VEF fixed-point operator numerically. We present the description of high-order meshes and the procedure for integration over arbitrary elements, the finite element spaces used to discretize the transport and VEF equations, the representation of the VEF data using finite element interpolation and angular quadrature, and finally three novel mixed finite element discretizations for the VEF equations. 

\section{Mesh and Finite Element Preliminaries} \label{sec:mesh}
This section provides background on the representation of high-order meshes and the transformations used to facilitate numerical integration over arbitrary elements. We pay particular attention to the transformation of vector-valued functions and their gradient and divergence on high-order meshes. These transformations are crucial for the implementation of the approximation techniques used for the current described in \S \ref{sec:fes}. 

\subsection{Description of the Mesh}
The domain $\D\subset \R^2$ is tesselated into a collection $\meshT$ of quadrilateral elements $K_e$ such that 
	\begin{equation}
		\D = \bigcup_{K_e \in \meshT} K_e \,. 
	\end{equation}
Each element $K_e$ is obtained as $K_e = \T_e(\hat{K})$ where $\hat{K} = [0,1]^2$ is the reference element. Let $\Qcal{m,n}$ be the space of polynomials of degree less than or equal to $m$ and $n$ in the first and second variables, respectively, with $\Qcal{m} = \Qcal{m,m}$. The mapping $\T_e \in [\Qcal{m}]^2$ is derived from a set of global control points and an element-local nodal basis. Figure \ref{fig:curved} shows an example mesh where the control points labeled 2, 7, and 12 are shared so that the mesh coordinates are continuous across the interior interface between the two elements. 
\begin{figure}
\centering
\begin{subfigure}{.49\textwidth}
	\includegraphics[width=.7\textwidth]{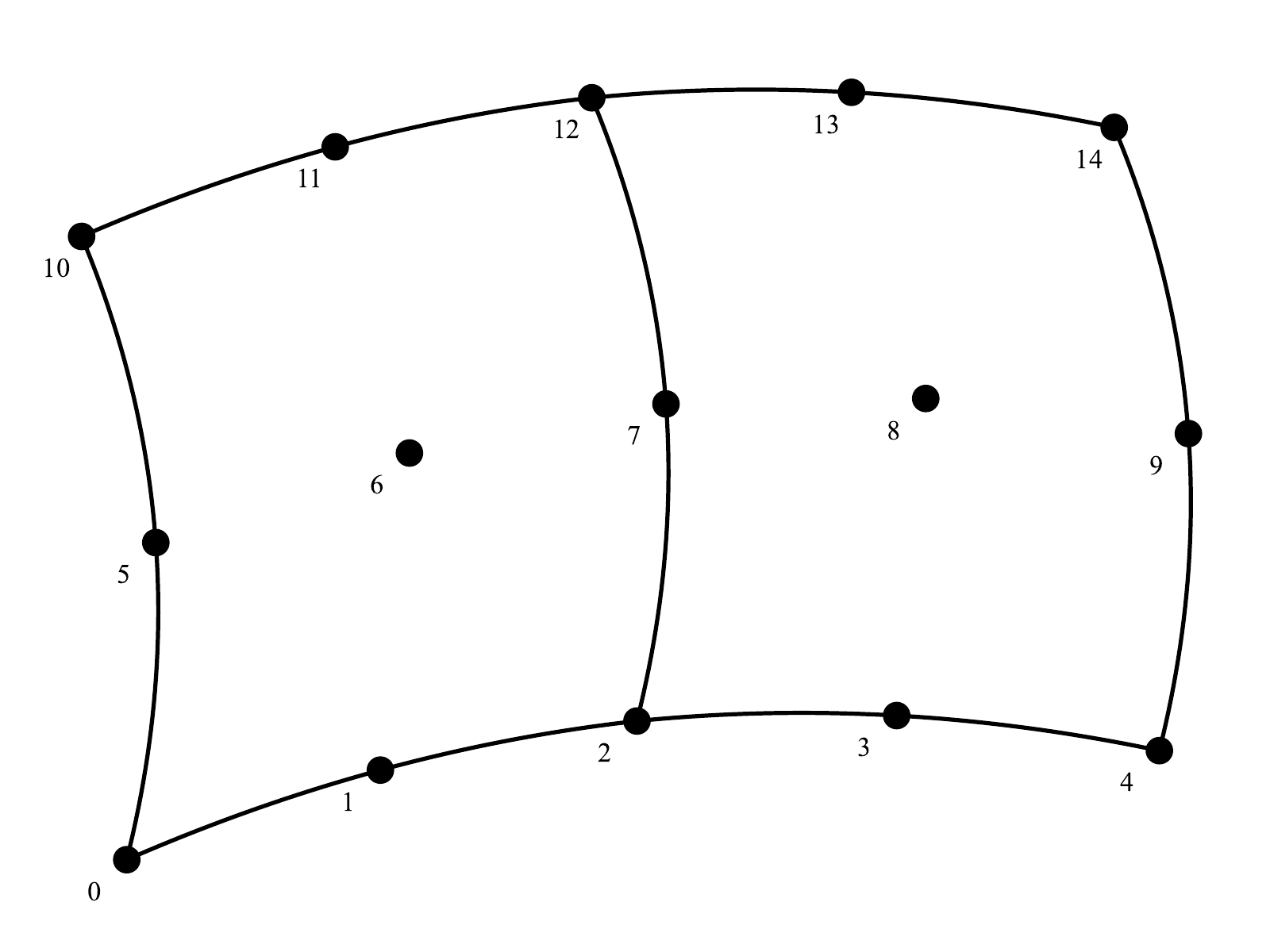}
	\caption{}
	\label{fig:curved}
\end{subfigure}
\begin{subfigure}{.49\textwidth}
	\includegraphics[width=\textwidth]{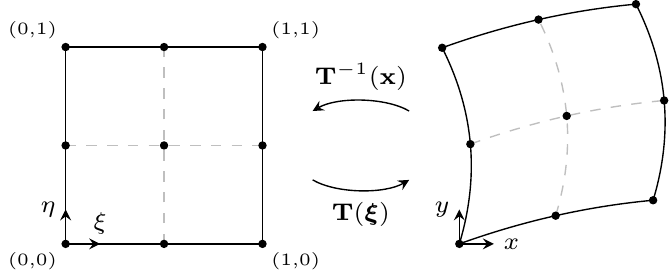}
	\caption{}
	\label{fig:eltrans}
\end{subfigure}
\caption{Depictions of (a) the mesh control points in a quadratic quadrilateral mesh and (b) the reference transformation used to describe the left element of (a).}
\end{figure}

A nodal basis for $\Qcal{m}$ is defined using Lagrange interpolating polynomials. Let $\xi_i$ denote the $m+1$ Gauss-Lobatto points in the interval $[0,1]$. The $(m+1)^2$ points $\vec{\xi}_i$ on the unit square $\hat{K} = [0,1]^2$ are given by the two-fold Cartesian product of the one-dimensional points. Let $\ell_i$ denote the Lagrange interpolating polynomial that satisfies $\ell_i(\vec{\xi}_j) = \delta_{ij}$ where $\delta_{ij}$ is the Kronecker delta. The set of functions $\{\ell_i\}$ form a basis for $\Qcal{m}$. 
For each element, the mapping is then 
	\begin{equation}
		\x(\vec{\xi}) = \T_e(\vec{\xi}) = \sum_{i=1}^{(m+1)^2} \x_{e,i} \ell_i(\vec{\xi}) 
	\end{equation}
where $\x \in K_e$, $\vec{\xi} \in \hat{K}$, and $\x_{e,i}$ are the control points corresponding to element $K_e$. Figure \ref{fig:eltrans} depicts the mesh transformation used for the left element in Fig.~\ref{fig:curved}. 

We define $\Gamma$ as the set of unique faces in the mesh with $\Gamma_0 = \Gamma\setminus \partial\D$ the set of interior faces and $\Gamma_b = \Gamma \cap \partial\D$ the set of boundary faces so that $\Gamma = \Gamma_0 \cup \Gamma_b$. We denote the outward unit normal to element $K$ as $\n_K$. On an interior face $\mathcal{F} \in \Gamma_0$ between elements $K_1$ and $K_2$, we use the convention that $\n$ is the unit vector perpendicular to the shared face $K_1 \cap K_2$ pointing from $K_1$ to $K_2$. On such an interior face, the jump, $\jump{\cdot}$, and average, $\avg{\cdot}$, are defined as 
	\begin{equation} \label{eq:jump_avg}
		\jump{u} = u_1 - u_2 \,, \quad \avg{u} = \frac{1}{2}(u_1 + u_2) \,, \quad \mathrm{on} \ \mathcal{F} \in \Gamma_0 \,, 
	\end{equation}
where $u_i = u|_{\partial K_i}$ with analogous definitions for vectors. Note that a continuous function $u$ satisfies $\jump{u} = 0$ on each interior face. 
On boundary faces, the jump and average are set to 
	\begin{equation} \label{eq:jump_avg_bdr}
		\jump{u} = u \,, \quad \avg{u} = u \,, \quad \mathrm{on} \ \mathcal{F} \in \Gamma_b \,,
	\end{equation}
and likewise for vector-valued functions on the boundary. 
The tangent vector is denoted by $\vec{\tau}$ and we use the convection that $\vec{\tau}$ is a $90^\circ$ counter-clockwise rotation of the normal vector. 

% Mesh faces are represented using an analogous transformation of the 1D surface embedded in the 2D domain. Let $\hat{\mathcal{F}} = [0,1]$ be the reference line and $\mathcal{P}_m(\hat{\mathcal{F}})$ the space of univariate polynomials of degree at most $m$. A nodal basis for $\mathcal{P}_m(\hat{\mathcal{F}})$, $\{b_i\}$, is built analogously to $\{\ell_i\}$ using Lagrange interpolating polynomials through the one-dimensional Gauss-Lobatto points. The transformation from the reference line to physical space for each face $\mathcal{F} \in \Gamma$ is then: 
% 	\begin{equation}
% 		\T_\mathcal{F}(\xi) = \sum_{i=1}^{m+1} \x_{\mathcal{F},i} b_i(\xi) \,,
% 	\end{equation}
% where $\x_{\mathcal{F},i}$ are the control points for face $\mathcal{F}$ and $\xi \in \hat{\mathcal{F}}$ is now a one-dimensional reference coordinate. 

Finally, we define the ``broken'' gradient, denoted by $\nablah$, obtained by applying the gradient locally on each element. That is, 
	\begin{equation} \label{eq:broken_grad}
		(\nablah u)|_{K} = \nabla(u|_K) \,, \quad \forall K \in \meshT \,. 
	\end{equation}
This distinction is important for the piecewise polynomial spaces discussed in \S \ref{sec:fes}. 

\subsection{Integration Transformations} \label{sec:int_trans}
The mesh transformations $\T_e$ are used to facilitate numerical integration on arbitrary elements. 
Letting $\vec{\xi} = \vector{\xi & \eta} \in \hat{K}$ denote the reference coordinates and $\x = \vector{x & y} \in \D$ the physical coordinates such that $\x(\vec{\xi}) = \T_e(\vec{\xi})$, the Jacobian of the transformation is 
	\begin{equation}
		\F_e = \pderiv{\T_e}{\vec{\xi}} = \begin{bmatrix} 
			\pderiv{x}{\xi} & \pderiv{x}{\eta} \\ 
			\pderiv{y}{\xi} & \pderiv{y}{\eta} 
		\end{bmatrix} \,, 
	\end{equation}
with $J_e = |\F_e|$ its determinant. The partial derivatives of the mesh transformation are computed by taking derivatives of the nodal basis functions. In other words, 
	\begin{equation}
		\F_e = \sum_{i=1}^{(m+1)^2} \x_{e,i} \otimes \hnabla \ell_i = \sum_{i=1}^{(m+1)^2} \begin{bmatrix} 
			x_{e,i} \pderiv{\ell_i}{\xi} & x_{e,i} \pderiv{\ell_i}{\eta} \\
			y_{e,i} \pderiv{\ell_i}{\xi} & y_{e,i} \pderiv{\ell_i}{\eta} 
		\end{bmatrix} \,,
	\end{equation}
where $\x_{e,i} = \vector{x_{e,i} & y_{e,i}}$ and $\hnabla$ denotes the gradient with respect to $\vec{\xi}$. 

A mesh transformation is called affine when it can be written as
	\begin{equation}
		\T = \mat{A}\vec{\xi} + b 
	\end{equation}
where $\mat{A}\in\R^{2\times 2}$ and $b\in\R^2$ are constant with respect to $\vec{\xi}$. In such case, the Jacobian matrix is $\mat{F} = \mat{A}$ and the Hessian of the transformation, defined as $\frac{\partial^2 \T}{\partial \vec{\xi}^2}$, is identically zero. Quadrilateral elements obtained by scaling, stretching along the $\xi$ or $\eta$ axes, or rotating the reference element are all affine while general quadrilateral elements, such as trapezoidal elements, and curved elements are not affine. 

In this document, integration over the domain is implicitly computed in reference space using the following sum: 
	\begin{equation} \label{eq:int_sum}
		\int_\D \paren{\cdot} \ud \x = \sum_{K\in\meshT} \int_K \paren{\cdot} \ud \x = \sum_{K\in\meshT} \int_{\hat{K}} \paren{\cdot}\,J\!\ud \vec{\xi} \,. 
	\end{equation}
This provides a systematic way to integrate over arbitrary domains composed of arbitrarily shaped elements as well as the use of numerical quadrature rules defined on the reference element $\hat{K}$. 
We now discuss the transformations used to represent the integrand of Eq.~\ref{eq:int_sum} in reference space. 
For a scalar function $u : \D \rightarrow \R$, denote by $\hat{u} : \hat{K} \rightarrow \R$ its representation in reference space. The functions $u$ and $\hat{u}$ are related by 
	\begin{equation} \label{eq:scalar_trans}
		u(\x) = \hat{u}(\T^{-1}(\x)) \,.  	
	\end{equation}
Integration over the physical element is then equivalent to 
	\begin{equation}
		\int_K u \ud \x = \int_{\hat{K}} \hat{u}\, J\!\ud\vec{\xi} \,. 
	\end{equation}
Using the chain rule, the gradient of a scalar function transforms as 
	\begin{equation} \label{eq:scalar_trans_grad}
		\nabla \hat{u} = \begin{bmatrix} 
			\pderiv{\hat{u}}{\xi}\pderiv{\xi}{x} + \pderiv{\hat{u}}{\eta}\pderiv{\eta}{x} \\
			\pderiv{\hat{u}}{\xi}\pderiv{\xi}{y} + \pderiv{\hat{u}}{\eta}\pderiv{\eta}{y} 
		\end{bmatrix}
		= \mat{F}^{-T} \hnabla \hat{u} \,. 
	\end{equation}
In this way, the gradient in physical space can be computed using the Jacobian of the mesh transformation and the gradient in reference space. 

For vector-valued functions, the basis the vector is defined on must also be considered. The simplest basis is the canonical basis, $\e_i$, corresponding to the $x$ and $y$ axes. In this case, a vector $\vec{v} : \D \rightarrow \R^2$ is 
	\begin{equation} \label{eq:scalar_copies}
		\vec{v} = v_1 \e_1 + v_2 \e_2 
	\end{equation}
and each component transforms independently as $v_i = \hat{v}_i(\T^{-1}(\x))$. Writing
	\begin{equation}
		\nabla\vec{v} = \begin{bmatrix} 
			\pderiv{v_1}{x} & \pderiv{v_1}{y} \\ 
			\pderiv{v_2}{x} & \pderiv{v_2}{y}
		\end{bmatrix} = \begin{bmatrix} 
			(\nabla v_1)^T \\ 
			(\nabla v_2)^T 
		\end{bmatrix} \,,	
	\end{equation}
the gradient of a vector defined as in Eq.~\ref{eq:scalar_copies} transforms as 
	\begin{equation} \label{eq:scalar_copies_grad}
		\nabla\vec{v} = \begin{bmatrix} 
			(\mat{F}^{-T} \hnabla\hat{v}_1)^T \\
			(\mat{F}^{-T}\hnabla\hat{v}_2)^T 
		\end{bmatrix} \,. 
	\end{equation}
Note that defining a vector in this way does not preserve the normal or tangential components under a rotation. That is, $\vec{v}\cdot\n$ and $\vec{v}\cdot\tang$ are linear combinations of the $v_i$ instead of a single component representing the normal or tangential components, respectively. 

Alternatively, the contravariant Piola transform represents vectors on the so-called tangent basis so that the normal component can be preserved \cite{ciarlet_elasticity,piola_cisc}. Such a transformation is required by the Raviart Thomas space introduced in \S \ref{sec:fes_rt} in order to strongly enforce continuity in the normal component of the current. The contravariant Piola transform is: 
	\begin{equation} \label{eq:piola}
		\vec{v} = \frac{1}{J}\mat{F}\hvec{v}\circ\T^{-1} \,. 
	\end{equation}
Here, $\hvec{v} : \hat{K} \rightarrow \R^2$ is a vector in reference space. 
Writing the columns of the Jacobian matrix as 
	\begin{equation}
		\mat{F} = \begin{bmatrix} 
			\bvec{t}_1 & \bvec{t}_2 
		\end{bmatrix} \,,
	\end{equation}
the contravariant Piola transformation is equivalent to
	\begin{equation}
		\vec{v} = \frac{1}{J}(\hat{v}_1 \bvec{t}_1 + \hat{v}_2 \bvec{t}_2) \,.
	\end{equation}
Observe that, on the reference canonical basis $\hat{\e}_i$, $\hvec{v} = \hat{v}_1 \hat{\e}_1 + \hat{v}_2 \hat{\e}_2$, and thus the contravariant Piola transform maps the canonical reference basis to the tangent space spanned by $\{\bvec{t}_1,\bvec{t}_2\}$ and scales by $1/J$. 

When the mesh transformation $\T_e$ is not affine, the tangent basis is not orthogonal. In this case, the usual method of selecting components of a vector through the dot product (e.g.~$v_i = \bvec{t}_i \cdot \vec{v}$) is inappropriate since $\bvec{t}_i \cdot \bvec{t}_j \neq \delta_{ij}$. Instead, a dual basis, referred to as the cotangent basis, is constructed such that 
	\begin{equation} \label{eq:biorth}
		\bvec{n}_i \cdot \bvec{t}_j = \delta_{ij} \,. 
	\end{equation}
Vectors the satisfy Eq.~\ref{eq:biorth} are called bi-orthonormal. 
Since the $\bvec{t}_i$ are the columns of the Jacobian matrix, defining the cotangent basis as the rows of the inverse of the Jacobian matrix satisfies the bi-orthonormality condition since $\mat{F}^{-1}\mat{F} = \I$. In other words, the cotangent basis is defined such that 
	\begin{equation}
		\mat{F}^{-1} = \begin{bmatrix} 
			\bvec{n}_1^T \\ \bvec{n}_2^T 
		\end{bmatrix} \,. 
	\end{equation}
For a contravariant vector, the usual method of selecting a component is now replaced with $v_i = \bvec{n}_i \cdot \vec{v}$. 
The cotangent space is associated with vectors normal to the faces. By representing a vector on the tangent space, the contravariant Piola transform allows selection of the component representing the normal component through $\n\cdot\vec{v}$. 
Note that for non-affine meshes, $\mat{F}$ depends on $\vec{\xi}$ and thus the tangent and cotangent bases also depend on $\vec{\xi}$. 

\begin{figure}
	\centering
	\includegraphics[width=.65\textwidth]{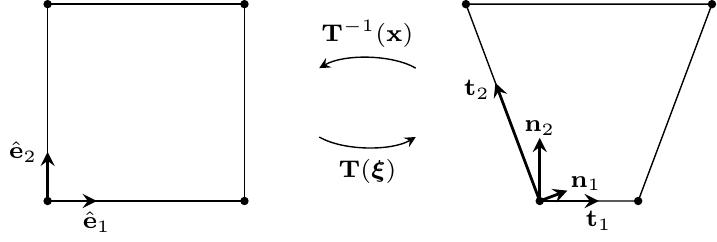}
	\caption{A depiction of the tangent and cotangent bases at the point $\vec{\xi} = (0,0)$ under a non-affine mesh transformation. }
	\label{fig:piola}
\end{figure}
Figure \ref{fig:piola} depicts an example non-affine mesh transformation and the tangent and cotangent bases evaluated at the point $\vec{\xi} = (0,0)$. Observe that the pairs $(\bvec{t}_1, \bvec{n}_2)$ and $(\bvec{t}_2, \bvec{n}_1)$ are perpendicular. The pairs $(\bvec{t}_1, \bvec{n}_1)$ and $(\bvec{t}_2, \bvec{n}_2)$ do not point in the same direction but their magnitudes and directions balance so that $\bvec{t}_i\cdot \bvec{n}_i = 1$. Thus, the bi-orthonormality condition $\bvec{n}_i\cdot\bvec{t}_j = \delta_{ij}$ is satisfied. In addition, the tangent vectors and cotangent vectors are tangential and normal, respectively, to one of the faces connecting at the point $\vec{\xi} = (0,0)$. 

For a contravariant vector, 
	\begin{equation} \label{eq:piola_ibp1}
		\int_K \nabla u \cdot \vec{v} \ud \x = \int_{\hat{K}} \mat{F}^{-T}\hnabla \hat{u} \cdot \frac{1}{J}\mat{F} \hat{v}\, J\!\ud\vec{\xi} = \int_{\hat{K}} \hnabla \hat{u} \cdot \hvec{v} \ud\vec{\xi} \,. 
	\end{equation}
The gradient transforms as 
	\begin{equation} \label{eq:piola_grad}
		\nabla\vec{v} = \nabla\paren{\frac{1}{J}\mat{F}\hat{v}\circ\mat{T}^{-1}} = \frac{1}{J}\mat{F}\!\paren{\hnabla\hvec{v} - \hat{\mat{B}}}\!\mat{F}^{-1} 
	\end{equation}
where 
	\begin{equation}
		\hat{\mat{B}} = \frac{1}{J}\hnabla\!\paren{J\mat{F}^{-1}}\!\mat{F}\hvec{v} \,. 
	\end{equation}
This result is derived by direct computation in Appendix \ref{app:grad_piola} along with the details required to implement this transformation using the machinery commonly provided in finite element libraries. It is also shown that $\hmat{B} = 0$ when the mesh transformation is affine and that $\tr(\hat{\mat{B}}) = 0$ for any transformation. This last result is known as the Piola identity \cite{ciarlet_elasticity}. Using the Piola identity, the linearity of the trace, and the invariance of the trace under similarity transformations, the divergence transforms as
	\begin{equation} \label{eq:piola_div}
		\nabla\cdot\vec{v} = \tr\paren{\nabla\vec{v}} = \frac{1}{J}\tr\!\paren{\mat{F}\!\paren{\hnabla\hvec{v} - \hat{\mat{B}}}\!\mat{F}^{-1}} = \frac{1}{J}\hnabla\cdot\hvec{v} \,. 
	\end{equation}
Thus, 
	\begin{equation} \label{eq:piola_ibp2}
		\int_K u\, \nabla\cdot\vec{v} \ud \x = \int_{\hat{K}} \hat{u}\, \hnabla\cdot\hvec{v} \ud\vec{\xi} \,. 
	\end{equation}
Combining the results from Eqs.~\ref{eq:piola_ibp1} and \ref{eq:piola_ibp2} yields: 
	\begin{equation}
		\int_{\partial K} u\,\vec{v}\cdot\n \ud s = \int_{\partial\hat{K}} \hat{u}\, \hvec{v}\cdot\hat{\n} \ud \hat{s} \,, 
	\end{equation}
where $\hat{\n}$ is the normal vector in reference space corresponding to the physical space normal $\n$. 
In other words, the contravariant Piola transformation preserves the normal component. 

In this document, integration is implicitly computed using numerical quadrature on the reference element. 
Integration over surfaces is performed over the one-dimensional reference element using the transformed element of length. 

\section{Finite Element Spaces} \label{sec:fes}
In this section, we define the finite element spaces used to approximate the VEF equations. These finite element spaces are defined on the mesh $\meshT$ or the interior skeleton of the mesh $\Gamma_0$ and consist of an element-local function space and a set of inter-element matching conditions. 
The inter-element matching conditions enforce various types of continuity of the solution between elements. The combination of a locally smooth function space and suitable matching conditions allows finite element spaces to be discrete subspaces of Sobolev spaces such as $L^2(\D)$, $H^1(\D)$, and $H(\div;\D)$. The following subsections define the element-local function space and matching conditions used for the scalar flux, current, and the interface variable used later in hybridization of the mixed finite element method. 

\subsection{Discontinuous Galerkin}
The Discontinuous Galerkin (DG) space is a discrete subspace of $L^2(\D)$, the space of square-integrable functions. In other words, if $u$ is an element of the DG space, 
	\begin{equation}
		\int u^2 \ud \x < \infty \,. 
	\end{equation}
Since only square integrability is required, functions in $L^2(\D)$, and thus DG spaces, do not need to be continuous. DG functions are represented using piecewise-discontinuous polynomials that are defined on the reference element and mapped to the physical element using the inverse mesh transformation $\T_e^{-1}: K_e \rightarrow \hat{K}$. In other words, on each element, the solution belongs to: 
	\begin{equation}
		\Qbb{p}(K_e) = \{ u = \hat{u} \circ \T_e^{-1} : \hat{u} \in \mathcal{Q}_p(\hat{K}) \}\,. 
	\end{equation}
The distinction between $\Qcal{p}$ and $\Qbb{p}(K_e)$ is important for non-affine mesh transformations. In such case, the inverse mesh transformation is generally non-polynomial so that the composition $u = \hat{u}\circ\T_e^{-1}$ is also non-polynomial. 

The degree-$p$ DG space is 
	\begin{equation}
		Y_p = \{ u \in L^2(\D) : u|_{K} \in \Qbb{p}(K) \,, \quad \forall K \in \meshT \} \,. 
	\end{equation}
An example of the distribution of the degrees of freedom in a linear DG space on a $3\times 3$ mesh is shown in Fig.~\ref{fig:dgfes}. Note that degrees of freedom are not shared between elements. Since there are no continuity requirements in the DG space, the basis for the local polynomials can use either open or closed points. That is, a nodal basis can be formed with Lagrange interpolating polynomials through the two-fold Cartesian product of either the closed Gauss-Lobatto points or the open Gauss-Legendre points. 
\begin{figure}
\centering
\includegraphics[width=.3\textwidth]{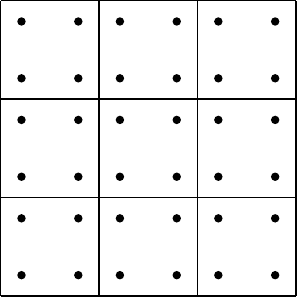}
\caption{A depiction of the distribution of degrees of freedom in the linear DG space. The Legendre nodes are used to illustrate that degrees of freedom are not shared between elements. }
\label{fig:dgfes}
\end{figure}

\subsection{$\Hone$}
Here, we define a discrete subspace of $H^1(\D)$, the space of functions in $L^2(\D)$ with square-integrable gradient. 
Let the degree-$p$, scalar continuous finite element space be
	\begin{equation}
		V_p = \{ u \in C_0(\D) : u|_K \in \Qbb{p}(K) \,, \quad \forall K \in\meshT \} 
	\end{equation}
so that each function $u\in V_p$ is a piecewise-continuous polynomial mapped from the reference element. Since $u \in V_p$ is locally smooth and $V_p \subset C_0(\D)$, it can be shown that $V_p \subset H^1(\D)$ and, in particular, that $u\in V_p$ satisfies $\nabla u = \nablah u \in [L^2(\D)]^2$ \cite[Prop.~3.2.1]{quateroni}. The distribution of degrees of freedom for the space $V_2$ is shown in Fig.~\ref{fig:h1fes}. Here, continuity is enforced by sharing degrees of freedom between elements. Due to this, a nodal basis using closed points, such as the Gauss-Lobatto points, must be used. 

The vector-valued analog 
	\begin{equation}
		W_p = \{ \vec{v} : v_1 \in V_p \ \mathrm{and} \ v_2 \in V_p \} 
	\end{equation}
uses the scalar continuous finite element space for each component. In this way, $\vec{v} \in W_p \subset \Hone$ and thus $\nabla\vec{v} = \nablah\vec{v} \in [L^2(\D)]^{2\times 2}$. Since each component is defined independently using the scalar space, vectors $\vec{v} \in W_p$ transform according to Eq.~\ref{eq:scalar_copies}. 

\begin{figure}
\centering
\includegraphics[width=.3\textwidth]{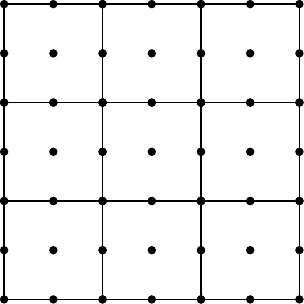}
\caption{A depiction of the distribution of degrees of freedom for the quadratic continuous finite element space. Continuity of members of the finite element space is enforced by sharing degrees of freedom across neighboring elements.}
\label{fig:h1fes}
\end{figure}

\subsection{Raviart Thomas} \label{sec:fes_rt}
The Raviart Thomas (RT) space is a discrete subspace of $H(\div;\D)$, the space of vector-valued functions with square-integrable divergence. That is, 
	\begin{equation}
		H(\div;\D) = \{ \vec{v} \in [L^2(\D)]^2 : \nabla\cdot\vec{v} \in L^2(\D) \} \,. 
	\end{equation}
The requirements of a discrete subspace are codified in the following proposition. 
\begin{prop}[Cf.~\citet{quateroni}, Proposition 3.2.2] \label{prop:div}
Let $\vec{v} : \D \rightarrow \R^2$ be such that 
\begin{enumerate}
	\item $\vec{v}|_K \in [H^1(K)]^2$ for each $K \in \meshT$  
	\item $\jump{\vec{v}\cdot\n} = 0$ for each $\mathcal{F} \in \Gamma_0$ 
\end{enumerate}
then $\vec{v} \in H(\div;\D)$. Conversely, if $\vec{v} \in H(\div;\D)$ and (a) is satisfied, then (b) holds. 
\end{prop}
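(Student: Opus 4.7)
The plan is to prove both directions by comparing the distributional divergence of $\vec{v}$ on all of $\D$ against the ``broken'' divergence $\nablah\cdot\vec{v}$ obtained by applying the divergence elementwise. Hypothesis (a) guarantees that $\nablah\cdot\vec{v}$ is well-defined in $L^2(\D)$ and that each $\vec{v}|_K$ has a well-defined trace $\vec{v}\cdot\n_K \in H^{1/2}(\partial K)$, so that the jump in (b) makes sense.

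For the forward direction, assume (a) and (b). Fix an arbitrary test function $\phi \in C_c^\infty(\D)$. The goal is to show that $\nablah\cdot\vec{v}$ is the distributional divergence of $\vec{v}$, which will place $\vec{v}$ in $H(\div;\D)$ since $\nablah\cdot\vec{v} \in L^2(\D)$ by hypothesis (a). Starting from the definition of the distributional divergence and splitting the integral over the mesh, I would apply the elementwise integration by parts formula, which is legitimate on each $K$ by (a). The bulk terms combine to $\int_\D (\nablah\cdot\vec{v})\,\phi\,d\x$. The boundary terms reassemble over $\Gamma$: interior faces pick up $\int_{\mathcal F}\jump{\vec{v}\cdot\n}\avg{\phi}\,ds + \int_{\mathcal F}\avg{\vec{v}\cdot\n}\jump{\phi}\,ds$, but $\phi$ is continuous (so $\jump{\phi}=0$) and (b) kills the remaining term; the boundary faces in $\Gamma_b$ vanish because $\phi$ has compact support in $\D$. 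Hence $\langle \nabla\cdot\vec{v},\phi\rangle = \int_\D(\nablah\cdot\vec{v})\phi\,d\x$ for every test $\phi$, so the distributional divergence equals $\nablah\cdot\vec{v} \in L^2(\D)$.

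For the converse, assume $\vec{v} \in H(\div;\D)$ and (a). Repeat the same elementwise integration by parts identity. The left-hand side equals $\int_\D(\nabla\cdot\vec{v})\phi\,d\x$ (using the true divergence in $L^2$), while the elementwise identity produces $\int_\D(\nablah\cdot\vec{v})\phi\,d\x + \sum_{\mathcal F\in\Gamma_0}\int_{\mathcal F}\jump{\vec{v}\cdot\n}\,\phi\,ds$ (again $\phi$ is continuous with compact support, killing the average-jump term and the $\Gamma_b$ contributions). A standard localization argument gives $\nabla\cdot\vec{v} = \nablah\cdot\vec{v}$ as $L^2$ functions when tested against $\phi \in C_c^\infty(K)$ on each element. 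Subtracting, I obtain
\begin{equation*}
    \sum_{\mathcal F\in\Gamma_0}\int_{\mathcal F}\jump{\vec{v}\cdot\n}\,\phi\,ds = 0 \quad \text{for all } \phi \in C_c^\infty(\D).
\end{equation*}
Localizing $\phi$ around a single interior face and varying its trace on that face (using the density of traces of $C_c^\infty(\D)$ functions in the appropriate trace space) forces $\jump{\vec{v}\cdot\n}=0$ on each $\mathcal F \in \Gamma_0$, which is (b).

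The main obstacle, and the point that requires care rather than brute calculation, is the trace machinery: verifying that the elementwise integration by parts identity holds in the right sense for $\vec{v}|_K \in [H^1(K)]^2$ (so that $\vec{v}\cdot\n_K$ lives in $H^{-1/2}(\partial K)$ at worst and can be paired with $\phi|_{\partial K}$), and then reassembling the boundary contributions consistently with the normal convention in Eq.~\ref{eq:jump_avg}. Once this is set up correctly, the algebraic identities collapse exactly as sketched, and the density argument in the converse is routine because test functions with arbitrarily prescribed trace on a single face can be constructed by partition of unity and extension from the face.
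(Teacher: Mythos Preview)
Your proposal is correct and follows essentially the same argument as the paper: start from the distributional divergence, integrate by parts elementwise using (a), collect the interface terms into jump integrals over $\Gamma_0$, and use continuity of the test function together with (b) (or its absence, in the converse) to control those terms. The paper is slightly terser---it does not expand the interface sum via the $\jump{ab}=\jump{a}\avg{b}+\avg{a}\jump{b}$ identity and leaves the final density/localization step in the converse implicit---but the strategy is identical.
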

\begin{proof}
It must be shown that, given (a) and (b), $\nabla\cdot\vec{v} \in L^2(\D)$. We proceed by leveraging the fact that $\nablah\cdot\vec{v} \in L^2(\D)$ (since $\vec{v}|_K \in [H^1(K)]^2$) and then show that $\nabla\cdot\vec{v} = \nablah\cdot\vec{v}$, proving the claim. 
Let $C^\infty_0(\D)$ be the space of infinitely differentiable functions that are zero on the boundary of the domain. 
% Using Green's identity, we have that for each $u \in H^1_0(\D)$: 
% 	\begin{equation}
% 	\begin{aligned}
% 		\int u\,\nablah\cdot\vec{v} \ud \x &= \sum_{K\in\meshT} \bracket{\int_{\partial K} u\,\vec{v}\cdot\n \ud s - \int_K \nabla u|_K \cdot \vec{v} \ud \x } \\
% 		&= \int_{\Gamma_0} \jump{u \vec{v}\cdot\n} \ud s - \int \nablah u \cdot \vec{v} \ud \x \,, 
% 	\end{aligned}
% 	\end{equation}
% where we have used that $u = 0$ for each $\x \in \partial\D$ in the surface integral. 
% Since $u \in H^1_0(\D)$, $\jump{u} = 0$ and $\nablah u = \nabla u$. Thus, using (b) and applying Green's formula again:
% 	\begin{equation}
% 	\begin{aligned}
% 		\int u\, \nablah\cdot\vec{v} \ud \x &= \int_{\Gamma_0} u\jump{\vec{v}\cdot\n} \ud s - \int \nabla u \cdot \vec{v} \ud \x \\
% 		&= \int u\,\nabla\cdot\vec{v} \ud \x \,, \quad \forall u \in H^1_0(\D) \,.  
% 	\end{aligned}
% 	\end{equation}
Using Green's identity, we have that for each $u \in C^\infty_0(\D)$: 
	\begin{equation}
	\begin{aligned}
		\int u\, \nabla\cdot\vec{v} \ud \x &= -\int \nabla u \cdot \vec{v} \ud \x \\
		&= -\sum_{K \in \meshT} \int_K \nabla u \cdot \vec{v}|_K \ud \x \,,
	\end{aligned}
	\end{equation}
where we have used that $u=0$ for each $\x\in\partial\D$. 
Since $\vec{v}|_K \in [H^1(K)]^2$ for each $K$, we can integrate by parts locally on each element to give: 
	\begin{equation}
	\begin{aligned}
		\int u\, \nabla\cdot\vec{v} \ud \x &= \sum_{K\in\meshT} \bracket{\int_K u\, \nabla\cdot\vec{v}|_K \ud \x - \int_{\partial K} u\,\vec{v}\cdot\n \ud s} \\
		&= \int u\,\nablah\cdot\vec{v} \ud \x - \int_{\Gamma_0} u\jump{\vec{v}\cdot\n} \ud s \\
		&= \int u\, \nablah\cdot\vec{v} \ud \x \,, \quad \forall u \in C^\infty_0(\D) \,,
	\end{aligned}
	\end{equation}
since $u\in C^\infty_0(\D)$ satisfies is continuous such that $\jump{u}=0$ and $\jump{\vec{v}\cdot\n} = 0$ from (b). 	
Therefore, $\nabla\cdot\vec{v} = \nablah\cdot\vec{v} \in L^2(\D)$. 

On the other hand, if $\vec{v} \in H(\div;\D)$ then $\nabla\cdot\vec{v} = \nablah\cdot\vec{v}$ and, given $\vec{v}|_K \in [H^1(K)]^2$, we obtain 
	\begin{equation}
		\int_{\Gamma_0} u \jump{\vec{v}\cdot\n} \ud s = 0 \,, \quad \forall u \in C^\infty_0(\D) \,, 
	\end{equation}
hence, (b) holds.
\end{proof}
Thus, a discrete subspace of $H(\div;\D)$ must (a) have a smooth function space on each element and (b) have suitable matching conditions so that the normal component is continuous across interior mesh interfaces. 
% Figure \ref{fig:quiver} depicts an example vector
% 	\begin{equation}
% 		\vec{v} = \begin{cases}
% 			\vector{1 & 0} \,, & \x \in K_1 \\ 
% 			\vector{1 & 0.5} \,, & \x \in K_2 
% 		\end{cases} \,. 
% 	\end{equation}
% Since $\vec{v}\cdot\n = 1$ when evaluated from both sides of the interface between elements $K_1$ and $K_2$, $\vec{v} \in H(\div;\D)$. Note that the tangential component is discontinuous. 
% \begin{figure}
% \centering
% \includegraphics[width=.5\textwidth]{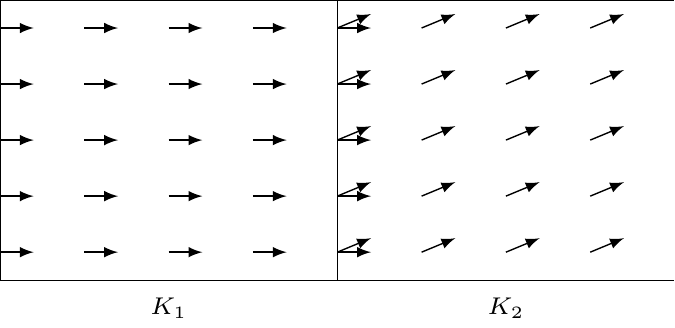}
% \caption{}
% \label{fig:quiver}
% \end{figure}

The RT space uses the local polynomial space $\Qcal{p+1,p} \times \Qcal{p,p+1}$. This choice can be motivated by the discrete de Rham complex \cite{mfem_brezzi} in that 
	\begin{equation}
		\Qcal{p+1} \xrightarrow{\hnabla\times} \Qcal{p+1,p}\times \Qcal{p,p+1} \xrightarrow{\hnabla\cdot} \Qcal{p} \,. 
	\end{equation}
As an example, the lowest-order polynomial space is
	\begin{equation}
		\Qcal{1,0} \times \Qcal{0,1} = \spn\bracet{\twovec{1}{0}\,, \twovec{\xi}{0} \,, \twovec{0}{1} \,, \twovec{0}{\eta}} \,, 
	\end{equation}
and thus we have that: 
	\begin{equation}
		\hat{\nabla}\cdot \Qcal{1,0} \times \Qcal{0,1} = \spn\bracet{1} = \Qcal{0} \,. 
	\end{equation}
The nodal basis for $\Qcal{p+1,p} \times \Qcal{p,p+1}$ uses the closed Gauss-Lobatto points in the normal direction and the open Gauss-Legendre points in the tangential direction. The interpolating points for the first three orders are shown in Fig.~\ref{fig:rt_local_poly}. The circles denote degrees of freedom corresponding to the $\xi$ component while squares denote the $\eta$ component. 
\begin{figure}
\centering
\begin{subfigure}{.25\textwidth}
	\centering
	\includegraphics[width=\textwidth]{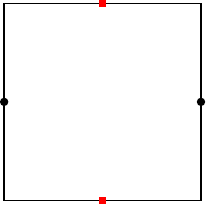}
	\caption{}
\end{subfigure}\qquad
\begin{subfigure}{.25\textwidth}
	\centering
	\includegraphics[width=\textwidth]{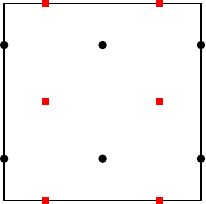}
	\caption{}
\end{subfigure}\qquad
\begin{subfigure}{.25\textwidth}
	\centering
	\includegraphics[width=\textwidth]{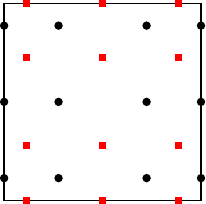}
	\caption{}
\end{subfigure}
\caption{The interpolating points used for the nodal basis of the space $\Q_{p+1,p}\times \Q_{p,p+1}$ for (a) $p=0$, (b) $p=1$, and (c) $p=2$. Gauss-Legendre points are used in the tangential direction and Gauss-Lobatto in the normal direction for each component of the vector. Circles denote the degrees of freedom associated with the $\xi$ component and squares the $\eta$ component. }
\label{fig:rt_local_poly}
\end{figure}

The contravariant Piola transformation is used to allow sharing the degrees of freedom associated with the normal component with neighboring elements.
Note that this transformation is still required even when continuity in the normal component is relaxed and enforced weakly with Lagrange multipliers as in the hybridization procedure discussed in \S \ref{sec:hyb}. 
This is due to the use of anisotropic polynomial interpolation (i.e.~different degree polynomials are used in each variable) which requires the vector's basis to be rotated along with the transformation to properly orient the interpolating polynomials in physical space. 
Combining the local function space $\Qcal{p+1,p}\times\Qcal{p,p+1}$ with the contravariant Piola transform yields: 
	\begin{equation}
		\mathbb{D}_p(K) = \{ \vec{v} = \frac{1}{J}\mat{F}\hat{\vec{v}}\circ\T^{-1} : \hat{\vec{v}} \in \Qcal{p+1,p} \times \Qcal{p,p+1} \} \,. 
	\end{equation}
Here, both the inverse mesh transformation and $1/J$ are generally non-polynomial when $\T$ is non-affine. 

We now define the degree-$p$ RT space as: 
	\begin{equation}
		% \RT_p = \{ \vec{v} \in H(\div;\D) : \vec{v}|_K \in \mathbb{D}_p(K) \,, \quad \forall K \in \meshT \} \,. 
		\RT_p = \{ \vec{v} \in [L^2(\D)]^2 : \vec{v}|_K \in \mathbb{D}_p(K) \,, \ \forall K \in \meshT \ \text{and} \ \jump{\vec{v}\cdot\n} = 0 \,, \ \forall \mathcal{F} \in \Gamma_0 \} \,. 
	\end{equation}
Note that since the contravariant Piola transform is used, functions in RT transform according to Eqs.~\ref{eq:piola}, \ref{eq:piola_grad}, and \ref{eq:piola_div}. 
The location of the degrees of freedom for $\RT_1$ are shown on a $3\times 3$ mesh in Fig.~\ref{fig:rtfes}. Continuity in the normal component is enforced by sharing the degrees of freedom corresponding to the normal component on interior faces. 
From Proposition \ref{prop:div}, $\vec{v} \in \RT_p$ satisfies $\nabla\cdot\vec{v} = \nablah\cdot\vec{v} \in L^2(\D)$. 
However, the RT space does not have the continuity to allow a square-integrable gradient. In other words, $\nabla\vec{v} \notin [L^2(\D)]^{2\times 2}$ and $\nabla\vec{v} \neq \nablah\vec{v}$. 

\begin{figure}
\centering 
\includegraphics[width=.3\textwidth]{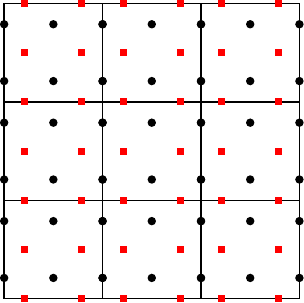}
\caption{The distribution of degrees of freedom corresponding to the first degree Raviart Thomas space. Continuity of the normal component is enforced by sharing the degrees of freedom corresponding to the normal component along the interior face between neighboring elements. The circles and squares denote degrees of freedom in the $x$ and $y$ directions, respectively. }
\label{fig:rtfes}
\end{figure}

\subsection{Raviart Thomas Trace Space}
The normal trace of the RT space is required for the hybridization procedure discussed in \S \ref{sec:hyb}. This space is defined on the interior skeleton of the mesh $\Gamma_0$ and represents the normal component of the RT space along the interior mesh faces. Let $\mathcal{P}_p$ be the space of univariate polynomials with degree at most $p$ and 
	\begin{equation}
		\mathbb{P}_p(\mathcal{F}) = \{ u = \hat{u} \circ \T^{-1} : \hat{u} \in \mathcal{P}_p(\hat{\mathcal{F}}) \}
	\end{equation}
the space of univariate polynomials mapped from the reference line, $\hat{\mathcal{F}} = [0,1]$. 
The RT trace space is then 
	\begin{equation}
		\Lambda_p = \{ \mu \in L^2(\Gamma_0) : \mu|_\mathcal{F} \in \mathbb{P}_p(\mathcal{F}) \,, \quad \forall \mathcal{F} \in \Gamma_0 \} \,. 
	\end{equation}
The degrees of freedom in $\Lambda_1$ are depicted in Fig.~\ref{fig:ifes}. Note that these degrees of freedom are exactly the degrees of freedom corresponding to the normal component of $\RT_1$ on the interior faces of the mesh. 
\begin{figure}
\centering
\includegraphics[width=.3\textwidth]{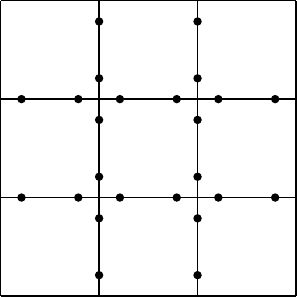}
\caption{The distribution of degrees of freedom corresponding to $\Lambda_1$, the space defined as the normal trace of the first degree Raviart Thomas space, on a $3\times 3$ mesh. }
\label{fig:ifes}
\end{figure}

\section{Transport Discretizations}
We assume the transport equation is discretized with the Discrete Ordinates (\Sn) angular model and an arbitrary-order DG spatial discretization compatible with curved meshes (e.g.~\cite{woods_thesis,graph_sweeps}). The transport equation is collocated at discrete angles $\Omegahat_d$ and integration over the unit sphere is numerically approximated with a suitable angular quadrature rule, $\{w_d,\Omegahat_d\}_{d=1}^{N_\Omega}$. Let $\psi_d(\x) = \psi(\x,\Omegahat_d)$ be the angular flux in the discrete direction $\Omegahat_d$. The DG discretization uses $\psi_d \in Y_p$ for each discrete angle. Through finite element interpolation, $\psi_d(\x)$ can be evaluated at any point in the mesh. The VEF data are computed with \Sn angular quadrature and finite element interpolation as: 
	\begin{subequations} \label{eq:vef_disc}
	\begin{equation}
		\E(\x) = \frac{\sum_{d=1}^{N_\Omega}w_d\, \Omegahat_d\otimes\Omegahat_d\, \psi_d(\x)}{\sum_{d=1}^{N_\Omega} w_d \psi_d(\x)} \,,
	\end{equation} 
	\begin{equation}
		E_b(\x) = \frac{\sum_{d=1}^{N_\Omega} w_d\,|\Omegahat_d\cdot\n| \,\psi_d(\x)}{\sum_{d=1}^{N_\Omega} w_d \psi_d(\x)} \,. 
	\end{equation}
	\end{subequations}
Note that we represent the VEF data as ratios of DG grid functions. That is, on each element $K$, each component of the Eddington tensor and the boundary factor can be written as $q/p$ where $q,p \in \Qbb{p}(K)$ and are thus improper rational polynomials mapped from the reference element. 
Note that $\Omegahat$ is defined on the canonical basis $\e_i$. Thus, each component of the Eddington tensor transforms independently as a scalar and the Piola transform is not required to map the Eddington tensor between reference and physical space. 

Since $\psi_d \in Y_p$ can be discontinuous across interior mesh interfaces, the VEF data can also be discontinuous. Thus, global derivatives of the VEF data are not well defined and, in particular, the Eddington tensor is not single-valued on interior mesh interfaces. The VEF discretizations presented here are designed to avoid the need for derivatives of the Eddington tensor and, when needed, use the average to provide a single-valued approximation of the Eddington tensor on interior mesh faces. 
The boundary factor is only needed on the boundary of the domain (i.e.~$\x\in\Gamma_b$) and is thus always single valued. 

We consider problems where $\psi \geq \delta$ in the domain for some $\delta > 0$. This assumption is reasonable for our applications but may not apply in shielding or deep penetration problems. Where necessary, negative flux fixups are used to ensure that $\psi > 0$ numerically since positive angular fluxes are crucial for generating physically realistic VEF data. 
Note that the Eddington tensor and boundary factor are angular flux-weighted averages of $\Omegahat\otimes \Omegahat$ and $|\Omegahat\cdot\n|$, respectively. Combined with the positivity assumption, this means that each component of the Eddington tensor and the boundary factor are bounded functions in space such that $\E \in [L^\infty(\D)]^{2\times 2}$ and $E_b \in L^\infty(\D)$. 

The VEF scalar flux is coupled to the transport equation in the scattering source. A mixed-space scattering mass matrix is used to support the use of differing finite element spaces for the angular flux and VEF scalar flux. That is, the scattering source is built using test functions and trial functions from the spaces corresponding to the angular and VEF scalar fluxes, respectively. 

\section{Mixed Finite Element Discretizations} \label{sec:mfem_disc}
We now derive mixed finite element discretizations of the VEF equations with Miften-Larsen boundary conditions. We seek approximations to the scalar flux and current in the finite-dimensional spaces $\mathcal{E}$ and $\mathcal{V}$, respectively, and test the zeroth and first moments with functions in the spaces $\mathcal{E}'$ and $\mathcal{V}'$, respectively. We consider Galerkin discretizations so that the test and trial spaces for the scalar flux and current are the same. In other words, we restrict ourselves to the case that $\mathcal{E}' = \mathcal{E}$ and $\mathcal{V}' = \mathcal{V}$. We proceed by first informally deriving the weak form assuming the spaces $\mathcal{E}$ and $\mathcal{V}$ have the requisite regularity to allow the resulting weak form to be well defined. We will see that there is no ambiguity in the choice $\mathcal{E} = Y_p \subset L^2(\D)$. However, due to the presence of the Eddington tensor, the standard Raviart Thomas methods are inappropriate and so two choices for $\mathcal{V}$ are presented: a method with $\mathcal{V} = W_{p+1} \subset \Hone$ and a non-conforming method where $\mathcal{V} = \RT_p\subset H(\div;\D)$. 

\subsection{Weak Form}
Multiplying the zeroth and first moments with sufficiently smooth functions $u$ and $\vec{v}$, respectively, and integrating over the domain yields: 
	\begin{subequations}
	\begin{equation}
		\int u\, \nabla\cdot\vec{J} \ud \x + \int \sigma_a\, u\varphi \ud \x = \int u\, Q_0 \ud \x \,,
	\end{equation}
	\begin{equation}
		\int \vec{v}\cdot\nabla\cdot\paren{\E\varphi} \ud \x + \int \sigma_t\,\vec{v}\cdot\vec{J} \ud \x = \int \vec{v}\cdot\vec{Q}_1 \ud \x \,. 
	\end{equation}
The differentiability requirements on the Eddington tensor and the VEF scalar flux can be reduced by integrating the first moment equation by parts: 
	\end{subequations}
	\begin{subequations} \label{eq:vef_weak}
	\begin{equation}
		\int u\, \nabla\cdot\vec{J} \ud \x + \int \sigma_a\, u\varphi \ud \x = \int u\, Q_0 \ud \x \,,
	\end{equation}
	\begin{equation}
		\int_{\partial \D} \vec{v}\cdot\E\n\, \bar{\varphi} \ud s - \int \nabla\vec{v} : \E \varphi \ud \x + \int \sigma_t\,\vec{v}\cdot\vec{J} \ud \x = \int \vec{v}\cdot\vec{Q}_1 \ud \x \,, 
	\end{equation}
	\end{subequations}
where $\varphi = \bar{\varphi}$ on the boundary of the domain. We have used Green's identity for a tensor multiplied by a vector: 
	\begin{equation}
		\int \nabla\cdot\paren{\vec{v}\cdot\P} \ud \x = \int \vec{v}\cdot\nabla\cdot\P \ud \x + \int \nabla\vec{v} : \P \ud \x = \oint \vec{v}\cdot\P\n \ud s \,, 
	\end{equation}
where 
	\begin{equation}
		\mat{A} : \mat{B} = \sum_{i=1}^2 \sum_{j=1}^2 A_{ij} B_{ij} \,, \quad \mat{A}, \mat{B} \in \R^{2\times 2} \,. 
	\end{equation}
Integrating by parts moves derivatives from the Eddington tensor and VEF scalar flux to the test function $\vec{v}$ allowing weaker requirements for both $\E$ and $\varphi$. In addition, we assume $\vec{J} \in \mathcal{V}$ has enough regularity to allow $\nabla\cdot\vec{J} \in L^2(\D)$ (i.e.~$\mathcal{V} \subset H(\div;\D)$) so that $\int u\, \nabla\cdot\vec{J} \ud \x$ is well defined. Thus, we can unambiguously take $u,\varphi \in \mathcal{E} \subset L^2(\D)$. 

However, the test function $\vec{v}$ now has increased regularity requirements. Namely, we must have $\nabla\vec{v} : \E \in L^2(\D)$ instead of the typical requirement that $\nabla\cdot\vec{v} = \nabla\vec{v} : \I \in L^2(\D)$. 
In the thick diffusion limit, $\E = \frac{1}{3}\I$ and this requirement reduces to $\nabla\vec{v}:\E = \frac{1}{3}\nabla\cdot\vec{v} \in L^2(\D)$. 
In this case, RT methods apply directly for both $\vec{v}$ and $\vec{J}$. However, for a general Eddington tensor, the RT space does not have the continuity requirements to allow the term $\int \nabla\vec{v} : \E\varphi \ud \x < \infty$. This requirement is investigated in the following proposition. 

\begin{prop} \label{prop:edd}
For a tensor $\mat{S} \in [L^\infty(\D)]^{2\times 2}$ satisfying $\nabla\cdot\mat{S} \in [L^2(\D)]^2$, let $\vec{v} : \D\rightarrow \R^2$ be such that 
\begin{enumerate}
	\item $\vec{v}|_K \in [H^1(K)]^2$ for each $K \in \meshT$
	\item $\jump{\vec{v}\cdot\mat{S}\n} = 0$ for each $\mathcal{F} \in \Gamma_0$
\end{enumerate}
then $\nabla\vec{v} : \mat{S} \in L^2(\D)$. Conversely, if $\nabla\vec{v} : \mat{S} \in L^2(\D)$ and (a) is satisfied, then (b) holds. 
\end{prop}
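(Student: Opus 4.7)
The plan is to mirror the proof of Proposition \ref{prop:div} as closely as possible. Given (a), the elementwise gradient $\nablah\vec{v}$ lies in $[L^2(\D)]^{2\times 2}$, and since $\mat{S} \in [L^\infty(\D)]^{2\times 2}$, the pointwise contraction $\nablah\vec{v}:\mat{S}$ lies in $L^2(\D)$. It therefore suffices to identify $\nabla\vec{v}:\mat{S}$ with $\nablah\vec{v}:\mat{S}$ in the distributional sense, at which point the claim follows automatically.

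The key ingredient is the tensor product-rule identity $\nabla\cdot(u\,\vec{v}\cdot\mat{S}) = \nabla u\cdot(\vec{v}\cdot\mat{S}) + u\,\vec{v}\cdot(\nabla\cdot\mat{S}) + u\,(\nablah\vec{v}:\mat{S})$, which, thanks to (a), $\mat{S}\in [L^\infty]^{2\times 2}$, and $\nabla\cdot\mat{S}\in [L^2]^2$, makes sense on each element $K$ as an $L^1$ equality. For any $u \in C^\infty_0(\D)$, I would integrate this identity over each $K \in \meshT$ and apply the divergence theorem. Summing over elements, the face integrals collapse into $\int_{\Gamma_0} u\jump{\vec{v}\cdot\mat{S}\n} \ud s$ on interior faces plus a contribution on $\partial\D$ that vanishes because $u=0$ there. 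Invoking assumption (b) kills the interior jump, leaving
\[
\int u\,(\nablah\vec{v}:\mat{S}) \ud\x = -\int \nabla u \cdot (\vec{v}\cdot\mat{S}) \ud \x - \int u\,\vec{v}\cdot(\nabla\cdot\mat{S}) \ud \x
\]
for every $u \in C^\infty_0(\D)$. The right-hand side is precisely the distributional meaning of $\int u\,(\nabla\vec{v}:\mat{S}) \ud\x$ once one moves one derivative off $\vec{v}$ via $\mat{S}$, so $\nabla\vec{v}:\mat{S} = \nablah\vec{v}:\mat{S} \in L^2(\D)$.

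For the converse, assuming $\nabla\vec{v}:\mat{S} \in L^2(\D)$ along with (a), the same element-by-element computation produces $\int_{\Gamma_0} u\jump{\vec{v}\cdot\mat{S}\n} \ud s = 0$ for every $u \in C^\infty_0(\D)$, and a standard density argument then forces $\jump{\vec{v}\cdot\mat{S}\n} = 0$ on each $\mathcal{F}\in \Gamma_0$, giving (b).

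The main obstacle is justifying the product-rule identity and the elementwise integration by parts in the low-regularity setting and, in particular, giving a precise meaning to the distributional object $\nabla\vec{v}:\mat{S}$ when $\mat{S}$ is only $L^\infty$. This is where the hypothesis $\nabla\cdot\mat{S}\in [L^2]^2$ is essential: it promotes $u\,\vec{v}\cdot(\nabla\cdot\mat{S})$ to an honest $L^1$ function and lets one transfer the derivative onto the regular factor $u\,\mat{S}$ rather than onto $\mat{S}$ itself. One also needs the normal traces $\vec{v}\cdot\mat{S}\n$ on element faces to be well defined (as elements of $H^{-1/2}(\partial K)$), which follows from $\vec{v}|_K \in [H^1(K)]^2$ together with the boundedness of $\mat{S}$; this is what makes the jump condition in (b) a meaningful hypothesis rather than an empty one.
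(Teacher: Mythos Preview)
Your proposal is correct and follows essentially the same route as the paper: test against $u\in C^\infty_0(\D)$, integrate by parts element by element, collect the boundary terms into $\int_{\Gamma_0} u\,\jump{\vec{v}\cdot\mat{S}\n}\,\ud s$, and invoke (b) to identify $\nabla\vec{v}:\mat{S}$ with $\nablah\vec{v}:\mat{S}\in L^2(\D)$. The only cosmetic difference is that you expand $\nabla\cdot(\mat{S}u)=u\,\nabla\cdot\mat{S}+\mat{S}\nabla u$ explicitly (which makes the role of the hypothesis $\nabla\cdot\mat{S}\in[L^2(\D)]^2$ more visible), whereas the paper keeps this bundled and applies Green's identity directly to the pair $(\vec{v},\mat{S}u)$.
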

\begin{proof}
% From (a), $\nablah \vec{v} : \mat{S} \in L^2(\D)$. 
Let $C_0^\infty(\D)$ denote the space of infinitely differentiable functions that are zero on the boundary of the domain. 
Using Green's identity, the following holds for each $u \in C^\infty_0(\D)$: 
	% \begin{equation} \label{eq:ten_green}
	% \begin{aligned}
	% 	\int \nablah\vec{v} : \mat{S}\, u\ud \x &= \sum_{K\in\meshT} \bracket{\int_{\partial K} \vec{v}\cdot\mat{S}\n\, u \ud s - \int_K \vec{v}\cdot\nabla\cdot(\mat{S}u)|_K \ud \x} \\
	% 	&= \int_{\Gamma_0} \jump{\vec{v}\cdot\mat{S}\n}u \ud s - \int \vec{v}\cdot\nablah\cdot(\mat{S}u) \ud \x \\
	% 	&= \int \nabla\vec{v} : \mat{S} \, u\ud \x \,, 
	% \end{aligned}
	% \end{equation}
	\begin{equation}
	\begin{aligned}
		\int \nabla\vec{v} : \mat{S} \, u \ud \x &= -\int \vec{v}\cdot\nabla\cdot\paren{\mat{S}u} \ud \x 
		= -\sum_{K\in\meshT} \int_K \vec{v}|_K \cdot \nabla \cdot \paren{\mat{S}u} \ud \x \\
		&= \sum_{K \in \meshT} \bracket{\int_K \nabla\vec{v}|_K : \mat{S}\,u \ud \x - \int_{\partial K} \vec{v}\cdot\mat{S}\n \, u \ud s } \\
		&= \int \nablah \vec{v} : \mat{S}\,u \ud \x - \int_{\Gamma_0} \jump{\vec{v}\cdot\mat{S}\n} u \ud s \\
		&= \int \nablah \vec{v} : \mat{S}\,u \ud \x \,, 
	\end{aligned}
	\end{equation}
where we have used (b) to cancel the integration over $\Gamma_0$. 
The above identifies $\nabla\vec{v} : \mat{S}$ with $\nablah\vec{v} : \mat{S}$. 
Given $\mat{S} \in [L^\infty(\D)]^{2\times 2}$ and (a), $\nablah\vec{v} : \mat{S} \in L^2(\D)$ and thus we have that $\nabla \vec{v} : \mat{S} = \nablah \vec{v} : \mat{S} \in L^2(\D)$. 

On the other hand, if $\nabla\vec{v} : \mat{S} \in L^2(\D)$, then $\nabla\vec{v}:\mat{S} = \nablah\vec{v}:\mat{S}$ and, given $\vec{v}|_K\in[H^1(K)]^2$, we obtain 
	\begin{equation}
		\int_{\Gamma_0} \jump{\vec{v}\cdot\mat{S}\n} u \ud s = 0 \,, \quad \forall u \in C_0^\infty(\D) \,,
	\end{equation}
hence, (b) holds. 
\end{proof}
% Proposition \ref{prop:edd} generalizes Proposition \ref{prop:div} in that Proposition \ref{prop:edd} reduces to Proposition \ref{prop:div} when $\mat{S} = \mat{I}$. 
Observe that Proposition \ref{prop:edd} reduces to Proposition \ref{prop:div} when $\mat{S} = \mat{I}$. 
% Applying this result to the VEF equations, we have that 
% 	\begin{equation} \label{eq:edd_cts_req}
% 		\int \nabla\vec{v} : \E\varphi \ud \x < \infty \iff \jump{\vec{v}\cdot\E\n} = 0 \,, \quad \forall \mathcal{F} \in \Gamma_0 \,. 
% 	\end{equation}
Due to the DG interpolation used to approximate the angular flux, the Eddington tensor does not satisfy $\nabla\cdot\E \in L^2(\D)$ and thus Proposition \ref{prop:edd} does not apply directly. 
However, we can consider approximating the Eddington tensor by projecting it onto a space that satisfies this requirement. In such case, Proposition \ref{prop:edd} implies that 
	\begin{equation} \label{eq:edd_cts_req}
		\nabla\vec{v} : \E = \nablah \vec{v} : \E \iff \jump{\vec{v} \cdot \E\n} = 0 \,, \quad \forall \mathcal{F} \in \Gamma_0 \,. 
	\end{equation}
Figure \ref{fig:Encomp} depicts an example of the Eddington tensor rotating and scaling the normal vector, altering the continuity requirement of the space. Note that since the Eddington tensor is symmetric positive definite, $\n\cdot\E\n > 0$ and thus $\theta \in (-\pi/2,\pi/2)$. In other words, the Eddington tensor cannot rotate the normal past a direction tangential to the face. 
This altered continuity requirement makes standard RT methods an inappropriate choice for the test function $\vec{v}$. 
\begin{figure}
\centering
\includegraphics[width=.5\textwidth]{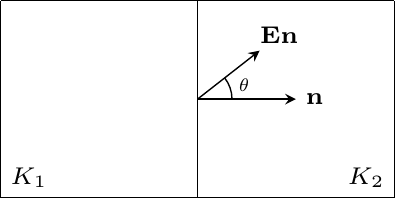}
\caption{A depiction of the rotation and scaling of the normal vector induced by the Eddington tensor. Since the Eddington tensor is symmetric positive definite, the angle $\theta$ cannot be larger than $\pm 90^\circ$. Due to the presence of the Eddington tensor in the VEF first moment equation, continuity of the $\E\n$ component is required. }
\label{fig:Encomp}
\end{figure}

In light of Eq.~\ref{eq:edd_cts_req}, the weak form in Eq.~\ref{eq:vef_weak} will hold only when the space $\mathcal{V}$ is chosen so that both $\jump{\vec{J}\cdot\n} = 0$ and $\jump{\vec{v}\cdot\E\n} = 0$ on all interior faces. These conditions can only be met by using $\vec{v}, \vec{J} \in \mathcal{V} \subset \Hone$ so that all components of $\vec{v}$ and $\vec{J}$ are continuous. A Petrov-Galerkin discretization where the test space satisfies $\jump{\vec{v}\cdot\E\n} = 0$ and the trial space satisfies $\jump{\vec{J}\cdot\n} = 0$ may be possible. In this case, the test space would need to use a more general Piola transform that preserves the $\E\n$ component of a vector, making the test space dependent on the angular flux. 
Furthermore, Proposition \ref{prop:edd} indicates this approach would require the use of an approximate projection of the Eddington tensor that satisfies $\nabla\cdot\E \in [L^2(\D)]^2$, which could degrade solution quality on problems with steep solution gradients in parts of the domain. 
The Petrov-Galerkin discretization is not considered here due to these complications. 
Alternatively, non-conforming, DG-like techniques can be used to allow use of the RT space for both the test and trial spaces. That is, both $\vec{v}, \vec{J} \in \mathcal{V}=\RT_p \subset H(\div;\D)$ and the discontinuity in $\vec{v}\cdot\E\n$ is handled with numerical fluxes. 

\subsection{$\Hone$}
Setting $\vec{v},\vec{J} \in \mathcal{V} \subset \Hone$ and $u,\varphi \in \mathcal{E} \subset L^2(\D)$ allows the weak form in Eq.~\ref{eq:vef_weak} to hold. The inf-sup  condition \cite{mfem_brezzi} states that the discretization arising from the pairing of equal degree interpolation for the scalar flux and current will be singular. That is, the $Y_p\times W_p$ discretization does not have a unique solution. The smallest non-singular pairing of spaces is then $Y_p \times W_{p+1}$. In other words, if the scalar flux is piecewise-constant, continuous linear finite elements for each component of the current must be used. 
Background on the discrete inf-sup condition is provided in Appendix \ref{app:infsup_solvability} in the context of the Poisson equation. 

The discretization is complete by supplying boundary conditions. Solving the Miften-Larsen boundary conditions (Eq.~\ref{eq:mlbc}) for $\varphi$ yields 
	\begin{equation}
		\bar{\varphi} = \frac{1}{E_b}\!\paren{\vec{J}\cdot\n - 2\Jin} \,. 
	\end{equation}
The $\Hone\times L^2(\D)$ mixed finite element VEF discretization is then: find $(\varphi,\vec{J}) \in Y_p\times W_{p+1}$ such that 
	\begin{subequations}
	\begin{equation}
		\int u\,\nabla\cdot\vec{J} \ud \x + \int \sigma_a\, u\varphi\ud\x = \int u\, Q_0 \ud \x \,, \quad \forall u \in Y_p \,, 
	\end{equation}
	\begin{multline}
		-\int \nabla\vec{v} : \E\varphi \ud \x + \int \sigma_t\, \vec{v}\cdot\vec{J} \ud \x + \int_{\Gamma_b} \frac{1}{E_b}\!\paren{\vec{v}\cdot\E\n}\!\paren{\vec{J}\cdot\n} \ud s \\= \int \vec{v}\cdot\vec{Q}_1 \ud \x + 2\int_{\Gamma_b} \frac{1}{E_b}\vec{v}\cdot\E\n\, \Jin \ud s \,, \quad \forall \vec{v} \in W_{p+1} \,. 
	\end{multline}
	\end{subequations}
Equation \ref{eq:scalar_copies_grad} is used to compute the gradient and divergence of $\vec{v}\,,\vec{J} \in W_{p+1}$ in reference space. 

Using $\mathcal{V} \subset \Hone$ is simple to implement in that it relies only on the scalar continuous finite element space and does not require interior face bilinear forms. 
However, this choice has been seen to degrade both solution quality and solver performance due to allowing non-physical, spurious modes. These so-called checkerboard modes are a well-known issue with $\Hone\times L^2(\D)$ discretizations in the context of fluid flow \cite{elman2014finite} and are a consequence of the mismatch between the spaces $\nabla\cdot W_{p+1}$ and $Y_p$. The space $\mathcal{V} \subset \Hone$ is either too small with respect to $Y_p$, leading to a singular system in the case $\mathcal{V} = W_p$ or too large, allowing spurious modes for $\mathcal{V} = W_{p+1}$. The effect of these modes on solution quality and solver performance is investigated in \S \ref{sec:badmodes} in the context of radiation diffusion. Furthermore, Appendix \ref{app:infsup_spurious} investigates these modes analytically on a lowest-order, single-element Poisson problem. 

\subsection{Raviart Thomas}
If $\vec{v},\vec{J} \in \mathcal{V} \subset H(\div;\D)$, a non-conforming approach must be used for the first moment equation due to the presence of the Eddington tensor. We proceed by locally integrating the first moment equation by parts on each element so that the global gradient of $\vec{v} \in H(\div;\D)$ is avoided. This requires the introduction of an auxiliary equation, referred to as the numerical flux, that approximates the product of the Eddington tensor and VEF scaluar flux on interior mesh interfaces. 
The local weak form for the first moment corresponding to each element $K$ is: 
	\begin{equation} \label{eq:weak_first_rt}
		\int_{\partial K} \vec{v}\cdot \widehat{\E\varphi}\n \ud s - \int_K \nabla\vec{v}|_K : \E\varphi \ud \x + \int_K \sigma_t\, \vec{v}\cdot\vec{J} \ud \x = \int_K \vec{v} \cdot \vec{Q}_1 \ud \x \,, \quad \forall \vec{v} \in \mathbb{D}_p(K) \,, 
	\end{equation}
where $\widehat{\E\varphi}$ is the aformentioned numerical flux for the Eddington tensor and scalar flux. 
Summing over all elements $K\in\meshT$: 
	\begin{equation}
		\int_\Gamma \jump{\vec{v}}\cdot\widehat{\E\varphi}\n \ud s - \int \nablah\vec{v} : \E\varphi \ud \x + \int \sigma_t\, \vec{v}\cdot\vec{J} \ud \x = \int \vec{v}\cdot\vec{Q}_1 \ud \x \,, \quad \forall \vec{v} \in \RT_p \,. 
	\end{equation}
We have used the fact that on a face $\mathcal{F} = K_1 \cap K_2$, $\n = \n_1 = -\n_2$ and the definitions of the jump and broken gradient in Eqs.~\ref{eq:jump_avg} and \ref{eq:broken_grad}, respectively. 
In addition, we assume the use of a so-called conservative numerical flux such that $\widehat{\E\varphi}\n$ is single-valued on interior faces. In other words, the numerical flux satisfies 
	\begin{equation}
		\jump{\widehat{\E\varphi}\n} = 0 \,, \quad \avg{\widehat{\E\varphi}\n} = \widehat{\E\varphi}\n \,, \quad \text{on}\ \mathcal{F} \in \Gamma_0 \,, 
	\end{equation}
where the average is defined in Eq.~\ref{eq:jump_avg}.
In this form, only the gradient restricted to each element is required of the test function, $\vec{v}$. 
Since $\vec{v}\in\RT_p$ satisfies $\vec{v}|_K \in \mathbb{D}(K) \subset \Hone$ on each $K \in \meshT$, the broken gradient $\nablah\vec{v} \in [L^2(\D)]^{2\times2}$ is well defined.  

We now define the numerical flux and boundary conditions. The resulting discretization will provide optimal accuracy if $\widehat{\E\varphi}\n$ is an optimal approximation of the true value of $\E\varphi\n$ on interior faces. 
A conservative numerical flux that satisfies this requirement is: 
	\begin{equation}
		\widehat{\E\varphi}\n = \avg{\E\n}\!\avg{\varphi} \,, \quad \mathrm{on} \ \mathcal{F} \in \Gamma_0 \,.
	\end{equation}
While many choices of the numerical flux are possible, we show below that this particular choice of numerical flux has the benefit of limiting to a standard RT discretization of radiation diffusion in the thick diffusion limit. 
The Miften-Larsen boundary conditions are applied with 
	\begin{equation} \label{eq:rt_mlbc}
		\widehat{\E\varphi}\n = \frac{\E\n}{E_b}\!\paren{\vec{J}\cdot\n - 2\Jin} \,, \quad \mathrm{on} \ \mathcal{F} \in \Gamma_b \,. 
	\end{equation}
This is derived by solving Eq.~\ref{eq:mlbc} for the scalar flux and multiplying by $\E\n$. The $Y_p \times \RT_p$ discretization is then: find $(\varphi,\vec{J}) \in Y_p \times \RT_p$ such that 
	\begin{subequations}
	\begin{equation}
		\int u\, \nabla\cdot\vec{J} \ud \x + \int \sigma_a\, u\varphi \ud \x = \int u\, Q_0 \ud \x \,, \quad \forall u \in Y_p \,,
	\end{equation}
	\begin{multline}
		\int_{\Gamma_0} \jump{\vec{v}\cdot\avg{\E\n}} \avg{\varphi} \ud s - \int \nablah \vec{v} : \E\varphi \ud \x + \int \sigma_t\, \vec{v}\cdot\vec{J} \ud \x + \int_{\Gamma_b} \frac{1}{E_b}\!\paren{\vec{v}\cdot\E\n}\!\paren{\vec{J}\cdot\n} \ud s \\= \int \vec{v}\cdot\vec{Q}_1 \ud \x + 2\int_{\Gamma_b} \frac{1}{E_b}\vec{v}\cdot\E\n\, \Jin \ud s \,, \quad \forall \vec{v} \in \RT_p \,. 
	\end{multline}
	\end{subequations}
Since RT vectors use the contravariant Piola transform, we substitute $\vec{v} = \frac{1}{J}\mat{F}\hvec{v}$ in all terms involving $\vec{v}$ and use Eqs.~\ref{eq:piola_grad} and \ref{eq:piola_div} to evaluate $\nablah\vec{v}$ and $\nabla\cdot\vec{J}$, respectively. 

In the thick diffusion limit, $\E = \frac{1}{3}\I$ and 
	\begin{equation}
		\jump{\vec{v}\cdot\avg{\E\n}} = \frac{1}{3}\jump{\vec{v}\cdot\n} = 0 \,, 
	\end{equation}
since $\vec{v} \in \RT_p$ has a continuous normal component. Furthermore, $\nablah\vec{v}:\E = \frac{1}{3}\nabla\cdot\vec{v}$. This discretization with this choice of numerical flux is then equivalent to the standard RT discretization of diffusion in the thick diffusion limit. 

The RT space satisfies $\nabla\cdot\RT_p = Y_p$ avoiding the spurious modes seen for the $\Hone \times L^2(\D)$ discretization. This allows superior solution quality and excellent solver performance. However, the RT method is more complex due to the need for interior face bilinear forms, the contravariant Piola transform, and the comparatively less simple RT space. 

\subsection{Solvers} \label{sec:solvers}
The above discretizations admit the following block system
	\begin{equation}
		\begin{bmatrix} 
			\mat{A} & \mat{G} \\ \mat{D} &\mat{M}_a 
		\end{bmatrix}
		\begin{bmatrix} 
			\fevec{J} \\ \fevec{\varphi} 
		\end{bmatrix}
		= \begin{bmatrix} 
			\fevec{g} \\ \fevec{f} 
		\end{bmatrix} \,,
	\end{equation}
where for $u,\varphi \in \mathcal{E}$ and $\vec{v},\vec{J} \in \mathcal{V}$: 
	\begin{subequations}
	\begin{equation} \label{eq:A}
		\fevec{v}^T \mat{A} \fevec{J} = \int \sigma_t\, \vec{v}\cdot\vec{J} \ud \x + \int_{\Gamma_b} \frac{1}{E_b}\!\paren{\vec{v}\cdot\E\n}\!\paren{\vec{J}\cdot\n} \ud s \,, 
	\end{equation}
	\begin{equation} \label{eq:Ma}
		\fevec{u}^T\mat{M}_a \fevec{\varphi} = \int \sigma_a\, u\varphi \ud \x \,,
	\end{equation}
	\begin{equation} \label{eq:D}
		\fevec{u}^T\mat{D}\fevec{J} = \int u\, \nabla\cdot\vec{J} \ud \x \,,
	\end{equation}
	\begin{equation} \label{eq:G}
		\fevec{v}^T \mat{G} \fevec{\varphi} = \begin{cases}
			-\int \nabla\vec{v} : \E\varphi \ud \x \,, & \mathcal{V} = W_{p+1} \\ 
			\int_{\Gamma_0} \jump{\vec{v}\cdot\avg{\E\n}}\!\avg{\varphi} \ud s - \int \nablah\vec{v} : \E \varphi \ud \x \,, & \mathcal{V} = \RT_p 
		\end{cases} \,, 
	\end{equation} 
	\begin{equation} \label{eq:g}
		\fevec{v}^T\fevec{g} = \int \vec{v}\cdot\vec{Q}_1 \ud \x + 2\int_{\Gamma_b}\frac{1}{E_b}\!\vec{v}\cdot\E\n\, \Jin \ud s \,, 
	\end{equation}
	\begin{equation} \label{eq:f}
		\fevec{u}^T \fevec{f} = \int u\,Q_0 \ud \x \,. 
	\end{equation}
	\end{subequations}
Note that the integration transformations described in \S \ref{sec:int_trans} are implicitly used and, in particular, the contravariant Piola transform is implicitly used when $\mathcal{V} = \RT_p$. 

We use a lower block triangular preconditioner of the form 
	\begin{equation} \label{eq:block_prec}
		\mat{M} = \begin{bmatrix} 
			\mat{A} \\ \mat{D} & \tmat{S}
		\end{bmatrix} \,,
	\end{equation}
where $\tmat{S}$ is an approximation to the Schur complement $\mat{S} = \mat{M}_a - \mat{D}\mat{A}^{-1}\mat{G}$. Block preconditioners seek to modify the system such that it has a minimal polynomial with small degree \cite{benzi_golub_liesen_2005}. Iterative solvers with an optimality condition, such as GMRES, can then converge in a small number of iterations. However, computing the generally dense Schur complement and exactly inverting it are impractical. Instead, we use an approximate Schur complement formed from a sparse approximation to $\mat{A}^{-1}$ and sparse matrix multiplication. That is, we use
	\begin{equation} \label{eq:lumped_schur}
		\tmat{S} = \mat{M}_a - \mat{D}\tmat{A}^{-1}\mat{G} 
	\end{equation}
where $\tmat{A}$ is the lumped mass matrix and boundary term. On elements with no boundary faces (i.e.~$\partial K \cap \Gamma_b = \emptyset$), the lumping procedure is to sum the rows of the matrix into the diagonal. This is computed on the element-local matrix as: 
	\begin{equation}
		\tilde{A}^e_{ij} = \begin{cases}
			\sum_{k} A_{ik}^e \,, & i=j \\ 
			0 \,, & i\neq j 
		\end{cases} \,, 
	\end{equation}
where $\mat{A}^e$ and $\tmat{A}^e$ are the unlumped and lumped matrices associated with the degrees of freedom corresponding to element $K_e$, respectively. 
On elements with a boundary face, the boundary integral over $\Gamma_b$ contributes. Due to the Eddington tensor, $\vec{v}\cdot\E\n$ couples degrees of freedom corresponding to the normal and tangential components of $\vec{v}$. We leverage the block structure of the local matrices to lump the boundary elements. Let 
	\begin{equation}
		\mat{A}^e = \begin{bmatrix} 
			\mat{A}^e_{11} & \mat{A}^e_{12} \\ 
			\mat{A}^e_{21} & \mat{A}^e_{22} 
		\end{bmatrix}
	\end{equation}
where $\mat{A}^e_{ij}$ is the sub-block corresponding to the degrees of freedom of the $i^{th}$ and $j^{th}$ components of the test and trial functions, respectively. We then lump each of these sub-blocks separately so that: 
	\begin{equation} \label{eq:bdr_lump}
		\tmat{A}^e = \begin{bmatrix} 
			\tmat{A}^e_{11} & \tmat{A}^e_{12} \\ 
			\tmat{A}^e_{21} & \tmat{A}^e_{22} 			
		\end{bmatrix} \,. 
	\end{equation}
The lumped local matrix $\tmat{A}^e$ is diagonal by vector component. That is, each row has at most two entries corresponding to the two components of a vector in $\R^2$. 
This lumping procedure allows approximation of the boundary terms and has an inverse that can be computed efficiently without fill-in. 

For both interior and boundary elements, the local matrices $\tmat{A}^e$ are assembled into the global matrix $\tmat{A}$. For rows corresponding to interior degrees of freedom, the lumped matrix is diagonal and thus the inverse is simply $1/\tilde{A}_{ii}$. For rows corresponding to boundary degrees of freedom, $\tmat{A}$ is a diagonal matrix for each vector component. The inverse is computed by gathering the entries corresponding to each vector component into a $2\times 2$ matrix, inverting it, and scattering the inverse back to a sparse matrix representing $\tmat{A}^{-1}$. 
The above lumping procedure results in a sparse $\tmat{A}^{-1}$ that approximates the true inverse $\mat{A}^{-1}$. 
Finally, the lumped Schur complement is formed with sparse matrix multiplication according to Eq.~\ref{eq:lumped_schur}. Note that computing the Schur complement is numerically analogous to eliminating the current in the analytic equations to form a second-order, elliptic partial differential equation. Thus, one Algebraic Multigrid (AMG) V-cycle is expected to be a spectrally equivalent approximation to $\tmat{S}^{-1}$. 

The approximate inverse of the block preconditioner in Eq.~\ref{eq:block_prec} is applied with forward substitution. In other words, we solve
	\begin{equation}
		\begin{bmatrix} 
			\mat{A} \\ \mat{D} & \tmat{S}
		\end{bmatrix}
		\begin{bmatrix} 
			x_1 \\ x_2 
		\end{bmatrix}
		= \begin{bmatrix} 
			r_1 \\ r_2 
		\end{bmatrix}
	\end{equation}
by approximately solving the block problems: 
	\begin{subequations} \label{eq:subproblems}
	\begin{equation}
		\mat{A} x_1 = r_1 \,,
	\end{equation}
	\begin{equation}
		\tmat{S}x_2 = r_2 - \mat{D} x_1 \,. 
	\end{equation}
	\end{subequations}
We stress that the sub-problems in Eqs.~\ref{eq:subproblems} do not need to be solved exactly. 
In fact, one iteration of Jacobi smoothing and one AMG V-cycle applied to $\mat{A}$ and $\tmat{S}$, respectively, often lead to a scalable preconditioner. 
More accurately solving the sub-problems (e.g.~using more than one Jacobi/AMG iteration or nested iteration) generally improves robustness to problem size but typically not to the extent that fewer Jacobi iterations and AMG V-cycles are performed. 
This behavior is investigated in \S\ref{sec:weak} where we compare the scaling of solving the RT VEF system using one and three AMG V-cycles per preconditioner application as well as the effect of using one iteration of Gauss-Seidel, a more expensive and thus more robust smoother, to approximate $\mat{A}^{-1}$. 

\section{Hybridization} \label{sec:hyb}
A hybridized version of the RT mixed method is obtained by relaxing the continuity requirements of the space $\RT_p$ and reimposing them weakly. Removing the continuity requirement from $\RT_p$ yields the broken space 
	\begin{equation}
		\hRT_p = \{ \vec{v} \in [L^2(\D)]^2 : \vec{v}|_{K} \in \mathbb{D}_k(K) \,, \quad \forall K \in \meshT \} \,. 
	\end{equation}
This space is equivalent to $\RT_p$ on each element but $\hRT_p$ does not have the matching conditions that strongly enforce continuity in the normal component. Note that $\RT_p \subset \hRT_p$ and that $\vec{v} \in \hRT_p$ belongs to $\RT_p$ if and only if $\jump{\vec{v}\cdot\n} = 0$ on all interior mesh interfaces. In other words, the mixed problem can be reformulated to use the space $\hRT_p$ instead of $\RT_p$ by adding the constraint that $\jump{\vec{J}\cdot\n} = 0$ for each $\mathcal{F} \in \Gamma_0$. The methods presented in this section enforce this constraint with a Lagrange multiplier. 

Hybridized methods are attractive for three reasons. First, since $\vec{J}\in\hRT_p$ and $\varphi \in Y_p$ are both discontinuous, their degrees of freedom are coupled only locally on each element. It is then possible to locally eliminate the scalar flux and current arriving at a system of equations for just the Lagrange multiplier. This reduced system is much smaller than the original $2\times 2$ system. Second, the reduced system for the Lagrange multiplier will be positive definite and AMG can be applied directly, avoiding the need for block preconditioners. Finally, the Lagrange multiplier provides an additional approximation for the scalar flux not provided by the original mixed problem. 

Since the VEF equations are not symmetric, the variational principles typically used to derive hybridized mixed finite element methods are not appropriate. We first show the derivation of a hybridized method for the symmetric case of radiation diffusion using variational principles. This method is extended to the VEF equations by emulating the properties of the symmetric case. Finally, we discuss the details of an efficient implementation. 

\subsection{Derivation for Radiation Diffusion}
In this section, we provide background on the dual, mixed, and hybrid variational forms associated with the symmetric radiation diffusion system with Dirichlet boundary conditions: 
	\begin{subequations} \label{eq:raddiff}
	\begin{equation} \label{eq:particle_balance}
		\nabla\cdot\vec{J} + \sigma_a \varphi = Q_0 \,, \quad \x \in \D \,,
	\end{equation}
	\begin{equation} \label{eq:raddiff_first}
		\nabla\varphi + 3\sigma_t \vec{J} = 0 \,, \quad \x \in \D\,,
	\end{equation}
	\begin{equation}
		\varphi = 0 \,, \quad \x \in \partial \D \,,
	\end{equation}
	\end{subequations}
where the source has been assumed to be isotropic. This coupled system can be viewed as the first moment equation (Eq.~\ref{eq:raddiff_first}) with the constraint of particle balance (Eq.~\ref{eq:particle_balance}). 
The RT mixed finite element discretization of radiation diffusion is: find $(\varphi,\vec{J}) \in Y_p\times \RT_p$ such that  
	\begin{subequations} \label{eq:mixed_diff}
	\begin{equation} 
		\int 3\sigma_t\,\vec{v}\cdot\vec{J}\ud \x - \int \nabla\cdot\vec{v}\,\varphi \ud \x = 0 \,, \quad \forall \vec{v} \in \RT_p \,,
	\end{equation}
	\begin{equation}
		\int u\,\nabla\cdot\vec{J} \ud \x + \int \sigma_a\, u \varphi \ud \x = \int u\, Q_0 \ud \x \,, \quad \forall u \in Y_p \,. 
	\end{equation}
	\end{subequations}
Our goal is to identify the variational problem associated with this mixed finite element discretization, modify it to support the use of the broken RT space, $\hRT_p$, and derive the hybridized system of equations that solves this modified variational problem. 
We follow \citet[Chapter 7]{quateroni} in the presentation of these topics. 

The dual formulation is to minimize the so-called complementary energy functional: 
	\begin{equation}
		I(\vec{J}) = \frac{1}{2}\int 3\sigma_t\, \vec{J}\cdot\vec{J} \ud \x 
	\end{equation}
under the constraint of particle balance. \citet[Theorem 7.1.1]{quateroni} shows that this constrained minimization problem and Eqs.~\ref{eq:raddiff} are equivalent formulations. Mixed methods enforce the particle balance constraint using a Lagrange multiplier, $\varphi$. Let the Lagrangian functional be $\mathcal{L}: \RT_p \times Y_p \rightarrow \R$ such that 
	\begin{equation}
		\mathcal{L}(\vec{J},\varphi) = I(\vec{J}) - \paren{\int \varphi\,\nabla\cdot\vec{J} \ud \x + \frac{1}{2}\int \sigma_a\,\varphi^2 \ud \x - \int \varphi\,Q \ud \x} \,. 
	\end{equation}
By minimizing over $\vec{J}$ and maximizing over $\varphi$, we can minimize $I(\vec{J})$ while enforcing particle balance. To see this, observe that, for $\vec{J}$ fixed, 
	\begin{equation}
		- \paren{\int \varphi\,\nabla\cdot\vec{J} \ud \x + \frac{1}{2}\int \sigma_a\,\varphi^2 \ud \x - \int \varphi\,Q \ud \x}
	\end{equation}
is a concave quadratic functional with respect to $\varphi$ and is maximized when particle balance occurs such that $\nabla\cdot\vec{J} + \sigma_a\varphi = Q$. The resulting mixed variational problem is then written: 
	\begin{equation}
		\inf_{\vec{J}\in \RT_p} \sup_{\varphi \in Y_p} \mathcal{L}(\vec{J},\varphi) \,. 
	\end{equation}
Such a problem finds the ``saddle point'' that balances the convex functional $I(\vec{J})$ with the concave particle balance constraint. Since $I(\vec{J})$ and the particle balance constraint are quadratic functionals, the saddle point occurs at $\nabla\mathcal{L} = 0$: 
	\begin{subequations}
	\begin{equation}
		\pderiv{\mathcal{L}}{\vec{J}} = \int 3\sigma_t\, \vec{v}\cdot\vec{J} \ud \x - \int \nabla\cdot\vec{v}\, \varphi \ud \x = 0 \,, \quad \forall \vec{v} \in \RT_p \,, 
	\end{equation}
	\begin{equation}
		\pderiv{\mathcal{L}}{\varphi} = -\int u\, \nabla\cdot\vec{J} \ud \x - \int \sigma_a\, u \varphi \ud \x + \int u\, Q_0 \ud \x = 0 \,, \quad \forall u \in Y_p \,. 
	\end{equation}
	\end{subequations}
Observe that this system of equations for the saddle point exactly matches the mixed discretization in Eq.~\ref{eq:mixed_diff}. 
Thus, the solution of the mixed discretization is also the saddle point of $\mathcal{L}$. 

We now wish to modify $\mathcal{L}$ to define a variational problem with an equivalent solution that allows use of the broken RT space $\hRT_p$. This is accomplished by searching for $\vec{J} \in \hRT_p$ and adding an additional constraint that the normal component of the current is continuous such that $\jump{\vec{J}\cdot\n} = 0$. 
Let $\hat{\mathcal{L}} : \hRT_p \times Y_p \rightarrow \R$ such that 
	\begin{equation}
		\hat{\mathcal{L}}(\vec{J},\varphi) = I(\vec{J}) - \paren{\int \varphi\, \nablah\cdot\vec{J} \ud \x + \frac{1}{2}\int \sigma_a\, \varphi^2 \ud \x - \int \varphi\,Q_0\,\ud \x} \,, 
	\end{equation}
be the broken Lagrangian functional. Since $\hRT_p$ and $Y_p$ are piecewise discontinuous, $\hat{\mathcal{L}}$ is equivalent to $\mathcal{L}$ on each element due to the use of the broken divergence. 
The constrained variational problem is then:
	\begin{equation}
		\inf_{\vec{J} \in \hRT_p} \sup_{\varphi \in Y_p} \hat{\mathcal{L}}(\vec{J},\varphi) \,, \quad \text{such that} \ \jump{\vec{J}\cdot\n} = 0 \,. 
	\end{equation}
As with particle balance, the normal continuity constraint is imposed with a Lagrange multiplier. Defining $\mathcal{H} : \hRT_p \times Y_p \times \Lambda_p$ as 
	\begin{equation}
		\mathcal{H}(\vec{J},\varphi,\lambda) = \hat{\mathcal{L}}(\vec{J}, \varphi,\lambda) + \int_{\Gamma_0} \lambda\, \jump{\vec{J}\cdot\n} \ud s \,,
	\end{equation}
the constrained saddle point problem is equivalent to: 
	\begin{equation} \label{eq:hybrid_saddle}
		\inf_{\vec{J}\in\hRT_p} \sup_{\varphi \in Y_p} \sup_{\lambda \in \Lambda_p} \mathcal{H}(\vec{J},\varphi,\lambda) \,. 
	\end{equation}
If $(\vec{J},\varphi,\lambda)$ is a solution of Eq.~\ref{eq:hybrid_saddle}, then $\jump{\vec{J}\cdot\n} = 0$, otherwise setting $\lambda = \lambda_\tau = \tau\jump{\vec{J}\cdot\n}$ for some $\tau > 0$ would give 
	\begin{equation}
		\int_{\Gamma_0} \lambda_\tau\,\jump{\vec{J}\cdot\n}\ud s = \tau \int_{\Gamma_0} \jump{\vec{J}\cdot\n}^2 \ud s 
	\end{equation}  
and thus 
	\begin{equation}
		\lim_{\tau \rightarrow \infty} H(\vec{J},\varphi,\lambda_\tau) = \infty \,. 
	\end{equation}
In other words, the supremum over $\lambda$ drives the solution towards currents with continuous normal components. 
The solution of the hybridized variational form is found by setting $\nabla\mathcal{H} = 0$: 
	\begin{subequations}
	\begin{equation} \label{eq:diff_hyb_first_glob}
		\pderiv{\mathcal{H}}{\vec{J}} = \int 3\sigma_t\, \vec{v}\cdot\vec{J} \ud \x - \int \nablah\cdot\vec{v}\, \varphi \ud \x + \int_{\Gamma_0}\jump{\vec{v}\cdot\n} \lambda \ud s = 0 \,, \quad \forall \vec{v} \in \hRT_p \,, 
	\end{equation}
	\begin{equation} \label{eq:diff_hyb_zero_glob}
		\pderiv{\mathcal{H}}{\varphi} = -\int u\, \nablah\cdot\vec{J} \ud \x - \int \sigma_a\, u\varphi \ud \x + \int u\, Q_0 \ud \x = 0 \,, \quad \forall u \in Y_p \,,
	\end{equation}
	\begin{equation}
		\pderiv{\mathcal{H}}{\lambda} = \int_{\Gamma_0} \mu\jump{\vec{J}\cdot\n} \ud s = 0 \,, \quad \forall \mu \in \Lambda_p \,. 
	\end{equation}
	\end{subequations}
Since $\hRT_p$ and $Y_p$ are discontinuous spaces, the hybridized mixed method is equivalent to:
	\begin{subequations}
	\begin{equation} \label{eq:diff_hyb_first}
		\int_K 3\sigma_t\, \vec{v}\cdot\vec{J}\ud \x - \int_K \nabla\cdot\vec{v}|_K\, \varphi \ud \x + \int_{\partial K \cap \Gamma_0} \vec{v}\cdot\n_K \lambda \ud s = 0 \,, \quad \forall \vec{v} \in \mathbb{D}_p(K)\,,\ K \in \meshT \,, 
	\end{equation}
	\begin{equation}
		\int_K u\,\nabla\cdot\vec{J}|_K \ud \x + \int_K \sigma_a\, u \varphi \ud \x = \int_K u\, Q_0 \ud \x \,, \quad \forall u \in \mathbb{Q}_p(K)\,,\ K \in \meshT \,, 
	\end{equation}
	\begin{equation} \label{eq:hyb_cts_n}
		\int_{\Gamma_0} \mu \jump{\vec{J}\cdot\n} \ud s = 0 \,, \quad \forall \mu \in \Lambda_p \,. 
	\end{equation}
	\end{subequations}
Here, it can be seen that the degrees of freedom for the scalar flux and current are no longer globally coupled. In fact, if $\lambda$ were known, the scalar flux and current could be recovered by solving element-local radiation diffusion problems where $\lambda$ plays the role of a weak boundary condition applied on each element. Note that the non-zero boundary condition $\varphi = \bar{\varphi}$ for $\x\in\Gamma_b$ can be applied by subtracting $\int_{\Gamma_b} \vec{v}\cdot\n\, \bar{\varphi} \ud s$ from the right hand side of Eq.~\ref{eq:diff_hyb_first_glob} or equivalently by subtracting $\int_{\partial K \cap \Gamma_b} \vec{v}\cdot\n_K\, \bar{\varphi}$ from the right hand side of Eq.~\ref{eq:diff_hyb_first}. 

In hybridization, continuity of the normal component is enforced weakly (e.g.~see Eq.~\ref{eq:hyb_cts_n}). 
% However, it is well known that the resulting discrete solution will actually satisfy continuity of the normal component in a strong sense (i.e.~independent of the discretization parameters $h$ and $p$).
% For example, \citet{doi:10.1137/17M1132562} show that hybridization can be viewed as an algebraic technique similar to static condensation.  
However, it is well known that the resulting discrete solution will actually satisfy continuity of the normal component in a strong sense (e.g.~see \citet{doi:10.1137/17M1132562} where hybridization is cast as an algebraic technique similar to static condensation). 
\begin{response}[red][cts_proof]
To see this, consider the bilinear form $b : \Lambda_p \times \Lambda_p \rightarrow \R$ such that 
	\begin{equation}
		b(\mu,\chi) = \int_{\Gamma_0} \mu\chi \ud s \,. 
	\end{equation}
Note that $b(\cdot,\cdot)$ is an inner product on the space $\Lambda_p$ and thus
	\begin{equation} \label{eq:posdef}
		b(\mu,\chi) = 0 \quad \forall \mu \in \Lambda_p \iff \chi = 0 \,, 
	\end{equation}
since $b(\cdot,\cdot)$ is positive definite. 
For $\vec{J}\in\hRT_p$, $\jump{\vec{J}\cdot\n} \in \Lambda_p$ so that Eq.~\ref{eq:hyb_cts_n} is equivalent to the statement 
	\begin{equation}
		b(\mu,\jump{\vec{J}\cdot\n}) = 0 \quad \forall \mu \in \Lambda_p \,, 
	\end{equation}
which, by Eq.~\ref{eq:posdef}, holds if and only if the normal component of the current is pointwise continuous such that $\jump{\vec{J}\cdot\n} = 0$. 
The strong statement of continuity of the normal component is then implied by the weak statement in Eq.~\ref{eq:hyb_cts_n}, meaning the hybridized current will in fact have a continuous normal component. 
\end{response}
% Hybridization has also been viewed as an algebraic technique similar to static condensation in 
% This behavior is owed to the fact that the dual, mixed, and hybrid formulations all correspond to equivalent variational problems. 

\subsection{Extension to VEF}
The above variational process cannot be applied directly to the VEF equations due to their lack of symmetry. Without symmetry, it is unclear which potential the weak VEF equations correspond to or whether it would have a unique, global saddle point found by setting its gradient to zero. However, we can define a hybrid method for the VEF equations by mimicking the properties seen above for the symmetric case. In particular, we use the broken RT space, $\hRT_p$, and a Lagrange multiplier that 1) weakly enforces continuity of the normal component of the current and 2) provides inter-element boundary conditions for element-local VEF problems. As in the symmetric case, this will allow elimination of the scalar flux and current, leading to a smaller system for just the Lagrange multiplier where AMG can be applied directly. However, since the resulting method cannot be derived from a variational principle it is unclear whether the resulting hybrid formulation will be equivalent to the original mixed formulation. 

The hybridized diffusion method can be extended to the VEF equations with Miften-Larsen boundary conditions by replacing the diffusion first moment with the VEF first moment equation and using the boundary condition $\bar{\varphi} = \frac{1}{E_b}(\vec{J}\cdot\n - 2\Jin)$. 
This can be accomplished by using the element-local weak form of the first moment equation in Eq.~\ref{eq:weak_first_rt} and setting 
	\begin{equation}
		\widehat{\E\varphi}\n = \avg{\E\n}\lambda \,, \quad \mathrm{on} \ \mathcal{F} \in \Gamma_0 \,. 
	\end{equation}
Here, we are using the Lagrange multiplier $\lambda \in \Lambda_p$ as a single-valued approximation for the scalar flux on interior faces. 
The numerical flux on the boundary is the same as in Eq.~\ref{eq:rt_mlbc}. For each $K$, the element-local VEF problem is then: 
	\begin{subequations}
	\begin{multline}
		\int_{\partial K \cap \Gamma_0} \vec{v}\cdot\avg{\E\n_K} \lambda \ud s - \int_K \nabla \vec{v}|_K : \E\varphi \ud \x + \int_K \sigma_t\, \vec{v}\cdot\vec{J} \ud \x + \int_{\partial K \cap \Gamma_b} \frac{1}{E_b}\!\paren{\vec{v}\cdot\E\n_K}\!\paren{\vec{J}\cdot\n_K} \ud s \\ = \int_K \vec{v}\cdot\Qone \ud \x + 2\int_{\partial K \cap \Gamma_b} \frac{1}{E_b}\vec{v}\cdot\E\n_K\, \Jin \ud s \,, \quad \forall\vec{v} \in \mathbb{D}_p(K) \,, 
	\end{multline}
	\begin{equation}
		\int_K u\, \nabla\cdot\vec{J}|_K \ud \x + \int_K \sigma_a\, u\varphi \ud \x = \int_K u\, Q_0 \ud \x \,, \quad \forall u \in \Qbb{p}(K) \,. 
	\end{equation}
	\end{subequations}
The resulting hybrid VEF method is: find $(\vec{J},\varphi,\lambda) \in \hRT_p \times Y_p \times \Lambda_p$ such that 
	\begin{subequations}
	\begin{multline}
		\int_{\Gamma_0} \jump{\vec{v}\cdot\avg{\E\n}} \lambda \ud s - \int \nablah \vec{v} : \E\varphi \ud \x + \int \sigma_t\, \vec{v}\cdot\vec{J} \ud \x + \int_{\Gamma_b} \frac{1}{E_b}\!\paren{\vec{v}\cdot\E\n}\!\paren{\vec{J}\cdot\n} \ud s \\ = \int \vec{v}\cdot\Qone \ud \x + 2\int_{\Gamma_b} \frac{1}{E_b}\vec{v}\cdot\E\n\, \Jin \ud s \,, \quad \forall\vec{v} \in \hRT_p \,, 
	\end{multline}
	\begin{equation}
		\int_K u\, \nablah\cdot\vec{J} \ud \x + \int_K \sigma_a\, u\varphi \ud \x = \int_K u\, Q_0 \ud \x \,, \quad \forall u \in Y_p \,, 
	\end{equation}
	\begin{equation} \label{eq:hyb_cts_n_vef}
		\int_{\Gamma_0} \mu\jump{\vec{J}\cdot\n} \ud s = 0 \,, \quad \forall \mu \in \Lambda_p \,. 
	\end{equation}
	\end{subequations}
Observe that this represents element-local VEF problems where the boundary conditions are provided either by the Miften-Larsen boundary conditions on the boundary of the domain or by the Lagrange multiplier $\lambda$ for interior elements. Thus, if $\lambda$ were known, the scalar flux and current could be solved for independently on each element. 
\begin{response}[red][cts_proof_vef]
In addition, since Eq.~\ref{eq:hyb_cts_n_vef} is unmodified from the diffusion case (Eq.~\ref{eq:hyb_cts_n}), the normal component of the current will be continuous. 
\end{response}

\subsection{Implementation Details}
In matrix form, the hybridized system is 
	\begin{equation} \label{eq:hyb_form}
		\begin{bmatrix} 
			\hmat{A} & \hmat{G} & \mat{C}_2 \\ 
			\hmat{D} & \mat{M}_a \\
			\mat{C}_1 && 
		\end{bmatrix}
		\begin{bmatrix} 
			\fevec{J} \\ \fevec{\varphi} \\ \fevec{\lambda} 
		\end{bmatrix}
		= \begin{bmatrix} 
			\hat{\fevec{g}} \\ \fevec{f} \\ 0 
		\end{bmatrix} \,, 
	\end{equation}
where $\hmat{A}$, $\hmat{D}$, and $\hat{\fevec{g}}$ are defined in Eqs.~\ref{eq:A}, \ref{eq:D}, and \ref{eq:g}, respectively, but use $\mathcal{V} = \hRT_p$ and 
	\begin{equation}
		\hmat{G} = -\int \nablah\vec{v} : \E \varphi \ud \x 
	\end{equation}
is the analog of $\mat{G}$ in Eq.~\ref{eq:G} that uses $\mathcal{V} = \hRT_p$ and does not include the interior face bilinear form. 
The DG absorption mass matrix, $\mat{M}_a$, and right hand side, $\fevec{f}$, are unchanged from the original mixed form defined in Eqs.~\ref{eq:Ma} and \ref{eq:f}, respectively. The constraint matrices are defined as: 
	\begin{subequations}
	\begin{equation}
		\fevec{\mu}^T\mat{C}_1 \fevec{J} = \int_{\Gamma_0} \mu \jump{\vec{J}\cdot\n} \ud s \,,
	\end{equation}
	\begin{equation}
		\fevec{v}^T \mat{C}_2 \fevec{\lambda} = \int_{\Gamma_0} \jump{\vec{v}\cdot\avg{\E\n}} \lambda \ud s \,, 
	\end{equation}
	\end{subequations}
where $\mu,\lambda \in \Lambda_p$ and $\vec{v},\vec{J} \in \hRT_p$. 

Only the constraint matrices $\mat{C}_1$ and $\mat{C}_2$ are globally coupled. The matrices $\hmat{A}$, $\hmat{G}$, $\hmat{D}$, and $\mat{M}_a$ are all block diagonal by element and can thus be eliminated on each element without fill-in. Figure \ref{fig:hyb_sparsity_a} shows the sparsity pattern of the block system in Eq.~\ref{eq:hyb_form}. Note that this matrix can be permuted to be block diagonal by element by grouping the current and scalar flux degrees of freedom associated with each element together. This matrix is shown in Fig.~\ref{fig:hyb_sparsity_b} where it is clear that the block system has a structure amenable to efficient solution via block Gaussian elimination. 
\begin{figure}
	\centering
	\begin{subfigure}{.49\textwidth}
		\centering
		\includegraphics[width=\textwidth]{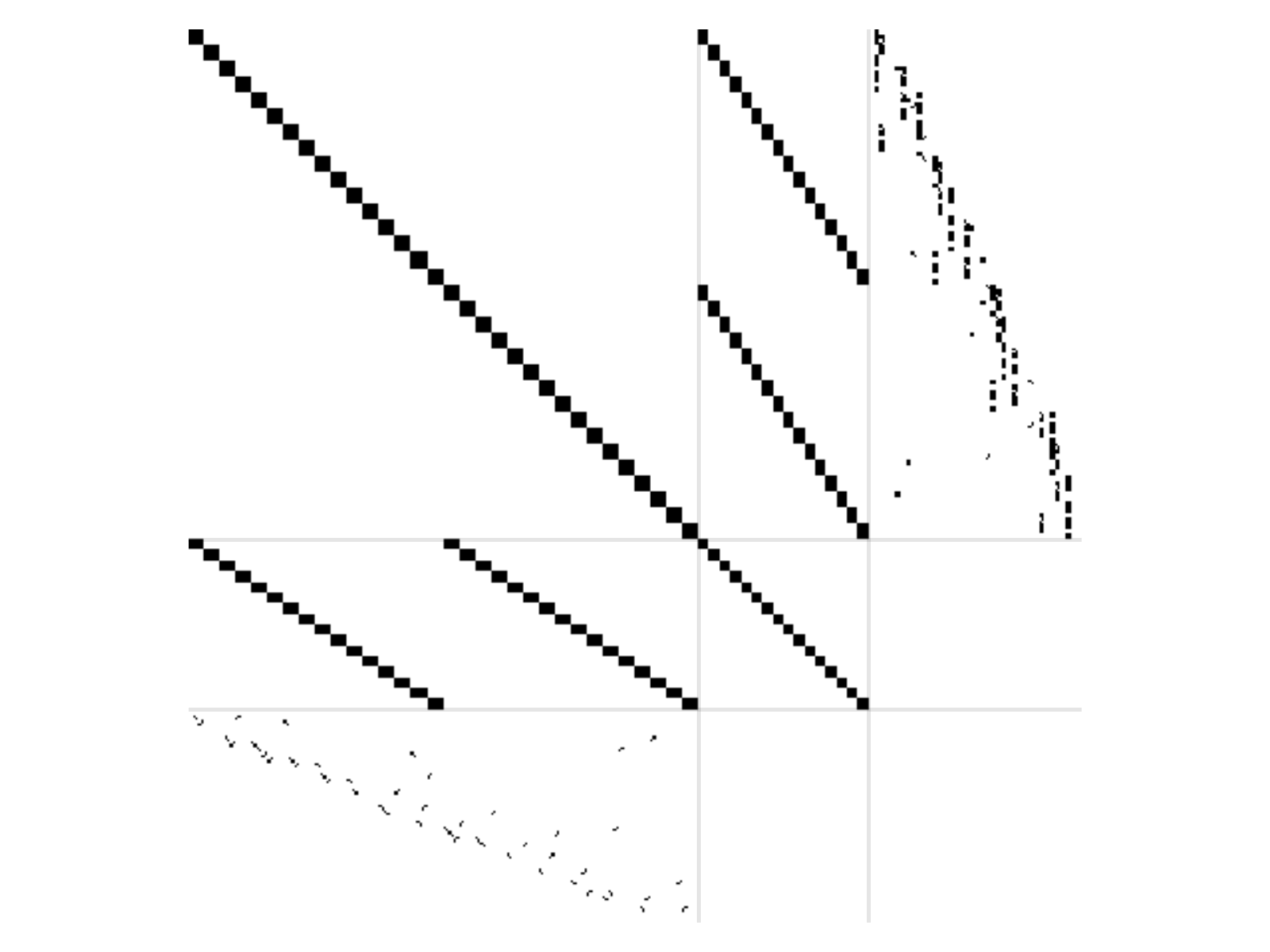}
		\caption{}
		\label{fig:hyb_sparsity_a}
	\end{subfigure}
	\begin{subfigure}{.49\textwidth}
		\centering
		\includegraphics[width=\textwidth]{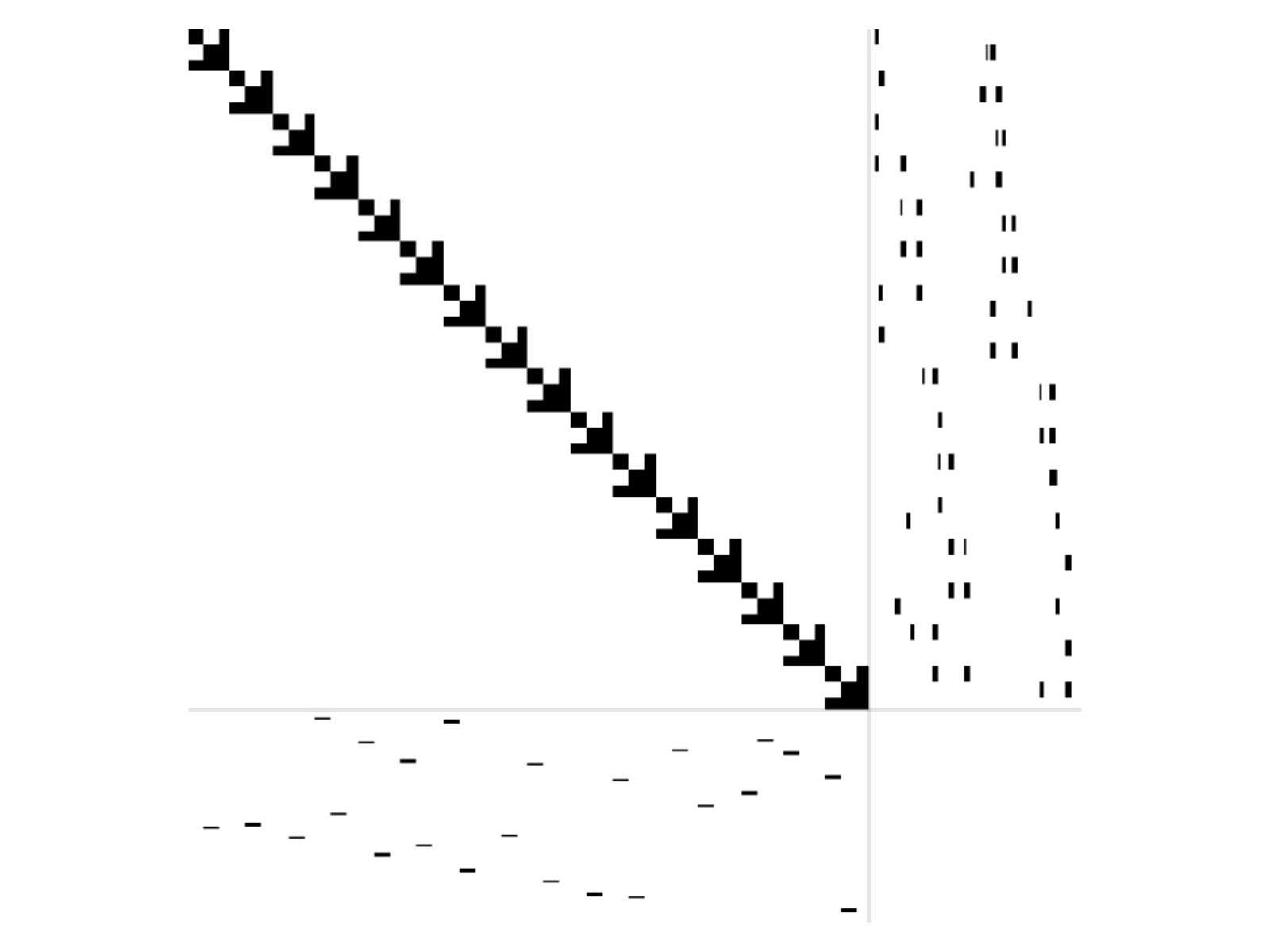}
		\caption{}
		\label{fig:hyb_sparsity_b}
	\end{subfigure}	
	\caption{Sparsity plots for the block system corresponding to the hybridized Raviart Thomas discretization for the VEF equations. In (a), the degrees of freedom are organized as $J_1$, $J_2$, $\varphi$, $\lambda$. In (b), the rows and columns of the matrix in (a) are permuted to group the currents and scalar fluxes associated with each element together. With this ordering, it is clear that the scalar flux and current can be eliminated on each element without fill-in, leaving a system for $\lambda$ only. Note that in practice, the elimination of the element-local problems is performed locally with dense operations and global sparse matrices are used to form the reduced system for the Lagrange multiplier. }
	\label{fig:hyb_sparsity}
\end{figure}

Performing block Gaussian elimination on each element, the reduced system for the Lagrange multiplier reads 
	\begin{equation}
		\mat{H} \fevec{\lambda} = 
		\begin{bmatrix} 
			\mat{C}_1 & \mat{0} 
		\end{bmatrix}
		\begin{bmatrix} 
			\hmat{A} & \hmat{G} \\ \hmat{D} & \mat{M}_a 
		\end{bmatrix}^{-1}
		\begin{bmatrix} 
			\mat{C}_2 \\ \mat{0}
		\end{bmatrix}
		\fevec{\lambda}
		= \begin{bmatrix} 
			\mat{C}_1 & \mat{0} 
		\end{bmatrix}
		\begin{bmatrix} 
			\hmat{A} & \hmat{G} \\ \hmat{D} & \mat{M}_a 
		\end{bmatrix}^{-1}
		\begin{bmatrix} 
			\hat{\fevec{g}} \\ \fevec{f} 
		\end{bmatrix} \,. 
	\end{equation}
The inverse of the local VEF problems is derived by finding the blocks $\mat{W}$, $\mat{X}$, $\mat{Y}$, and $\mat{Z}$ that satisfy
	\begin{equation} \label{eq:block_inv}
		\begin{bmatrix} 
			\hmat{A} & \hmat{G} \\ \hmat{D} & \mat{M}_a 
		\end{bmatrix}
		\begin{bmatrix} 
			\mat{W} & \mat{X} \\ \mat{Y} & \mat{Z} 
		\end{bmatrix}
		= \begin{bmatrix} 
			\mat{I} \\
			& \mat{I} 
		\end{bmatrix} \,. 
	\end{equation}
We assume that $\hmat{A}$ and the Schur complement $\hmat{S} = \mat{M}_a - \hmat{D} \hmat{A}^{-1}\hmat{G}$ are non-singular. This is justified in non-void regions where $\sigma_t > 0$. However, we do not assume $\mat{M}_a$ is non-singular since $\sigma_a \geq 0$ can be zero. 
Solving Eq.~\ref{eq:block_inv} for the blocks $\mat{W}$, $\mat{X}$, $\mat{Y}$, and $\mat{Z}$ under these constraints yields: 
	\begin{subequations}
	\begin{equation}
		\mat{W} = \hmat{A}^{-1}(\mat{I} + \hmat{G}\hmat{S}^{-1}\hmat{D}\hmat{A}^{-1}) \,,
	\end{equation}
	\begin{equation}
		\mat{X} = -\hmat{A}^{-1} \hmat{G} \hmat{S}^{-1} \,,
	\end{equation}
	\begin{equation}
		\mat{Y} = -\hmat{S}^{-1}\hmat{D} \hmat{A}^{-1} \,, 
	\end{equation}
	\begin{equation}
		\mat{Z} = \hmat{S}^{-1} \,. 
	\end{equation}
	\end{subequations}
The reduced system for the Lagrange multiplier is then 
	\begin{equation}
		\mat{H} \fevec{\lambda} = \mat{C}_1 \mat{W} \mat{C}_2 \fevec{\lambda} = \mat{C}_1 \hmat{A}^{-1}\!\paren{\mat{I} + \hmat{G}\hmat{S}^{-1}\hmat{D}\hmat{A}^{-1}} \mat{C}_2 \fevec{\lambda} = \mat{C}_1\!\paren{\mat{W}\hat{\fevec{g}} + \mat{X}\fevec{f}} \,. 
	\end{equation}
We can now rewrite the $3\times 3$ block system as 
	\begin{equation}
		\begin{bmatrix} 
			\hmat{A} & \hmat{G} & \mat{C}_2 \\ 
			\hmat{D} & \mat{M}_a \\
			&& \mat{H}
		\end{bmatrix}
		\begin{bmatrix} 
			\fevec{J} \\ \fevec{\varphi} \\ \fevec{\lambda} 
		\end{bmatrix}
		= \begin{bmatrix} 
			\hat{\fevec{g}} \\ \fevec{f} \\ \mat{C}_1\!\paren{\mat{W}\hat{\fevec{g}} + \mat{X}\fevec{f}} 
		\end{bmatrix} \,. 
	\end{equation}
This system can be solved with block back substitution. First, solve the globally coupled system 
	\begin{equation}
		\mat{H} \fevec{\lambda} = \mat{C}_1\!\paren{\mat{W}\hat{\fevec{g}} + \mat{X}\fevec{f}}
	\end{equation}
for $\fevec{\lambda}$. The element-local inverse can then be used to solve for the scalar flux and current with 
	\begin{equation}
		\begin{bmatrix} 
			\fevec{J} \\ \fevec{\varphi}
		\end{bmatrix}
		= \begin{bmatrix} 
			\mat{W} & \mat{X} \\ \mat{Y} & \mat{Z} 
		\end{bmatrix}
		\begin{bmatrix} 
			\hat{\fevec{g}} - \mat{C}_2 \fevec{\lambda} \\ 
			\fevec{f} 
		\end{bmatrix}
		= \begin{bmatrix} 
			\mat{W}\!\paren{\hat{\fevec{g}} - \mat{C}_2 \fevec{\lambda}} + \mat{X} \fevec{f} \\
			\mat{Y}\!\paren{\hat{\fevec{g}} - \mat{C}_2 \fevec{\lambda}} + \mat{Z}\fevec{f} 
		\end{bmatrix} \,. 
	\end{equation}
In this way, only $\dim(\Lambda_p)$ globally coupled unknowns must be solved for as opposed to the $\dim(\RT_p) + \dim(Y_p)$ required by the original mixed formulation. 
	
In practice, the blocks of the inverse $\mat{W}$, $\mat{X}$, $\mat{Y}$, and $\mat{Z}$ are formed using dense matrix operations applied on the element-local matrices corresponding to the degrees of freedom of a single element. The local matrices are then broadcast to a global sparse matrix in order to perform the sparse matrix multiplication required to form the reduced system for the Lagrange multiplier. The Lagrange multiplier can be scalably solved for by preconditioning $\mat{H}$ with AMG. In addition, recovering the scalar flux and current is a post-processing step that is independent on each element and thus scales optimally. 

\section{Results}
The VEF algorithms presented here were implemented using the MFEM \cite{mfem-paper,mfem-web} finite element framework. The stabilized bi-conjugate gradient (BiCGStab) solver from MFEM was used to solve the discretized VEF equations. Lower block triangular preconditioners were built using MFEM's Jacobi smoother and BoomerAMG from the sparse linear algebra package \emph{hypre} \cite{hypre}. KINSOL, from the Sundials package \cite{hindmarsh2005sundials}, provided the fixed-point and Anderson-accelerated fixed-point solvers. 
As described in \citet[\S 2]{hindmarsh2005sundials}, the fixed-point and Anderson-accelerated fixed-point iteration is terminated when the max norm of the difference between successive iterates is below the iterative tolerance.
The parallel implementation of the sparse direct solver SuperLU \cite{lidemmel03} is used when preconditioned iterative solvers results are not presented. 
The streaming and collision operator is inverted using the transport solver from \cite{graph_sweeps}. 
Unless otherwise noted, the angular flux and VEF scalar flux are approximated using the same degree finite element spaces. However, the positive Bernstein polynomials \cite{doi:10.1137/11082539X} are used for the transport discretization's local polynomial basis whereas the Lagrange basis through the Gauss-Legendre points is used for the VEF scalar flux. 
The positive transport basis facilitates the application of the quadratic programming negative flux fixup from \cite{YEE2020109696} that is used on the crooked pipe problem in \S\ref{sec:cp}. 

Since all methods produce a VEF scalar flux in $Y_p$, the methods are parameterized by their choice of space for the current. Thus, we refer to the $Y_p \times W_{p+1}$, $Y_p \times \RT_p$, and $Y_p \times \hRT_p \times \Lambda_p$ methods as H1, RT, and HRT, respectively.  

\subsection{Method of Manufactured Solutions}
The accuracy of the methods are determined with the Method of Manufactured Solutions (MMS). The solution is set to 
	\begin{equation} \label{eq:mms_psi}
		\psi = \frac{1}{4\pi}\bracket{\alpha(\x) + \Omegahat\cdot\vec{\beta}(\x) + \Omegahat\otimes\Omegahat : \mat{\Theta}(\x)} \,,
	\end{equation}
where 
	\begin{subequations}
	\begin{equation}
		\alpha(\x) = \sin(\pi x) \sin(\pi y) + \delta \,, 
	\end{equation}
	\begin{equation}
		\vec{\beta}(\x) = \begin{bmatrix} 
			\sin\!\paren{\frac{2\pi(x+\omega)}{1+2\omega}}\sin\!\paren{\frac{2\pi(y+\omega)}{1+2\omega}}\\
			\sin\!\paren{\frac{2\pi(x+\omega)}{1+2\omega}}\sin\!\paren{\frac{2\pi(y+\omega)}{1+2\omega}}
		\end{bmatrix} \,,
	\end{equation}
	\begin{equation} \label{eq:mmsH}
		\mat{\Theta}(\x) = \begin{bmatrix} 
			\frac{1}{2}\sin\!\paren{\frac{3\pi(x+\zeta)}{1 + 2\zeta}}\sin\!\paren{\frac{3\pi(y+\zeta)}{1+2\zeta}}
			& \sin\!\paren{\frac{2\pi(x+\omega)}{1+2\omega}}\sin\!\paren{\frac{2\pi(y+\omega)}{1+2\omega}}\\
			\sin\!\paren{\frac{2\pi(x+\omega)}{1+2\omega}}\sin\!\paren{\frac{2\pi(y+\omega)}{1+2\omega}}
			& \frac{1}{4}\sin\!\paren{\frac{3\pi(x+\zeta)}{1 + 2\zeta}}\sin\!\paren{\frac{3\pi(y+\zeta)}{1+2\zeta}}
		\end{bmatrix} \,. 
	\end{equation}
	\end{subequations}
Here, $\delta = 1.25$ is used to ensure $\psi>0$ and $\zeta = 0.1$ and $\omega = 0.05$ are used to test spatially dependent, non-isotropic inflow boundary conditions. The domain is $\D = [0,1]^2$. 
With this definition: 
	\begin{subequations}
	\begin{equation}
		\phi(\x) = \alpha(\x) + \frac{1}{3}\tr\mat{\Theta}(\x) \,,
	\end{equation}
	\begin{equation}
		\vec{J}(\x) = \frac{1}{3}\vec{\beta}(\x) \,,
	\end{equation}
	\begin{equation}
		\P(\x) = \frac{\alpha(\x)}{3}\I + \frac{1}{15}\begin{bmatrix} 
			3 \Theta_{11}(\x) + \Theta_{22}(\x) & \Theta_{12}(\x) \\ \Theta_{21}(\x) & \Theta_{11}(\x) + 3 \Theta_{22}(\x) 
		\end{bmatrix} \,. 
	\end{equation}
	\end{subequations}
This leads to an exact Eddington tensor $\E = \P/\phi$ that is dense and spatially varying. The MMS $\psi$ and $\phi$ are substituted into the transport equation to solve for the MMS source $q$ that forces the solution to Eq.~\ref{eq:mms_psi}. 

The accuracy of the VEF discretizations are investigated in isolation by computing the VEF data from the MMS angular flux and setting the sources $Q_0$ and $\vec{Q}_1$ to the moments of the MMS source. This is accomplished by projecting the MMS angular flux onto a degree-$p$ DG finite element space and using Level Symmetric $S_4$ angular quadrature to compute the VEF data, the moments of the MMS source, and the inflow boundary function. The VEF equations are then solved as if $\E$, $E_b$, $Q_0$, $\Qone$, and $\Jin$ are given data. Errors are calculated with the $L^2(\D)$ norm for scalars and the $[L^2(\D)]^2$ norm for vectors given by
	\begin{equation}
		\| u \| = \sqrt{\int u^2 \ud \x} \,,
	\end{equation}
and
	\begin{equation}
		\| \vec{v} \| = \sqrt{\int \vec{v}\cdot\vec{v} \ud \x}\,,
	\end{equation}
respectively. We also use the $L^2(\D)$ projection operator $\Pi_p : L^2(\D) \rightarrow Y_p$ such that 
	\begin{equation}
		\int u(v - \Pi_p v) \ud \x = 0 \,, \quad \forall u \in Y_p \,, 
	\end{equation}
for some $v \in L^2(\D)$. 
In particular, $\Pi_p$ is used to project the exact MMS scalar flux onto a $Y_p$ finite element grid function in order to investigate a superconvergence property of mixed finite elements. 

\begin{figure}
\centering
\includegraphics[width=.35\textwidth]{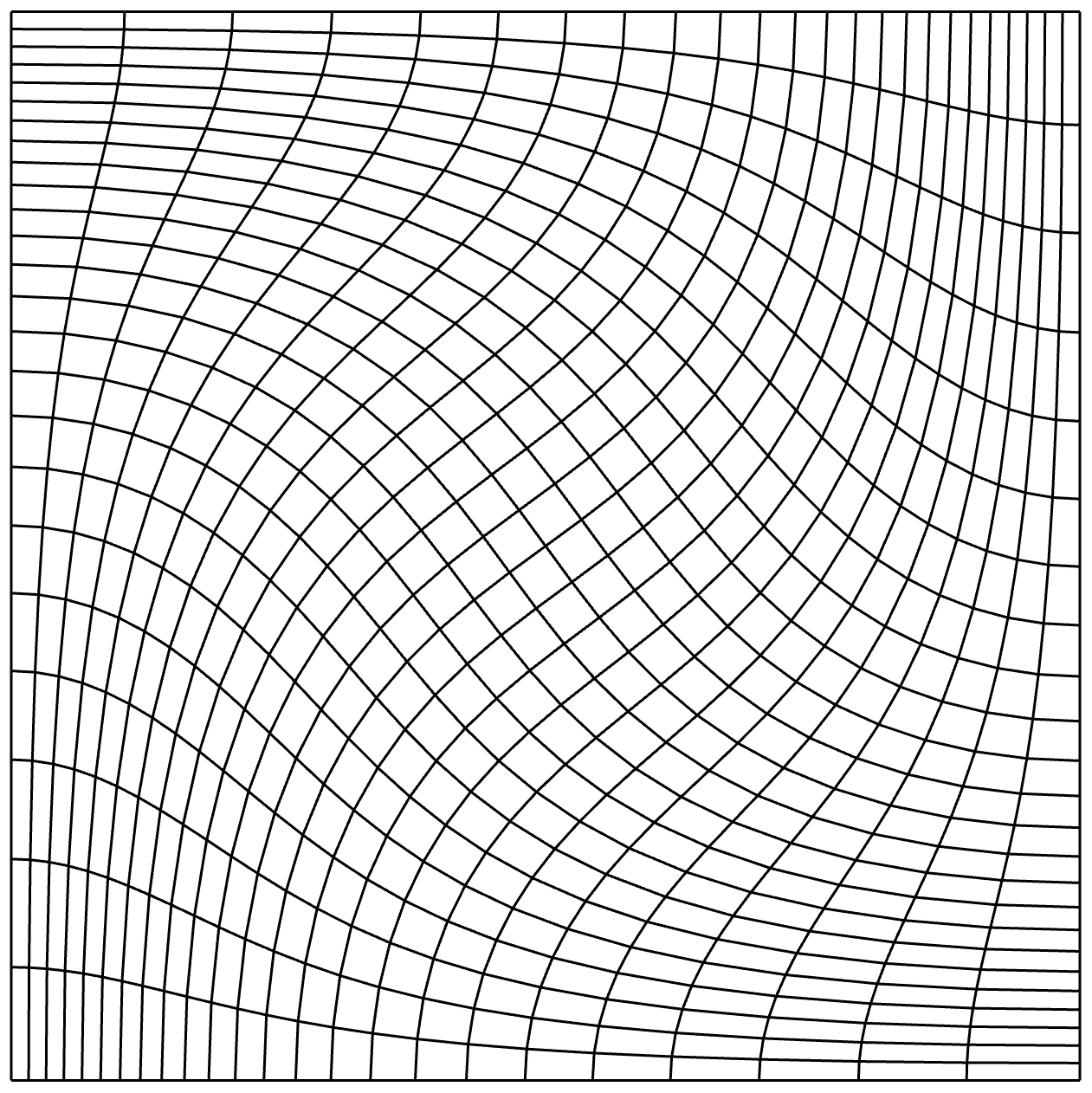}
\caption{A depiction of a third-order mesh generated by distorting an orthogonal mesh according to the Taylor Green vortex. Refinements of this mesh are used in calculating the error with the method of manufactured solutions.}
\label{fig:tgmesh}
\end{figure}
We use refinements of a third-order mesh created by distorting an orthogonal mesh according to the velocity field of the Taylor Green vortex. This mesh distortion is generated by advecting the mesh control points with 
	\begin{equation}
		\x = \int_0^T \mat{v} \ud t \,,
	\end{equation}
where the final time $T=0.3\pi$ and 
	\begin{equation}
		\mat{v} = \begin{bmatrix} 
			\sin(x) \cos(y) \\ 
			-\cos(x) \sin(y) 
		\end{bmatrix}
	\end{equation}
is the analytic solution of the Taylor Green vortex. 300 forward Euler steps were used to advect the mesh. An example mesh is shown in Fig.~\ref{fig:tgmesh}. Logarithmic regression is used to fit the constant and order of accuracy according to 
	\begin{equation}
		E = C h^{\tilde{p}}
	\end{equation}
where $E$ is the error, $C$ the constant, and $\tilde{p}$ the order of accuracy. Four values of $h$ were used for each MMS problem considered in this section. 
The raw error values for each of the MMS problems presented in this section are provided in Appendix \ref{app:mms}. 

% --- MMS diffusion --- 
\begin{table}
\centering
\caption{Estimates of the order of accuracy and constant from an isotropic MMS test problem. The H1, RT, and HRT columns refer to the $Y_p\times W_{p+1}$, $Y_p\times \RT_p$, and hybridized $Y_p\times \RT_p$ discretizations, respectively. The error in the scalar flux, the error in the scalar flux when the exact solution is first projected onto $Y_p$, and the error in the current are presented for each method over a range of values of $p$. Here, the VEF data are constant in space and thus are represented exactly. }
\label{tab:mms_diff}
\begin{adjustbox}{max width=1.1\textwidth,center}
\begin{tabular}{ccccccccccccccc}
\toprule
 &  &  & \multicolumn{3}{c}{$\| \varphi - \varphi_\text{ex}\|$}  &  & \multicolumn{3}{c}{$\| \varphi - \Pi \varphi_\text{ex}\|$}  &  & \multicolumn{3}{c}{$\| \vec{J} - \vec{J}_\text{ex}\|$} \\
\cmidrule{4-6}\cmidrule{8-10}\cmidrule{12-14}
$p$ & Value & & H1 & RT & HRT & & H1 & RT & HRT & & H1 & RT & HRT \\
\midrule
\multirow{2}{*}{1} & Order & & 2.000 & 2.000 & 2.000 & & 3.017 & 3.053 & 3.053 & & 2.001 & 2.000 & 2.000 \\
 & Constant & & 0.261 & 0.261 & 0.261 & & 0.163 & 0.197 & 0.197 & & 0.353 & 0.785 & 0.785 \\
\addlinespace
\multirow{2}{*}{2} & Order & & 3.001 & 3.003 & 3.003 & & 4.144 & 4.096 & 4.096 & & 3.150 & 2.989 & 2.989 \\
 & Constant & & 0.070 & 0.070 & 0.070 & & 0.090 & 0.142 & 0.142 & & 0.202 & 0.780 & 0.780 \\
\addlinespace
\multirow{2}{*}{3} & Order & & 3.995 & 4.016 & 4.016 & & 5.098 & 5.125 & 5.125 & & 4.018 & 4.016 & 4.016 \\
 & Constant & & 0.027 & 0.030 & 0.030 & & 0.048 & 0.132 & 0.132 & & 0.118 & 0.928 & 0.928 \\
\addlinespace
\multirow{2}{*}{4} & Order & & 4.971 & 4.971 & 4.971 & & 6.013 & 5.964 & 5.963 & & 5.096 & 4.675 & 4.675 \\
 & Constant & & 0.034 & 0.034 & 0.034 & & 0.048 & 0.045 & 0.045 & & 0.157 & 0.217 & 0.217 \\
\bottomrule
\end{tabular}
\end{adjustbox}
\end{table}
We first show the accuracy of the three methods on a simple radiation diffusion problem. The above process is used with $\mat{\Theta} = 0$ so that the angular flux is linearly anisotropic. This forces the Eddington tensor and boundary factor to $\E = \frac{1}{3}\I$ and $E_b = \frac{1}{2}$, mimicking a radiation diffusion problem. Table \ref{tab:mms_diff} shows the estimated order of accuracy and constant for $p\in[1,4]$. The error in the scalar flux is computed with two methods: 1) by comparing to the analytic MMS scalar flux solution directly and 2) by projecting the analytic MMS solution onto the corresponding $Y_p$ space. For all orders, the first error measure for the scalar flux converges $\mathcal{O}(h^{p+1})$ while the second converges $\mathcal{O}(h^{p+2})$. This is a mixed finite element superconvergence result that indicates that the nodal values of the scalar flux solution converge one order higher than the $Y_p$ interpolation allows. The current converges as $\mathcal{O}(h^{p+1})$ for all three methods and all orders. 
% except for $Y_0 \times W_1$ which converges as $\mathcal{O}(h^{3/2})$ instead of $\mathcal{O}(h)$. 
On this diffusive problem, the scalar flux and current solutions from the unhybridized and hybridized RT methods are equivalent to machine precision. 

% --- MMS VEF --- 
\begin{table}
\centering
\caption{Estimates of the order of accuracy and constant from a quadratically anisotropic MMS test problem. The H1, RT, and HRT columns refer to the $Y_p\times W_{p+1}$, $Y_p\times \RT_p$, and hybridized $Y_p\times \RT_p$ discretizations, respectively. The error in the scalar flux, the error in the scalar flux when the exact solution is first projected onto $Y_p$, and the error in the current are presented for each method over a range of values of $p$. Here, the angular flux used to calculate the VEF data is represented with $Y_p$. Due to this, the maximum accuracy expected is order $p+1$. }
\label{tab:mms_same}
\begin{adjustbox}{max width=1.1\textwidth,center}
\begin{tabular}{ccccccccccccccc}
\toprule
 &  &  & \multicolumn{3}{c}{$\| \varphi - \varphi_\text{ex}\|$}  &  & \multicolumn{3}{c}{$\| \varphi - \Pi \varphi_\text{ex}\|$}  &  & \multicolumn{3}{c}{$\| \vec{J} - \vec{J}_\text{ex}\|$} \\
\cmidrule{4-6}\cmidrule{8-10}\cmidrule{12-14}
$p$ & Value & & H1 & RT & HRT & & H1 & RT & HRT & & H1 & RT & HRT \\
\midrule
\multirow{2}{*}{1} & Order & & 2.004 & 2.004 & 2.004 & & 2.310 & 2.332 & 2.318 & & 1.939 & 0.974 & 0.980 \\
 & Constant & & 1.200 & 1.200 & 1.198 & & 0.430 & 0.488 & 0.451 & & 3.831 & 0.394 & 0.353 \\
\addlinespace
\multirow{2}{*}{2} & Order & & 2.958 & 2.957 & 2.963 & & 2.995 & 2.979 & 3.054 & & 2.486 & 2.564 & 2.522 \\
 & Constant & & 1.233 & 1.225 & 1.263 & & 0.352 & 0.329 & 0.485 & & 1.601 & 1.605 & 1.477 \\
\addlinespace
\multirow{2}{*}{3} & Order & & 4.046 & 4.045 & 4.044 & & 4.348 & 4.313 & 4.263 & & 4.003 & 2.857 & 2.905 \\
 & Constant & & 2.612 & 2.599 & 2.592 & & 0.942 & 0.837 & 0.710 & & 12.054 & 0.555 & 0.584 \\
\addlinespace
\multirow{2}{*}{4} & Order & & 4.787 & 4.785 & 4.783 & & 5.033 & 4.921 & 4.845 & & 4.221 & 4.351 & 4.454 \\
 & Constant & & 0.931 & 0.923 & 0.923 & & 0.421 & 0.283 & 0.258 & & 1.011 & 1.458 & 2.050 \\
\bottomrule
\end{tabular}
\end{adjustbox}
\end{table}
This test is repeated for the quadratically anisotropic MMS problem (i.e.~using $\mat{\Theta}(\x)$ as defined in Eq.~\ref{eq:mmsH}) in Table \ref{tab:mms_same}. Projecting the MMS angular flux solution onto $Y_p$ induces errors of order $\mathcal{O}(h^{p+1})$ in the calculation of the VEF data. Thus, since the VEF data are computed from the projected MMS solution, it is expected that this problem can converge at a maximum of order $p+1$. 
This can be seen in the loss of the superconvergence property. Here, both error measures for the scalar flux converge with $\mathcal{O}(h^{p+1})$. On this transport MMS problem, the current convergence is also reduced. 
Compared to the diffusion case, the H1 current error is maintained for $p$ odd but is reduced by $1/2$ for $p$ even. The RT and HRT methods lose one order for $p$ odd but only half an order for $p$ even. In addition, the RT and HRT discretizations are no longer equivalent to machine precision. 
This loss of equivalence may be due to inexact numerical quadrature in terms involving the VEF data -- the VEF data are improper rational polynomials in space and thus cannot be exactly integrated with numerical quadrature -- or may indicate that the hybrid and mixed formulations are only equivalent for the symmetric case of radiation diffusion. 

% --- MMS elevated psi --- 
\begin{table}
\centering
\caption{Estimates of the order of accuracy and constant from a quadratically anisotropic MMS test problem. The H1, RT, and HRT columns refer to the $Y_p\times W_{p+1}$, $Y_p\times \RT_p$, and hybridized $Y_p\times \RT_p$ discretizations, respectively. The error in the scalar flux, the error in the scalar flux when the exact solution is first projected onto $Y_p$, and the error in the current are presented for each method over a range of values of $p$. Here, the angular flux used to calculate the VEF data is represented with $Y_{p+1}$. Due to this, the maximum accuracy expected is order $p+2$.}
\label{tab:mms_elev}
\begin{adjustbox}{max width=1.1\textwidth,center}
\begin{tabular}{ccccccccccccccc}
\toprule
 &  &  & \multicolumn{3}{c}{$\| \varphi - \varphi_\text{ex}\|$}  &  & \multicolumn{3}{c}{$\| \varphi - \Pi \varphi_\text{ex}\|$}  &  & \multicolumn{3}{c}{$\| \vec{J} - \vec{J}_\text{ex}\|$} \\
\cmidrule{4-6}\cmidrule{8-10}\cmidrule{12-14}
$p$ & Value & & H1 & RT & HRT & & H1 & RT & HRT & & H1 & RT & HRT \\
\midrule
\multirow{2}{*}{0} & Order & & 0.999 & 0.999 & 0.999 & & 2.019 & 2.002 & 2.001 & & 1.477 & 1.001 & 1.001 \\
 & Constant & & 0.781 & 0.780 & 0.780 & & 1.439 & 1.338 & 1.304 & & 2.561 & 0.517 & 0.516 \\
\addlinespace
\multirow{2}{*}{1} & Order & & 2.001 & 2.001 & 2.001 & & 3.012 & 2.954 & 2.969 & & 1.941 & 0.987 & 1.887 \\
 & Constant & & 1.180 & 1.179 & 1.178 & & 1.683 & 1.488 & 1.390 & & 2.377 & 0.083 & 0.583 \\
\addlinespace
\multirow{2}{*}{2} & Order & & 2.961 & 2.960 & 2.960 & & 3.990 & 4.028 & 4.006 & & 3.065 & 2.967 & 2.903 \\
 & Constant & & 1.208 & 1.204 & 1.204 & & 2.383 & 2.447 & 2.347 & & 3.312 & 1.273 & 0.783 \\
\addlinespace
\multirow{2}{*}{3} & Order & & 4.042 & 4.041 & 4.041 & & 4.965 & 4.732 & 4.759 & & 3.931 & 2.726 & 3.667 \\
 & Constant & & 2.554 & 2.545 & 2.545 & & 1.883 & 0.896 & 0.864 & & 6.673 & 0.102 & 0.575 \\
\bottomrule
\end{tabular}
\end{adjustbox}
\end{table}
Finally, we repeat the transport MMS problem in the case where the angular flux solution is projected onto $Y_{p+1}$ instead of $Y_p$. This allows a maximum accuracy in the problem of $\mathcal{O}(h^{p+2})$. The estimated orders of convergence and constants are provided in Table \ref{tab:mms_elev}. Convergence rates similar to the diffusion problem are observed: the scalar flux solutions converge optimally for all methods and superconvergence of the scalar flux returns. The H1 and HRT methods produce currents that converge at similar rates as in the diffusion case. However, the unhybridized RT method converges suboptimally by one order for $p$ even. The difference in convergence rates between the RT and HRT methods indicates the HRT method is in fact a new discretization for the VEF equations and not simply an algebraic method to reduce the number of globally coupled unknowns. 

The error behavior of the current on the above three MMS problems is summarized in Table \ref{tab:mms_summary}. We stress that the H1, RT, and HRT methods all generated scalar flux solutions with the optimal error behavior on each of the above MMS problems. 
The methods differed only in the error associated with the current. 
\begin{table}
\centering
\caption{A summary of the order of accuracies of the current on the three MMS problems for the H1, RT, and HRT methods grouped into even and odd polynomial degrees. All methods converged the scalar flux with optimal $\mathcal{O}(h^{p+1})$ accuracy on all problems. }
\label{tab:mms_summary}
\begin{tabular}{c cc c cc c cc}
\toprule 
& \multicolumn{3}{c}{Even $p$} && \multicolumn{3}{c}{Odd $p$} \\ 
\cmidrule{2-4} \cmidrule{6-8}
Problem & H1 & RT & HRT && H1 & RT & HRT \\ 
\midrule
Radiation Diffusion & $p+1$ & $p+1$ & $p+1$ && $p+1$ & $p+1$ & $p+1$ \\
Transport w/ $\psi_\text{MMS} \in Y_p$ & $p+1/2$ & $p+1/2$ & $p+1/2$ && $p+1$ & $p$ & $p$ \\ 
Transport w/ $\psi_\text{MMS} \in Y_{p+1}$ & $p+1$$^*$ & $p+1$ & $p+1$ && $p+1$ & $p$ & $p+1$ \\
\bottomrule
$^*$converged $\mathcal{O}(h^{3/2})$ for $p=0$ 
\end{tabular}
\end{table}

\subsection{Thick Diffusion Limit}
The convergence of the VEF methods are investigated in the thick diffusion limit. The material data are set to 
	\begin{equation}
		\sigma_t = 1/\epsilon \,, \quad \sigma_a = \epsilon \,, \quad \sigma_s = 1/\epsilon - \epsilon \,, \quad q = \epsilon \,, 
	\end{equation}
where $\epsilon \in (0,1]$ and the thick diffusion limit corresponds to the limit $\epsilon \rightarrow 0$. We use two coarse meshes that do not resolve the mean free path to stress the convergence of the VEF method. The first is an orthogonal $8\times 8$ mesh with $\D = [0,1]^2$. The second is the triple point mesh shown in Fig.~\ref{fig:triple_point_mesh}, a third-order mesh generated with a Lagrangian hydrodynamics code where $\D = [0,7]\times [0,3]$. 
On the triple point mesh, the streaming and collision operator cannot be reordered to be lower block triangular by element due to the presence of reentrant/concave faces. 
A standard transport sweep can be applied by iteratively lagging the strictly upper block triangular components of the streaming and collision operator.
The pseudo-optimal element reordering proposed in \citet{graph_sweeps} is used to minimize the amount of information lagged due to reentrant faces. 
Since the angular flux is only approximately inverted at each iteration it is expected that iterative efficiency will degrade compared to an analogous problem on a straight-edged mesh. 
In addition, highly distorted elements have poor approximation properties. We use Level Symmetric $S_4$ angular quadrature. The three methods are compared when $p=2$. The coupled transport-VEF system is solved with fixed-point iteration. 
% --- triple point mesh --- 
\begin{figure}
\centering
\includegraphics[width=.65\textwidth]{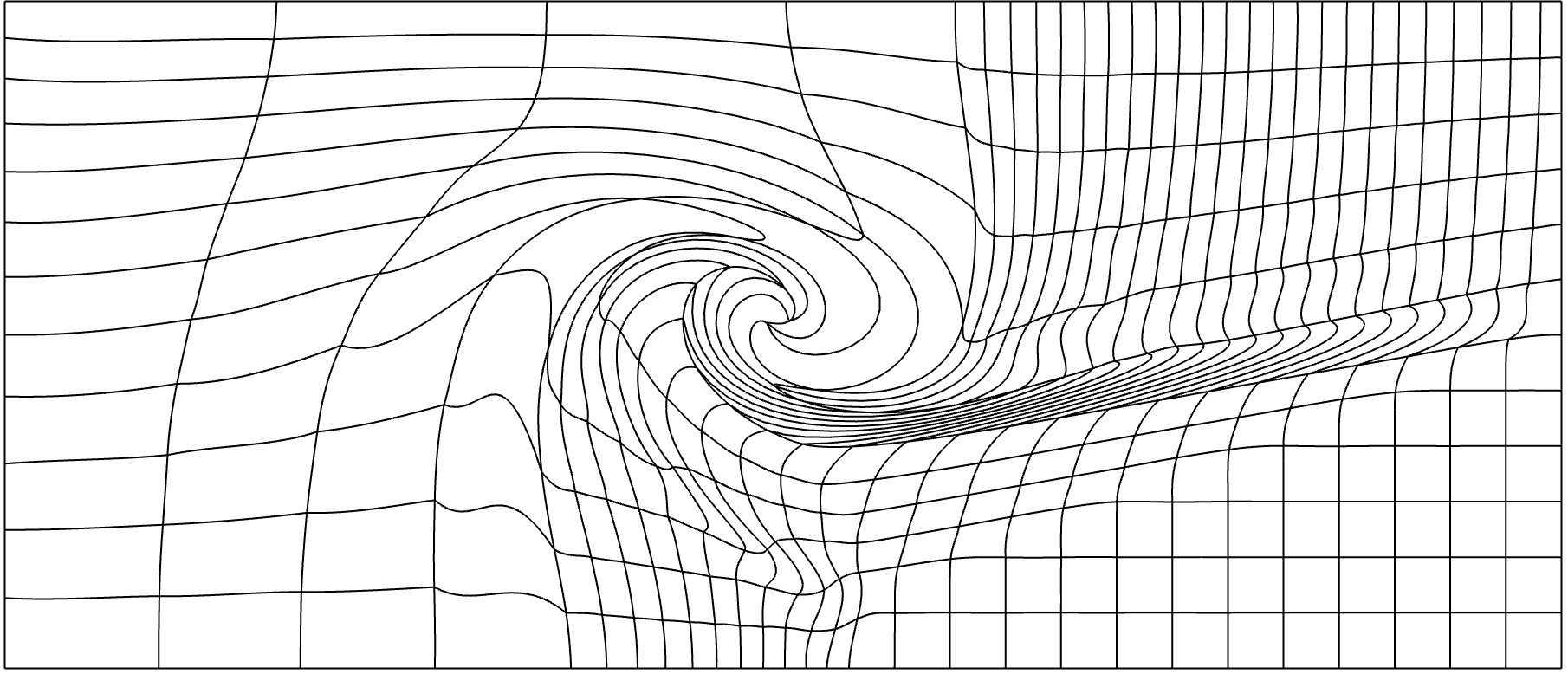}
\caption{A depiction of the triple point mesh used to stress the VEF algorithms on a severely distorted, third-order mesh. This mesh was generated with a Lagrangian hydrodynamics simulation. }
\label{fig:triple_point_mesh}
\end{figure}

Table \ref{tab:tdl} shows the number of fixed-point iterations until convergence to a tolerance of $10^{-6}$ for each method on the orthogonal and triple point meshes. Rapid convergence is seen for all methods on both problems. The three methods converged equivalently on the orthogonal mesh. 
On the triple point mesh, all of the methods converged slower compared to the corresponding problem solved on an orthogonal mesh. 
The RT and HRT methods converged in an equivalent number of iterations with H1 converging a few iterations faster than RT/HRT. 
Lineouts of the 2D VEF scalar flux solutions for each method as $\epsilon \rightarrow 0$ are provided in Figs.~\ref{fig:eps_lineout} and \ref{fig:eps_lineout_3p} for the orthogonal and triple point meshes, respectively. In all cases, the non-trivial diffusion limit solution was found. 
On the triple point problem, non-physical, non-monotone oscillations are observed due to the imprinting of the mesh on the solution. 
The oscillations are larger in magnitude for RT and HRT methods compared to H1. 
This may suggest that the quality of the RT and HRT solutions are more sensitive to mesh distortion than H1. 

% --- thick diffusion limit orthogonal and 3point --- 
\begin{table}
\centering
\caption{The number of fixed-point iterations required for convergence as the thick diffusion limit parameter $\epsilon \rightarrow 0$. The H1, RT, and HRT columns refer to the $Y_2\times W_{3}$, $Y_2\times \RT_2$, and hybridized $Y_2\times \RT_2$ discretizations, respectively. Convergence is tested on an orthogonal $8\times 8$ mesh and on the triple point mesh, a mesh with re-entrant faces. Due to the re-entrant faces, an inexact transport sweep is used making convergence slower on the triple point mesh.}
\label{tab:tdl}
\begin{tabular}{ccccccccc}
\toprule
 & \multicolumn{3}{c}{Orthogonal}  &  & \multicolumn{3}{c}{Triple Point} \\
\cmidrule{2-4}\cmidrule{6-8}
$\epsilon$ & H1 & RT & HRT & & H1 & RT & HRT \\
\midrule
$10^{-1}$ & 8 & 8 & 8 & & 20 & 21 & 21 \\
$10^{-2}$ & 6 & 6 & 6 & & 13 & 19 & 19 \\
$10^{-3}$ & 4 & 4 & 4 & & 9 & 13 & 13 \\
$10^{-4}$ & 3 & 3 & 3 & & 6 & 8 & 8 \\
\bottomrule
\end{tabular}
\end{table}

% --- orthogonal lineouts --- 
\begin{figure}
\centering
\foreach \f in {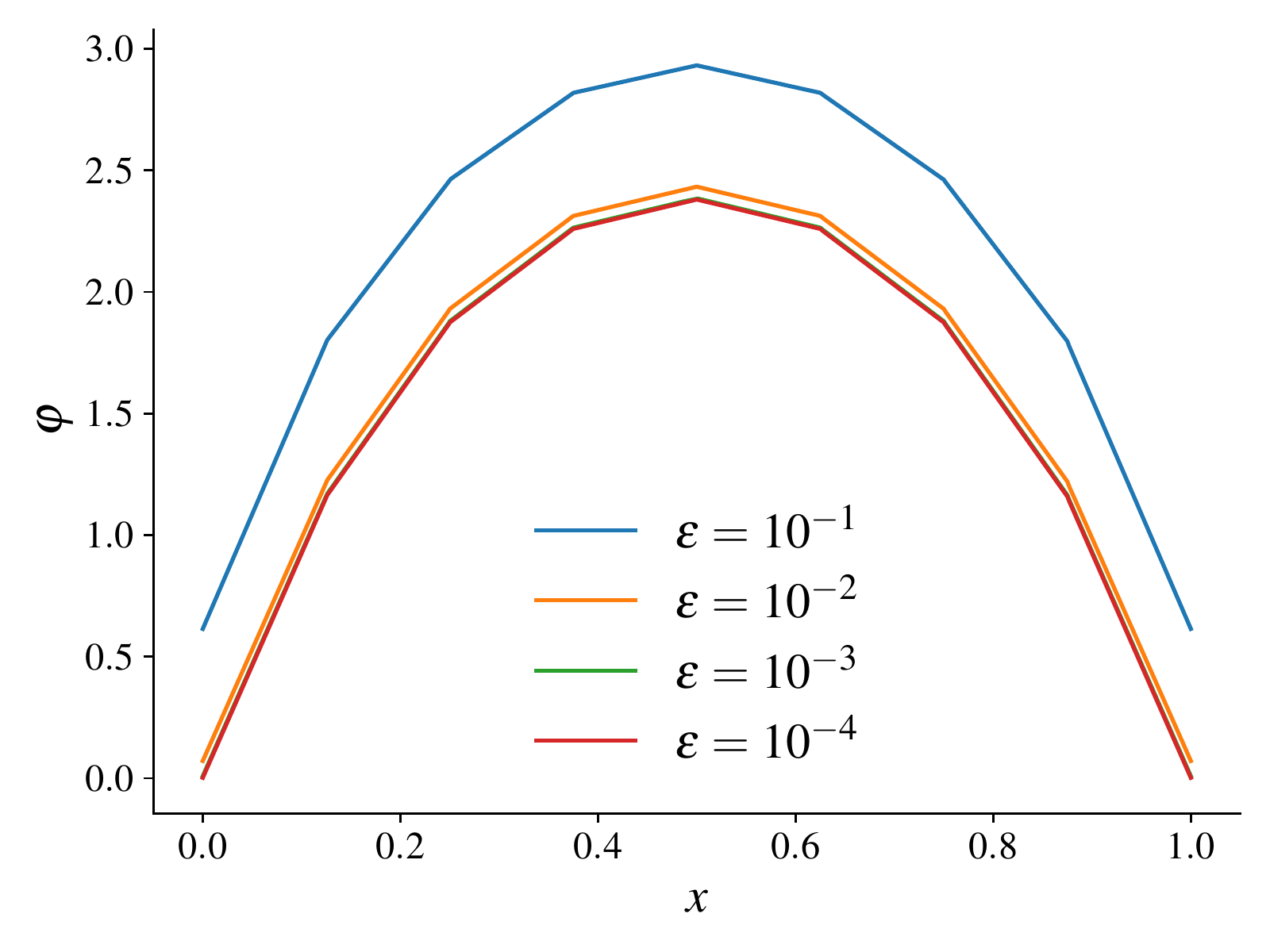,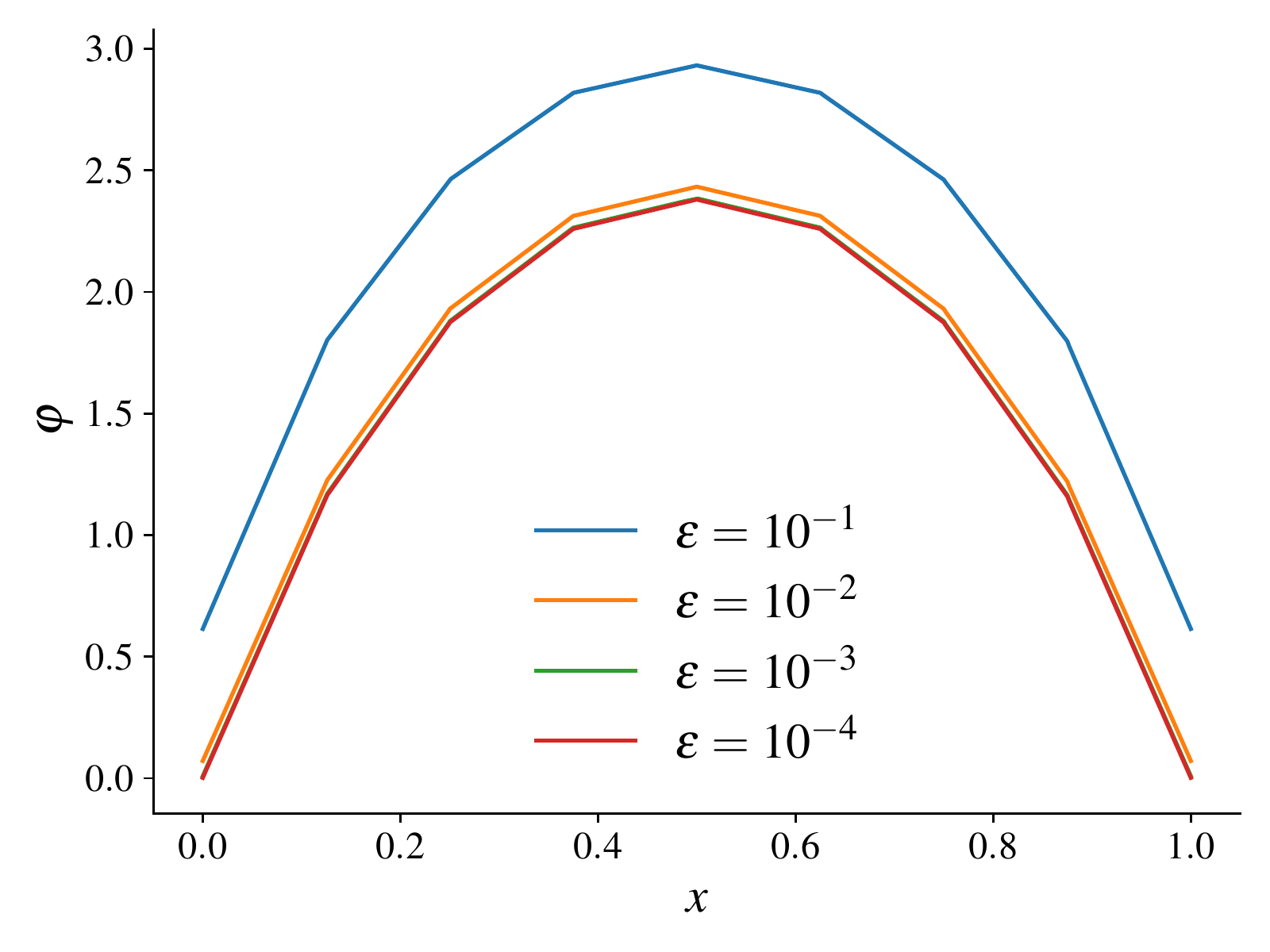,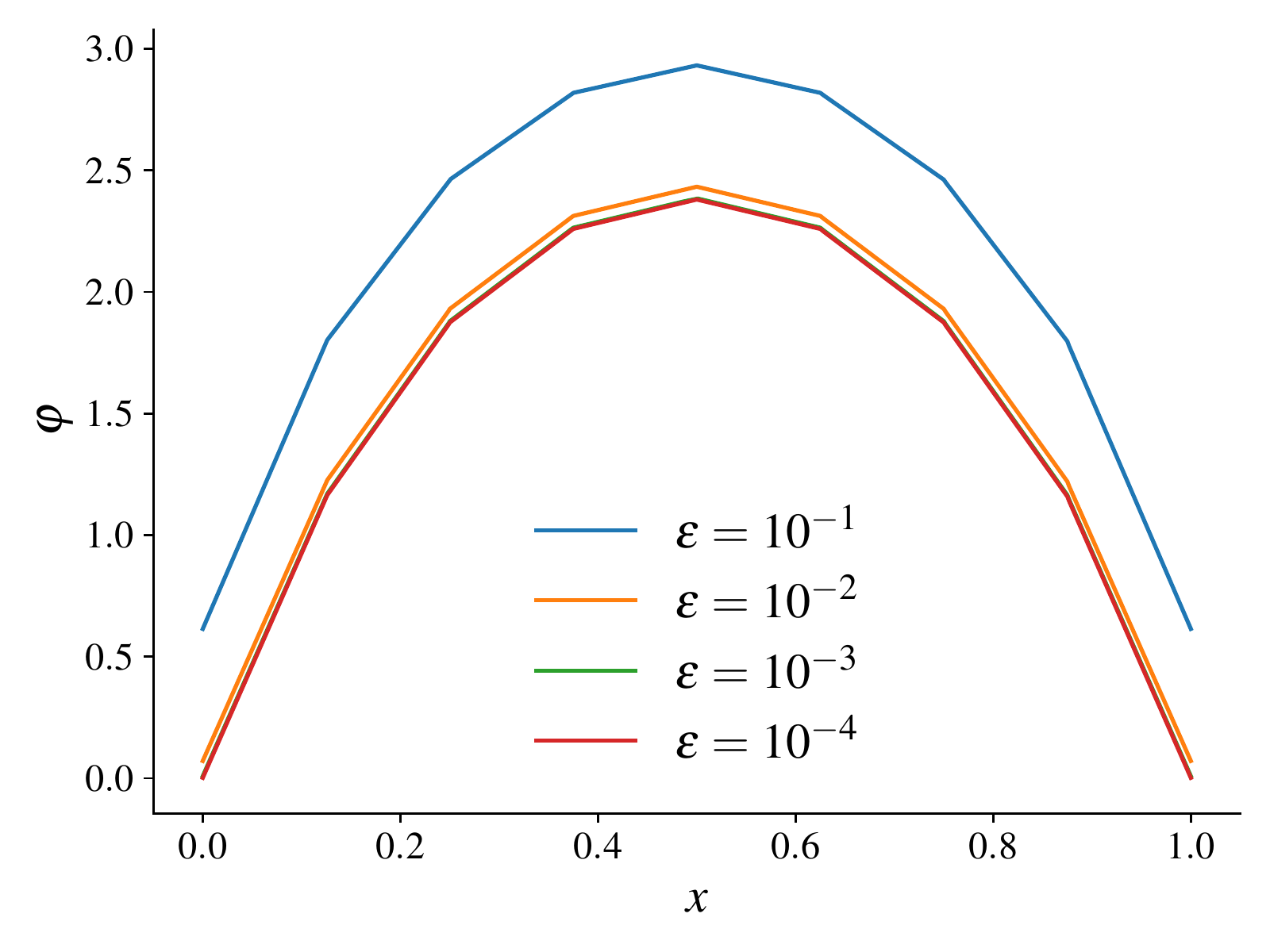}{
	\begin{subfigure}{.32\textwidth}
	\centering
	\includegraphics[width=\textwidth]{\f}
	\caption{}
	\end{subfigure}	
}
\caption{Lineouts of the 2D solution at $y=1/2$ as $\epsilon\rightarrow 0$ for the (a) H1, (b) RT, and (c) HRT methods on an orthogonal $8\times 8$ mesh. The methods all converge to the asymptotic solution indicating they preserve the thick diffusion limit.}
\label{fig:eps_lineout}
\end{figure}

% --- 3point lineouts --- 
\begin{figure}
\centering
\foreach \f in {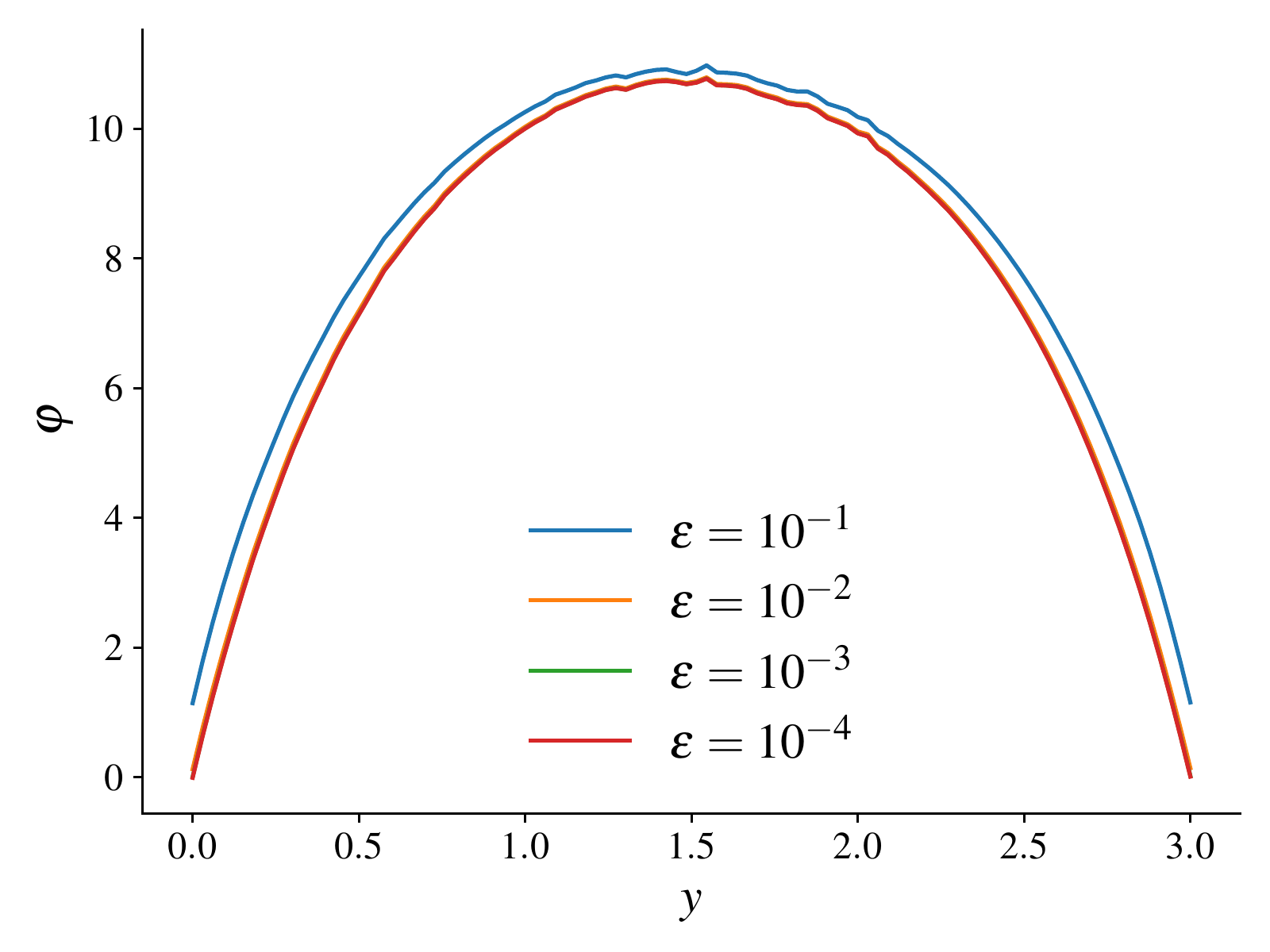,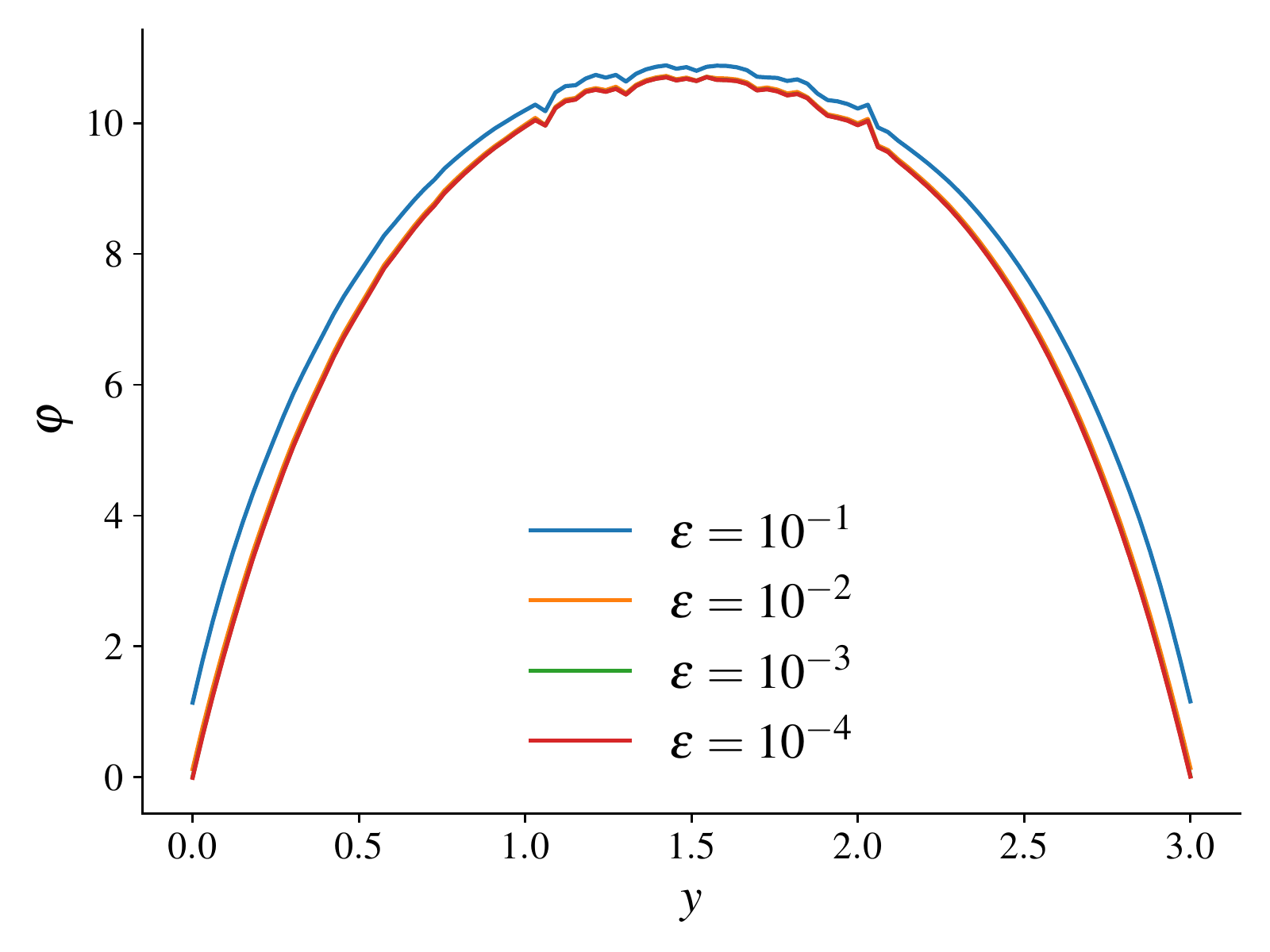,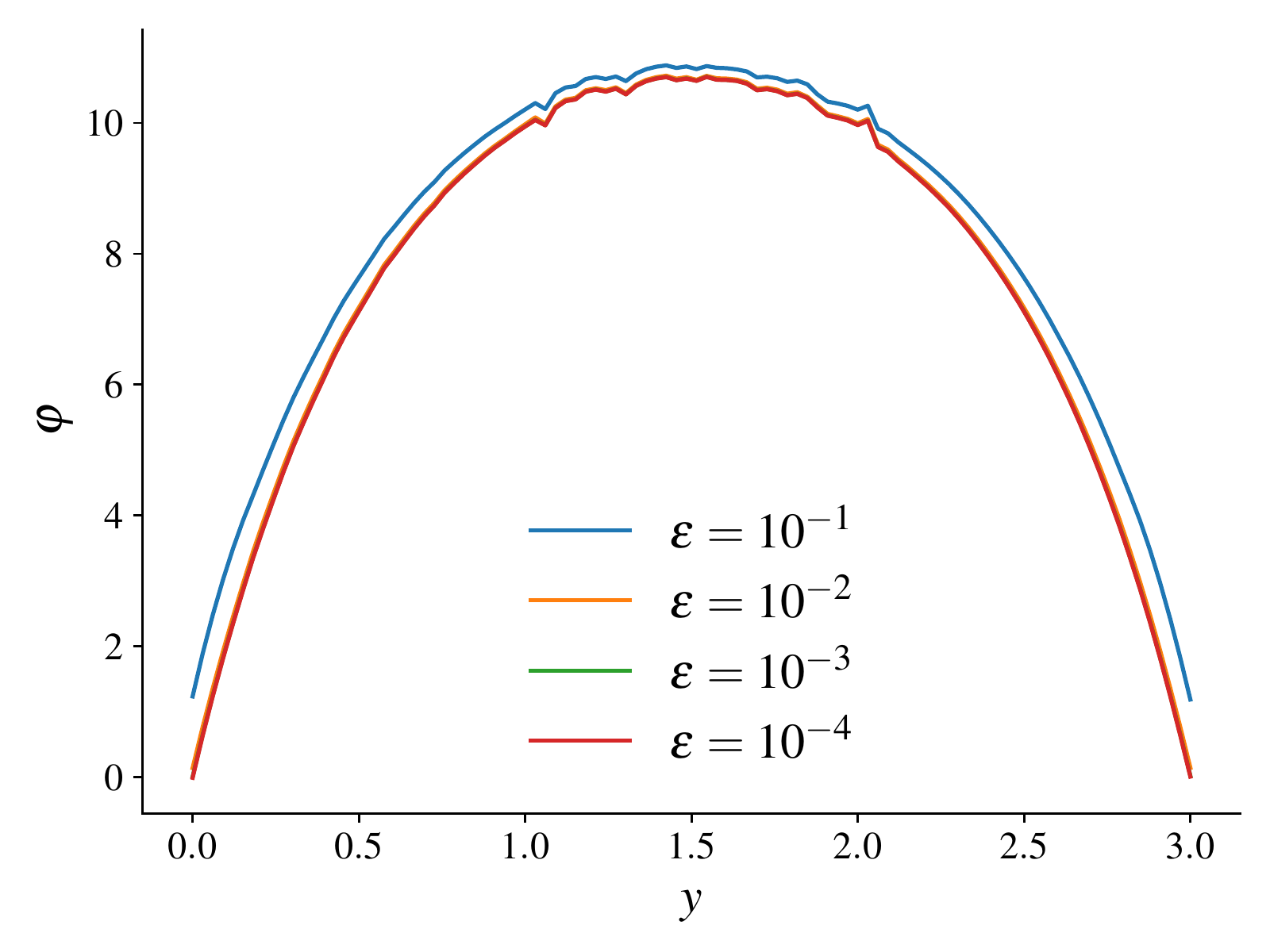}{
	\begin{subfigure}{.32\textwidth}
	\centering
	\includegraphics[width=\textwidth]{\f}
	\caption{}
	\end{subfigure}	
}
\caption{Lineouts of the 2D solution at $x=3.5$ as $\epsilon\rightarrow 0$ for the (a) H1, (b) RT, and (c) HRT methods on the triple point mesh. All methods produce non-trivial solutions even on the severely distorted triple point mesh. Non-monotonic oscillations are present in the solution due to mesh imprinting. }
\label{fig:eps_lineout_3p}
\end{figure}

\subsection{Solver Performance on Curved Meshes}
Here, we investigate the robustness of the preconditioned iterative solvers for the VEF linear systems on increasingly distorted meshes. The meshes were created by moving the interior control points of an initially orthogonal, third-order mesh according to the sine distortion: 
	\begin{equation}
		\x \rightarrow \x + \alpha\begin{bmatrix} 
			\sin(2\pi x) \sin(2\pi y)\\
			\sin(2\pi x) \sin(2\pi y)
		\end{bmatrix} \,,
	\end{equation}
where $\alpha$ controls the amount of distortion. When $\alpha=0$, the mesh is unchanged. The initial mesh was $16\times 16$ with $\D = [0,1]^2$. Meshes corresponding to a range of values of $\alpha$ are shown in Fig.~\ref{fig:sine_meshes}. Solver performance is evaluated on the first iteration of the thick diffusion limit problem introduced in the previous section. We use $\epsilon = 10^{-1}$. The number of BiCGStab iterations until convergence to a tolerance of $10^{-6}$ are shown for a range of mesh distortions in Table \ref{tab:curved_solvers} for the H1, RT, and HRT VEF methods. 
The H1 and RT methods use the lower block triangular preconditioner described in \S \ref{sec:solvers} while the HRT method is preconditioned with one V-cycle of AMG. 
The solver for the RT method did not converge in 250 iterations once the mesh became too distorted. The H1 discretization converged on all the meshes tested but the iteration counts varied between 46 and 69 whereas HRT was solved more uniformly, varying only between 7 and 11 iterations. This indicates the solvers for the RT method are sensitive to mesh distortion whereas HRT and, to a lesser extent, H1 are robust. 
Note that the meshes used in this section are small, allowing the unscalable H1 solver to still converge. 

% --- solver degradation on successively distorted meshes --- 
\begin{table}
\centering
\caption{Number of BiCGStab iterations until convergence on the first iteration of a thick diffusion limit problem with $\epsilon = 10^{-1}$ as the mesh distortion parameter increases. Here, H1, RT, and HRT rows refer to the $Y_p\times W_{p+1}$, $Y_p\times \RT_p$, and hybridized $Y_p\times \RT_p$ discretizations, respectively.}
\label{tab:curved_solvers}
\begin{tabular}{ccccccccc}
\toprule
$p$ & $\alpha$ & 0.000 & 0.025 & 0.050 & 0.060 & 0.070 & 0.080 \\
\midrule
\multirow{3}{*}{1} & H1 & 46 & 48 & 48 & 48 & 48 & 50 \\
 & RT & 20 & 22 & 26 & 31 & 72 & -- \\
 & HRT & 7 & 10 & 8 & 8 & 8 & 8 \\
\addlinespace
\multirow{3}{*}{2} & H1 & 59 & 61 & 52 & 55 & 54 & 57 \\
 & RT & 28 & 27 & 31 & -- & -- & -- \\
 & HRT & 11 & 10 & 9 & 9 & 10 & 9 \\
\addlinespace
\multirow{3}{*}{3} & H1 & 54 & 54 & 56 & 69 & 55 & 57 \\
 & RT & 29 & 28 & 41 & -- & -- & -- \\
 & HRT & 9 & 9 & 8 & 8 & 9 & 9 \\
\addlinespace
\multirow{3}{*}{4} & H1 & 51 & 55 & 55 & 66 & 57 & 61 \\
 & RT & 41 & 44 & 46 & 78 & -- & -- \\
 & HRT & 7 & 9 & 10 & 10 & 10 & 11 \\
\bottomrule
\multicolumn{8}{l}{-- indicates solver did not converge in 250 iterations.}
\end{tabular}
\end{table}

% --- selection of meshes --- 
\begin{figure}
\centering
\begin{subfigure}{.24\textwidth}
	\centering
	\includegraphics[width=\textwidth]{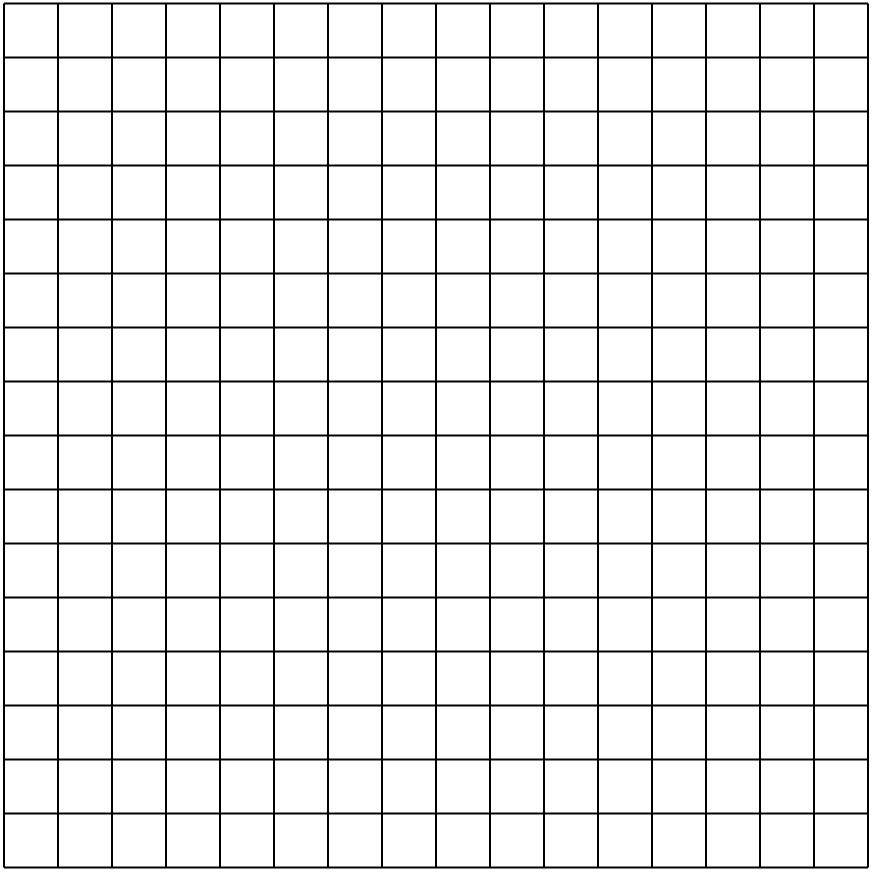}
	\caption{$\alpha = 0.000$}
\end{subfigure}\enspace
\begin{subfigure}{.24\textwidth}
	\centering
	\includegraphics[width=\textwidth]{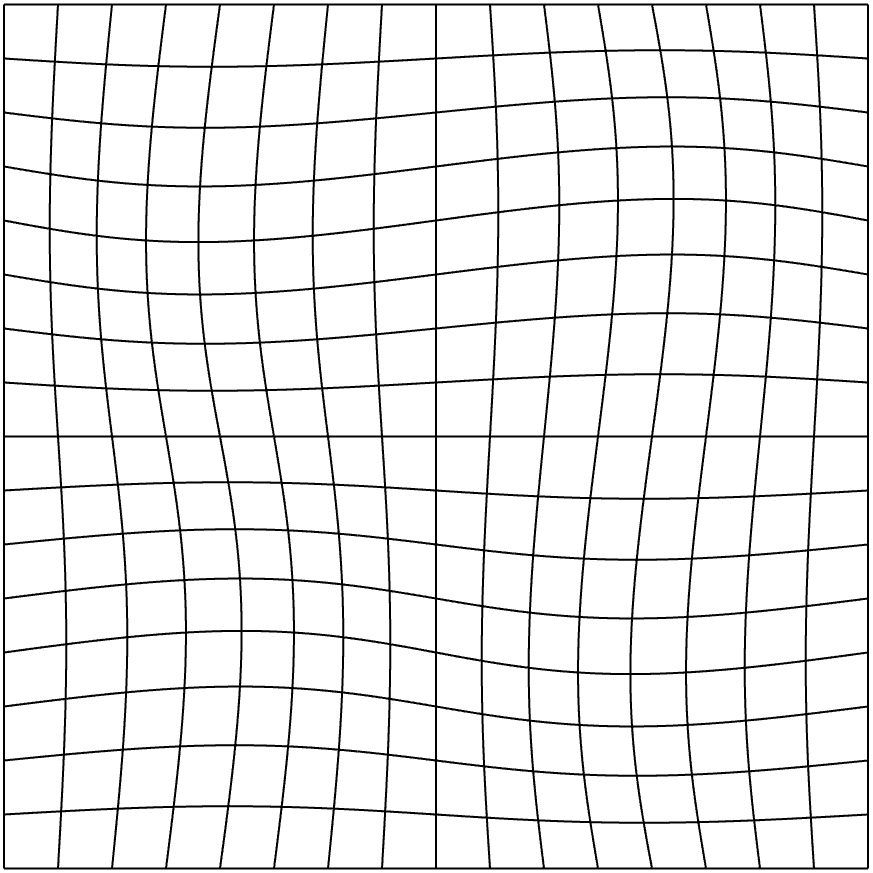}
	\caption{$\alpha = 0.025$}
\end{subfigure}\enspace
\begin{subfigure}{.24\textwidth}
	\centering
	\includegraphics[width=\textwidth]{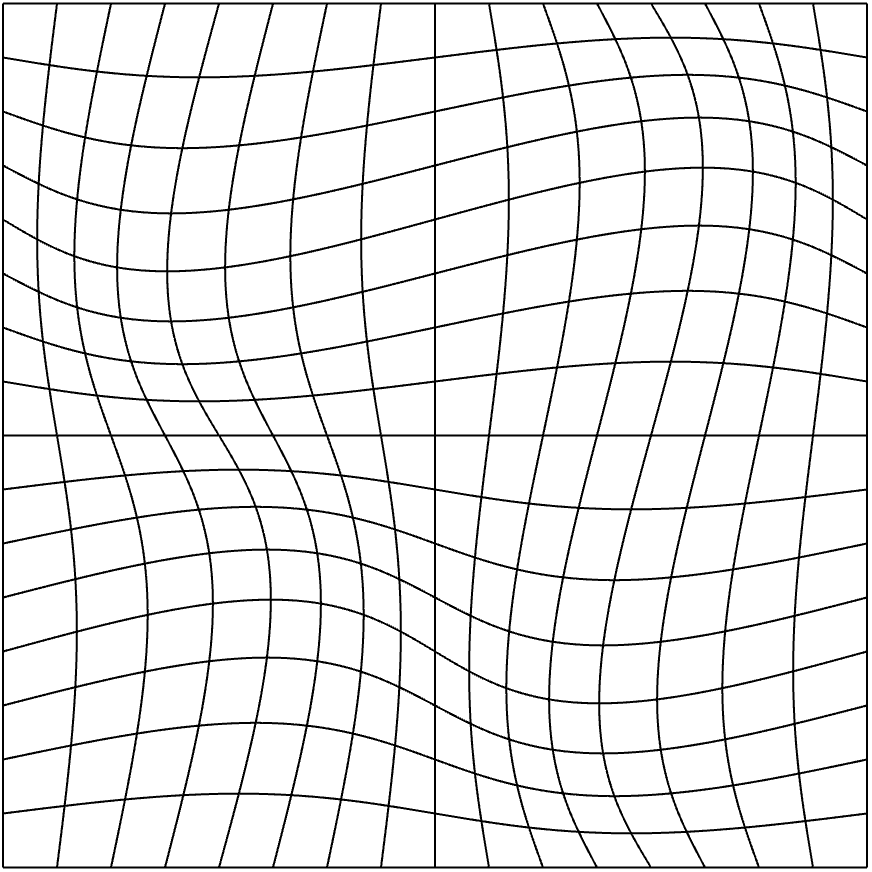}
	\caption{$\alpha = 0.060$}
\end{subfigure}\enspace
\begin{subfigure}{.24\textwidth}
	\centering
	\includegraphics[width=\textwidth]{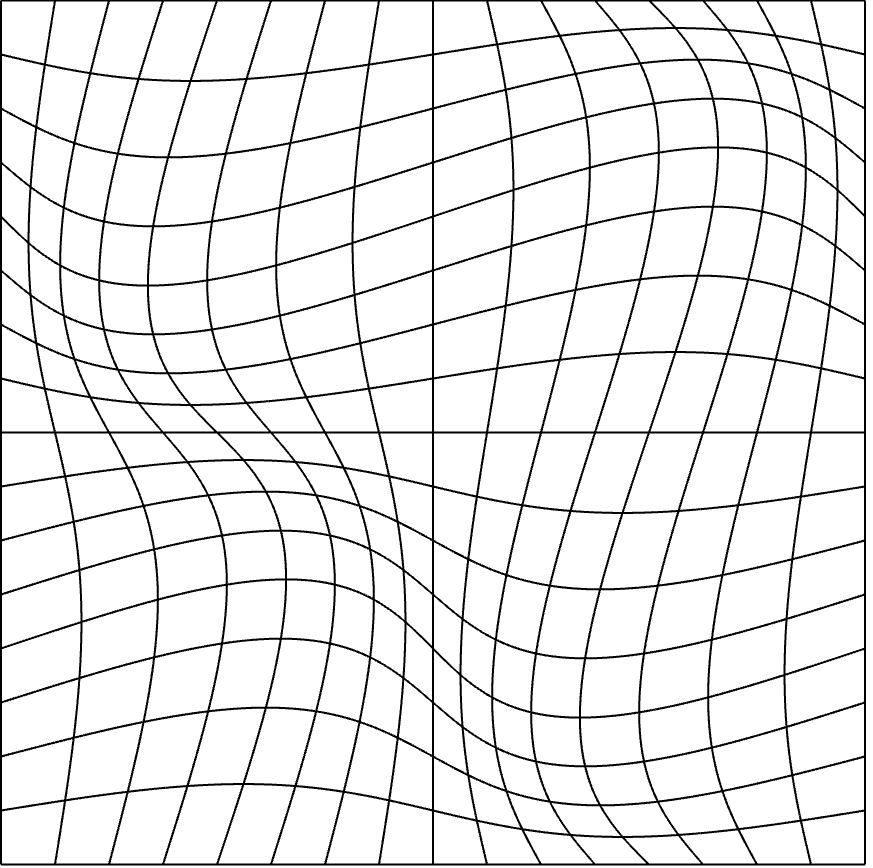}
	\caption{$\alpha = 0.080$}
\end{subfigure}
\caption{A selection of meshes generated by distorting a third-order, orthogonal $16\times 16$ mesh according to the sine distortion. The parameter $\alpha$ controls the amount of distortion. These meshes are used to assess linear solver robustness against mesh distortion. }
\label{fig:sine_meshes}
\end{figure}

\subsection{Linearized Crooked Pipe} \label{sec:cp}
We now show convergence in outer fixed-point iterations and inner preconditioned linear solver iterations on a more realistic, multi-material problem. The geometry and materials are shown in Fig.~\ref{fig:cp_diag}. The problem consists of two materials, the wall and the pipe, which have an 1000x difference in total interaction cross section. Time dependence is mocked by including artificial absorption and sources that correspond to backward Euler time integration. The time step is set so that $c\Delta t = 10^3$ and the initial condition is $\psi_0 = 10^{-4}$. The absorption and source are then $\sigma_a = 1/c\Delta t = 10^{-3} \si{\per\cm}$ and $q = \psi_0/c\Delta t = 10^{-1} \si{\per\cm\cubed\per\s\per\str}$. The boundary conditions are set so that isotropic inflow of magnitude $1/2\pi$ enters on the left entrance of the pipe with vacuum on all other surfaces. A Level Symmetric S$_{12}$ angular quadrature set is used. The quadratic programming negative flux fixup from \cite{YEE2020109696} is used inside the transport sweep to ensure positivity so that the VEF data are well defined. 
Timing data is presented as the minimum time recorded across five repeated runs. 
% --- crooked pipe geometry --- 
\begin{figure}
\centering
\includegraphics[width=.65\textwidth]{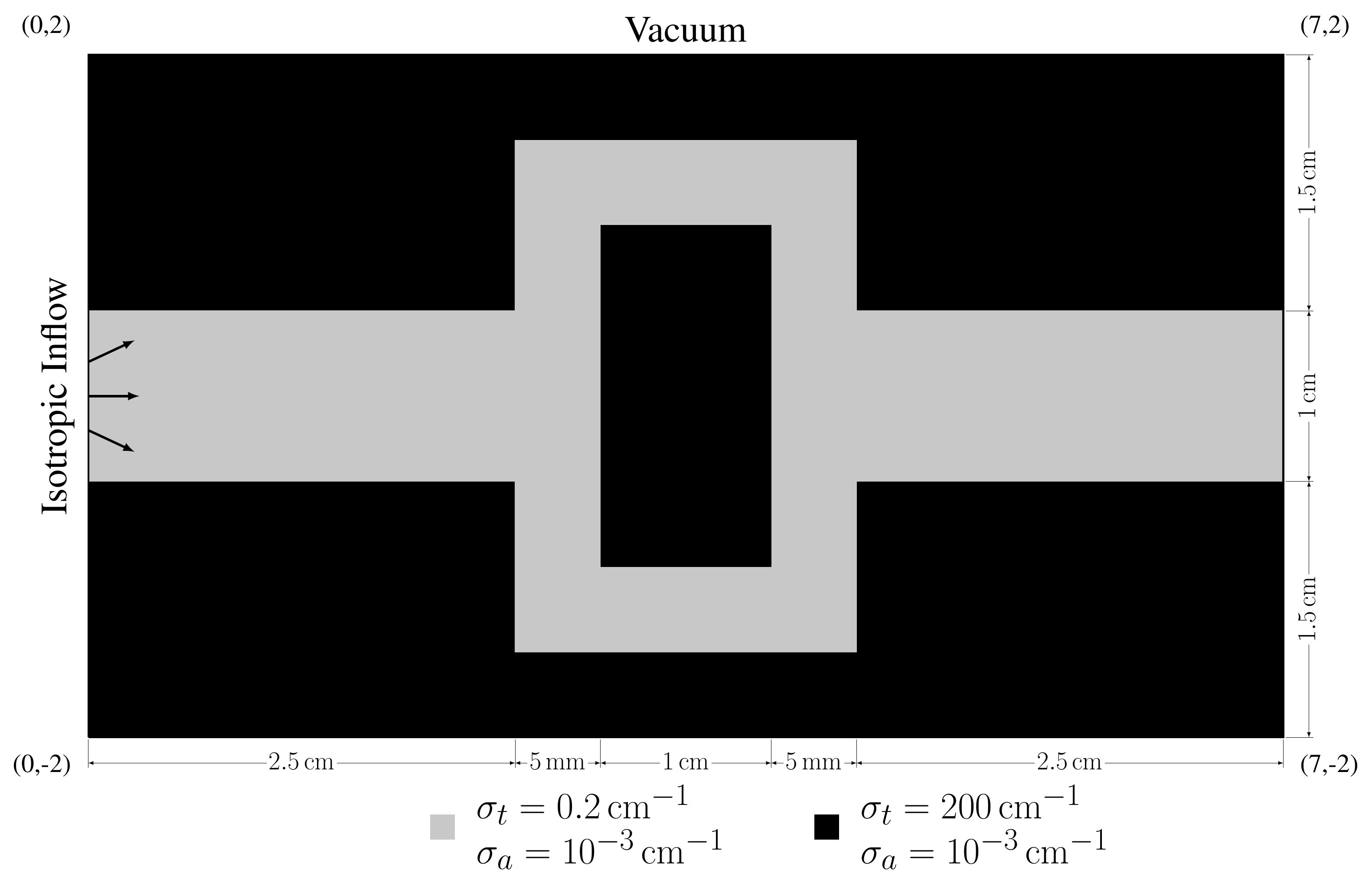}
\caption{The geometry, material data, and boundary conditions for the linearized crooked pipe problem. }
\label{fig:cp_diag}
\end{figure}

The outer fixed-point and inner linear iterative efficiencies are shown by refining in $h$ and $p$ on an orthogonal mesh. Anderson acceleration with two Anderson vectors is used. 
\begin{response}[red][anderson]
This small Anderson space was used only to provide more uniform convergence as the mesh was refined and polynomial degree increased. 
Anderson was not required for convergence on this problem. 
\end{response}
The previous outer iteration's solution is used as an initial guess for the inner solver so that the initial guess becomes progressively more accurate as the outer iteration converges. The outer tolerance is $10^{-6}$ and the inner BiCGStab tolerance is $10^{-8}$. 
The H1 and RT methods use the lower block triangular preconditioner described in \S\ref{sec:solvers} with one Jacobi iteration on the total interaction mass matrix and one AMG V-cycle on the lumped Schur complement. The HRT method is preconditioned using one V-cycle of AMG. 

Table \ref{tab:cp} shows the number of Anderson-accelerated fixed-point iterations to convergence and the maximum and average number of inner iterations performed across all outer iterations for the H1, RT, and HRT methods. The RT and HRT methods had equivalent convergence in outer iterations except for $p=2$ with three refinements where HRT required one fewer iteration than RT. H1 converged slower than the RT or HRT methods, requiring on average 159\% more iterations. 
The RT and HRT inner solvers were scalable in $h$ and $p$ while the H1 solvers were not. On the problems with two and three refinements in $h$, the H1 inner solver did not converge within 100 iterations on at least one of the solves for all values of $p$. 
The nested H1 iteration is only able to converge due to the use of the previous outer iteration as the initial guess for the inner iteration. 

Note that the H1 method is slower to converge the fixed-point problem even when the inner solver converged within the maximum allowed number of inner iterations. 
Table \ref{tab:cp_fixup} shows the average percentage of elements in the space-angle phase space that required application of the negative flux fixup \resp[red][fixup]{averaged over the entire iteration}. 
RT and HRT had similar reliance on the fixup. On the coarsest problems in $h$ -- where the fixup is needed most due to lack of numerical resolution -- the H1 method produced significantly more negativities which led to 1.6x times more elements needing the fixup for $p=2$ and $p=3$. On the most refined problems, 5-10\% more elements were fixed up for the H1 method. 
This increased reliance on the negative flux fixup is likely the source of the H1 method's reduced efficiency on problems where the inner iteration completed successfully at each outer iteration. 
The H1 method's proclivity for producing negativities within the transport sweep is indicative of poorer solution quality compared to the RT and HRT discretizations of the VEF system. 
This reduced solution quality is investigated in \S \ref{sec:badmodes}. 

% --- crooked pipe hp scaling --- 
\begin{table}
\centering
\caption{The number of outer Anderson-accelerated fixed-point iterations until convergence along with the maximum and average number of inner BiCGStab iterations until convergence on the linearized crooked pipe problem. Two Anderson vectors were used. The H1, RT, and HRT columns refer to the $Y_p\times W_{p+1}$, $Y_p\times \RT_p$, and hybridized $Y_p\times \RT_p$ discretizations, respectively. The H1 and RT methods were preconditioned with a block lower triangular preconditioner with AMG applied to the lumped Schur complement. HRT was preconditioned with AMG. The previous outer iteration's solution was used as the initial guess for the inner iteration. }
\label{tab:cp}
\begin{tabular}{cccccccccccccc}
\toprule
 &  & \multicolumn{3}{c}{Outer}  &  & \multicolumn{3}{c}{Max Inner}  &  & \multicolumn{3}{c}{Avg.~Inner} \\
\cmidrule{3-5}\cmidrule{7-9}\cmidrule{11-13}
 & $N_e$ & H1 & RT & HRT & & H1 & RT & HRT & & H1 & RT & HRT \\
\midrule
\multirow{4}{*}{\rotatebox{90}{$p=1$}} & 112 & 16 & 13 & 13 & & 37 & 16 & 6 & & 22.62 & 10.77 & 4.69 \\
 & 448 & 20 & 13 & 13 & & 66 & 18 & 7 & & 38.00 & 12.23 & 4.92 \\
 & 1792 & 25$^*$ & 15 & 15 & & 100 & 19 & 8 & & 54.44 & 12.27 & 5.27 \\
 & 7168 & 23$^*$ & 16 & 16 & & 100 & 20 & 8 & & 71.22 & 12.88 & 5.38 \\
\addlinespace
\multirow{4}{*}{\rotatebox{90}{$p=2$}} & 112 & 23 & 13 & 13 & & 47 & 27 & 10 & & 26.22 & 16.92 & 7.15 \\
 & 448 & 25 & 15 & 15 & & 75 & 26 & 10 & & 39.68 & 17.20 & 7.20 \\
 & 1792 & 27$^*$ & 16 & 16 & & 100 & 26 & 11 & & 55.44 & 18.25 & 7.44 \\
 & 7168 & 24$^*$ & 16 & 15 & & 100 & 29 & 14 & & 69.67 & 19.38 & 8.67 \\
\addlinespace
\multirow{4}{*}{\rotatebox{90}{$p=3$}} & 112 & 23 & 14 & 14 & & 50 & 24 & 10 & & 26.65 & 16.71 & 6.50 \\
 & 448 & 25 & 15 & 15 & & 73 & 29 & 10 & & 42.44 & 17.80 & 6.33 \\
 & 1792 & 26$^*$ & 16 & 16 & & 100 & 28 & 10 & & 55.62 & 18.56 & 6.44 \\
 & 7168 & 28$^*$ & 17 & 17 & & 100 & 31 & 11 & & 65.50 & 19.41 & 6.53 \\
\bottomrule
\multicolumn{13}{l}{$^*$\,indicates at least one inner solve did not converge in 100 iterations. }
\end{tabular}
\end{table}
% --- crooked pipe fixup usage --- 
\begin{table}
\centering
\caption{The average number of elements in the space-angle phase space that required application of the negative flux fixup in the transport sweep on the linearized crooked pipe problem. }
\label{tab:cp_fixup}
\begin{tabular}{cccccc}
\toprule
 & $N_e$ & H1 & RT & HRT \\
\midrule
\multirow{4}{*}{\rotatebox{90}{$p=1$}} & 112 & 21.055 & 21.063 & 21.073 \\
 & 448 & 6.331 & 5.043 & 5.042 \\
 & 1792 & 2.626 & 2.282 & 2.283 \\
 & 7168 & 1.432 & 1.295 & 1.295 \\
\addlinespace
\multirow{4}{*}{\rotatebox{90}{$p=2$}} & 112 & 26.123 & 16.314 & 16.313 \\
 & 448 & 7.974 & 5.691 & 5.692 \\
 & 1792 & 3.300 & 2.796 & 2.796 \\
 & 7168 & 1.795 & 1.661 & 1.646 \\
\addlinespace
\multirow{4}{*}{\rotatebox{90}{$p=3$}} & 112 & 29.224 & 20.285 & 20.285 \\
 & 448 & 9.529 & 7.037 & 7.037 \\
 & 1792 & 4.150 & 3.815 & 3.818 \\
 & 7168 & 2.247 & 2.132 & 2.132 \\
\bottomrule
\end{tabular}
\end{table}

Table \ref{tab:cp_vef} shows the average costs per outer iteration of assembling and solving the VEF system. The H1 method is the cheapest to assemble but the most expensive to solve. This is due to the H1 method's simpler left hand side that does not require assembly over interior faces in the mesh. Since the H1 solvers were not scalable, H1 was around 4x more expensive than RT on the problems most refined in $h$. 
Compared to RT, the HRT assembly includes the additional cost of forming the reduced system via element-local, dense matrix operations. Thus, the HRT method has the highest assembly cost, especially for large $p$. 
However, HRT was the cheapest to solve at each outer iteration since BiCGStab is applied to the reduced system. 
The HRT preconditioner is cheaper than the lower block triangular preconditioner and the HRT reduced system is positive definite whereas the RT system is indefinite. 
These benefits led to both faster BiCGStab convergence and cheaper cost per iteration resulting in dramatically reduced solve times compared to RT: on the most refined problems in $h$, RT was 3x, 8x, and 18x more expensive to solve than HRT for $p=1$, $p=2$, and $p=3$, respectively. 
% --- crooked pipe VEF timing breakdown --- 
\begin{table}
\centering
\caption{The average time spent per outer iteration assembling and solving the VEF systems on $hp$ refinements of the linearized crooked pipe problem. Times are presented in seconds and represent the minimum time achieved across five repeated runs for each value of $h$ and $p$.}
\label{tab:cp_vef}
\begin{tabular}{cccccccccc}
\toprule
 &  & \multicolumn{3}{c}{VEF Assembly Time (s)}  &  & \multicolumn{3}{c}{VEF Solve Time (s)} \\
\cmidrule{3-5}\cmidrule{7-9}
 & $N_e$ & H1 & RT & HRT & & H1 & RT & HRT \\
\midrule
\multirow{4}{*}{\rotatebox{90}{$p=1$}} & 112 & 0.0155 & 0.0221 & 0.0225 & & 0.0098 & 0.0050 & 0.0023 \\
 & 448 & 0.0408 & 0.0708 & 0.0725 & & 0.0603 & 0.0223 & 0.0089 \\
 & 1792 & 0.1191 & 0.2483 & 0.2548 & & 0.3366 & 0.0923 & 0.0363 \\
 & 7168 & 0.3990 & 0.9174 & 0.9541 & & 1.7994 & 0.3876 & 0.1457 \\
\addlinespace
\multirow{4}{*}{\rotatebox{90}{$p=2$}} & 112 & 0.0272 & 0.0400 & 0.0426 & & 0.0436 & 0.0234 & 0.0035 \\
 & 448 & 0.0792 & 0.1333 & 0.1460 & & 0.2680 & 0.0983 & 0.0142 \\
 & 1792 & 0.2637 & 0.4965 & 0.5550 & & 1.4964 & 0.4367 & 0.0589 \\
 & 7168 & 1.0182 & 1.9546 & 2.1516 & & 8.4629 & 2.0172 & 0.2510 \\
\addlinespace
\multirow{4}{*}{\rotatebox{90}{$p=3$}} & 112 & 0.0500 & 0.0764 & 0.0947 & & 0.1244 & 0.0653 & 0.0052 \\
 & 448 & 0.1568 & 0.2764 & 0.3598 & & 0.8199 & 0.2960 & 0.0213 \\
 & 1792 & 0.5698 & 1.0642 & 1.3765 & & 4.9136 & 1.3698 & 0.0862 \\
 & 7168 & 2.4576 & 4.4729 & 5.5667 & & 25.1319 & 6.2214 & 0.3479 \\
\bottomrule
\end{tabular}
\end{table}

The total time to solve the fixed-point problems for each of the methods is presented in Table \ref{tab:cp_total} along with the breakdown of the total cost into time spent in the inversion of the streaming and collision operator and forming and solving the VEF system. 
Here, it can be seen that HRT's higher assembly costs are sufficiently balanced by reduced solve costs such that HRT spent the least time in the VEF portion of the algorithm for all values of $h$ and $p$.
In particular, for the most refined problem with $p=3$, forming and solving the HRT system was more than twice as fast as the RT method. 
% The H1 method was approximately twice as slow as RT due to its lower iterative efficiency in the outer iteration and lack of scalable preconditioners for the inner iteration. 
Overall, the HRT method was the fastest to solve the fixed-point problems. However, due to the high cost of the transport sweep relative to forming and solving the VEF system, the variance in cost between the RT and HRT methods was less pronounced with RT being at most 1.18x more expensive despite the HRT inner solve being twice as fast as the RT solve. H1 was the most expensive method due to its slower outer and inner iterative efficiency and its higher reliance on the negative flux fixup. 

% --- crooked pipe total time breakdown --- 
\begin{table}
\centering
\caption{The total runtime along with the total time spent in the transport sweep and VEF portions of the algorithm. Times are presented in seconds and represent the minimum time achieved across five repeated runs for each value of $h$ and $p$. }
\label{tab:cp_total}
\begin{adjustbox}{max width=1.1\textwidth,center}
\begin{tabular}{cccccccccccccc}
\toprule
 &  & \multicolumn{3}{c}{Total Time (s)}  &  & \multicolumn{3}{c}{Sweep Time (s)}  &  & \multicolumn{3}{c}{VEF Time (s)} \\
\cmidrule{3-5}\cmidrule{7-9}\cmidrule{11-13}
 & $N_e$ & H1 & RT & HRT & & H1 & RT & HRT & & H1 & RT & HRT \\
\midrule
\multirow{4}{*}{\rotatebox{90}{$p=1$}} & 112 & 3.47 & 2.88 & 2.79 & & 2.87 & 2.36 & 2.36 & & 0.50 & 0.41 & 0.33 \\
 & 448 & 14.62 & 9.69 & 9.34 & & 11.90 & 7.92 & 7.90 & & 2.35 & 1.39 & 1.07 \\
 & 1792 & 69.90 & 41.85 & 40.16 & & 55.85 & 34.54 & 34.26 & & 12.60 & 5.79 & 4.44 \\
 & 7168 & 264.52 & 175.63 & 168.00 & & 204.34 & 145.85 & 143.85 & & 54.50 & 23.58 & 18.04 \\
\addlinespace
\multirow{4}{*}{\rotatebox{90}{$p=2$}} & 112 & 8.62 & 4.91 & 4.55 & & 6.57 & 3.80 & 3.80 & & 1.90 & 0.97 & 0.61 \\
 & 448 & 35.90 & 20.18 & 18.69 & & 25.77 & 15.63 & 15.71 & & 9.59 & 4.01 & 2.44 \\
 & 1792 & 161.03 & 84.59 & 77.42 & & 107.84 & 65.20 & 65.18 & & 51.01 & 17.13 & 10.05 \\
 & 7168 & 634.11 & 344.51 & 291.24 & & 385.29 & 263.46 & 245.23 & & 239.62 & 72.04 & 37.10 \\
\addlinespace
\multirow{4}{*}{\rotatebox{90}{$p=3$}} & 112 & 18.17 & 10.85 & 9.87 & & 13.29 & 8.21 & 8.17 & & 4.61 & 2.37 & 1.41 \\
 & 448 & 82.09 & 45.23 & 40.63 & & 54.36 & 34.01 & 33.68 & & 26.61 & 10.09 & 5.81 \\
 & 1792 & 378.46 & 191.25 & 169.54 & & 222.79 & 141.46 & 140.99 & & 151.09 & 45.21 & 23.94 \\
 & 7168 & 1828.29 & 858.09 & 743.84 & & 997.93 & 630.32 & 621.21 & & 810.42 & 208.14 & 102.94 \\
\bottomrule
\end{tabular}
\end{adjustbox}
\end{table}

\subsection{Eigenvalue Problem} \label{sec:badmodes}
It was observed that the H1 discretization exhibited poor solution quality in under resolved problems and could not be scalably solved using block preconditioners. 
Here, we investigate the presence of so-called ``checkerboard'' modes that are allowed by the H1 discretization. These modes are not physical, contaminate solution quality, and degrade the effectiveness of AMG. 
To investigate this issue, we consider the following eigenvalue problem: 
	\begin{subequations}
	\begin{equation}
		-\nabla^2 u = \lambda u \,, \quad \x \in \D\,, 
	\end{equation}
	\begin{equation}
		u = 0 \,, \quad \x \in \partial \D \,,
	\end{equation}
	\end{subequations}
with $\D = [0,1]^2$. 
The exact solutions are 
	\begin{equation}
		u = \sin(k_x \pi x) \sin(k_y \pi y) \,, \quad \lambda = \pi^2(k_x^2 + k_y^2) \,. 
	\end{equation}
The $Y_1 \times W_2$ discretization's lumped Schur complement is used to discretize this problem as: find $u \in Y_1$ such that 
	\begin{equation}
		\tmat{S} \fevec{u} = \lambda \mat{M} \fevec{u} \,,
	\end{equation}
where $\mat{M}$ is the $Y_1$ mass matrix and $\tmat{S}$ is the lumped Schur complement defined in Eq.~\ref{eq:lumped_schur}. The Locally Optimal Block Preconditioned Conjugate Gradient (LOBPCG) solver from \emph{hypre} was used to solve for the five smallest eigenvalues and their corresponding eigenvectors on this problem. The H1 discretization correctly produced the first four smallest eigenvalues and associated eigenvectors but found the high-frequency, checkerboard mode shown in Fig.~\ref{fig:badmode} for the fifth. This checkerboard mode corresponded to a non-physically degenerate eigenvalue of $8\pi^2$. 
The presence of this mode indicates the $Y_p \times W_{p+1}$ discretization allows non-physical, spurious modes that are slowly decaying and high frequency. Such modes are slow to remove with relaxation and also cannot be accurately represented on a coarser grid, meaning AMG will not be an effective preconditioner. 
Furthermore, the presence of these oscillatory modes can degrade solution quality in underresolved problems leading to increased negativities in the VEF scalar flux. 
We note that the lumped Schur complement associated with the RT discretization does not contain these non-physical modes and can thus be effectively preconditioned by AMG. 

% --- bad mode plot --- 
\begin{figure}
\centering
\includegraphics[width=.5\textwidth]{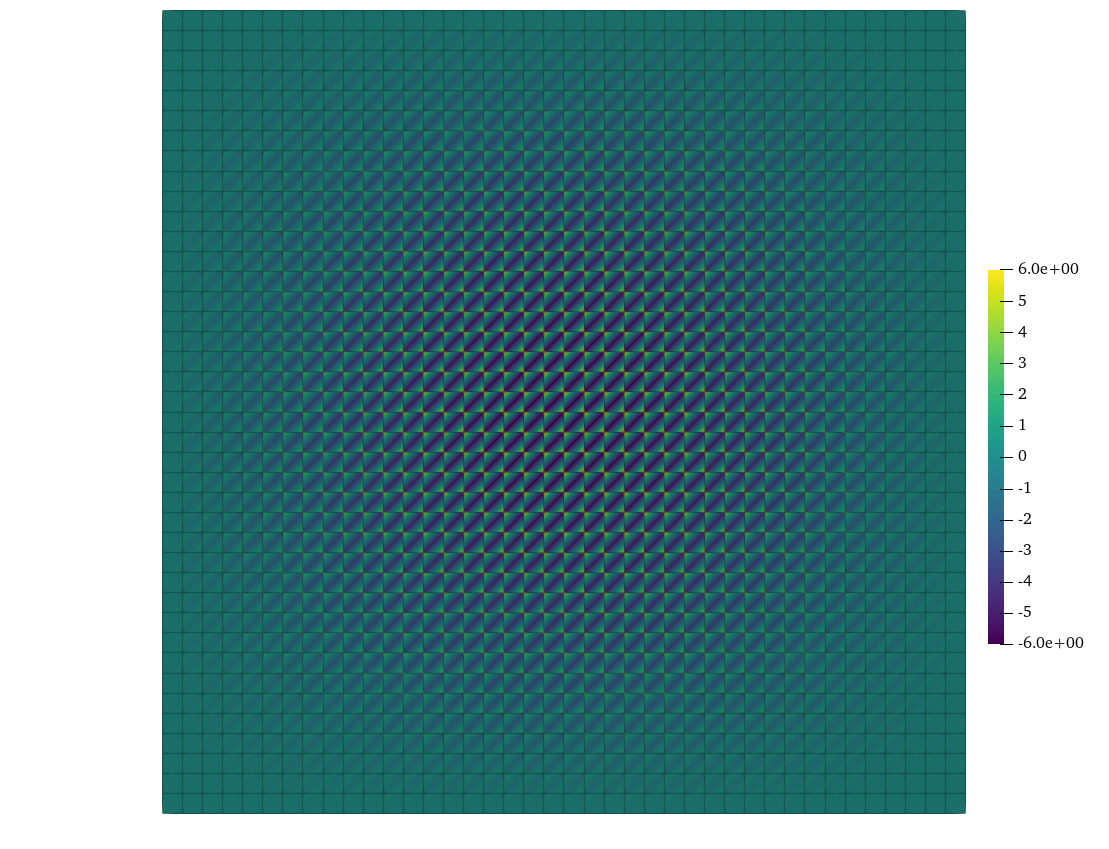}
\caption{A depiction of an eigenmode corresponding to an eigenvalue of $8\pi^2$ of the Poisson eigenvalue problem discretized with the $Y_1\times W_2$ discretization's lumped Schur complement. For this eigenvalue, the exact solution is $\sin(2\pi x)\sin(2\pi y)$ meaning this mode is spurious. The presence of high-frequency spurious modes in the $Y_p\times W_{p+1}$ discretization's lumped Schur complement degrades the effectiveness of AMG and thus the performance of the block preconditioners used to solve the full $Y_p\times W_{p+1}$ discretization.}
\label{fig:badmode}
\end{figure}

\subsection{Weak Scaling} \label{sec:weak}
Finally, we show that the RT and HRT methods weak scale in parallel on the first iteration of the linearized crooked pipe problem from \S\ref{sec:cp}. The inversion of the streaming and collision operator is approximated with one iteration of a parallel block Jacobi sweep where each processor performs a transport sweep on its processor-local domain using angular fluxes on inflow processor boundaries that are iteratively lagged. Due to this, the transport sweep is not exact when more than one processor is used. 
Uniform refinements are used in tandem with increasing the processor count by four so that the number of unknowns per processor remains constant. 
Since the sizes of the systems corresponding to the RT and HRT methods differ, the data are tabulated in terms of the number of scalar flux degrees of freedom. 
We stress that the linear system for the RT method additionally includes the degrees of freedom associated with each component of the current and that the HRT system solves the reduced problem for the Lagrange multiplier unknowns only. 
The results were generated on 29 nodes of the \texttt{rztopaz} machine at LLNL which has two 18-core Intel Xeon E5-2695 CPUs per node. 
Timing data is presented as the minimum time achieved across three repeated runs. 
We compare the efficiency and performance of the inner solvers for the RT and HRT discretizations with $p=2$ when 1) the parallel block Jacobi transport sweep is used to compute the VEF data and 2) the VEF data are set to their asymptotic, diffusive values of $\E = 1/3\I$ and $E_b = 1/2$. 
The BiCGStab tolerance was $10^{-8}$. 

Table \ref{tab:weak} compares the number of iterations to convergence and solve times for solving the RT VEF and RT diffusion linear systems. For the VEF system, a range of preconditioner options are presented. The preconditioners are parameterized by the use of one iteration of Jacobi (J) or Gauss-Seidel (GS) for approximating the inverse of the total interaction mass matrix and the number of AMG V-cycles applied to the lumped Schur complement at each preconditioner application. We consider the permutations of using Jacobi, Gauss-Seidel, one AMG V-cycle, and three AMG V-cycles. The RT diffusion system is preconditioned by one iteration of Jacobi on the total interaction mass matrix and one AMG V-cycle on the lumped Schur complement (i.e.~the ``J-1'' preconditioner). 
The J-1 preconditioner led to a scalable solver for the diffusion system with an increase of only 6 iterations when the problem size was increased by a factor of 1024. 
However, the J-1 preconditioner did not perform as uniformly when applied to the VEF system showing an increase of 14 iterations over the same range of problem sizes. Increasing the number of AMG V-cycles per preconditioner application to three led to an algorithm that scaled more closely to the diffusion case. 
Switching to Gauss-Seidel for approximating the inverse of the total interaction mass matrix did reduce the total number of iterations to convergence but did not alter the scaling of iterations with problem size seen with the J-1 preconditioner. The GS-3 option required the least iterations to converge and iteration counts scaled similarly to that of the J-1 preconditioner applied to the diffusion system. 
These results suggest that scalability can be improved by performing more AMG V-cycles on the lumped Schur complement. However, using a more expensive approximation of the inverse of the total mass matrix was less effective. 
In terms of solve time, the J-1 preconditioner was the fastest. 
In other words, the increased robustness in iteration count to problem size provided by the more expensive preconditioners did not adequately balance their increased cost per iteration. 
For the largest problem size, the J-1 preconditioner applied to the VEF system was only 1.23x more expensive than solving the diffusion system with the J-1 preconditioner. 
% --- weak scaling RT --- 
\begin{table}
\centering
\caption{A weak scaling study of the first iteration of the linearized crooked pipe problem using $p=2$ for the RT discretization preconditioned with a lower block triangular preconditioner. The number of BiCGStab iterations to converge to a tolerance of $10^{-8}$ and the total solve time are presented. Solving the linear systems corresponding to RT VEF and RT diffusion are compared with the RT VEF system preconditioned using a range of preconditioners corresponding to the use of Jacobi (J) or Gauss-Seidel (GS) to approximate the inverse of the total interaction mass matrix and the application of one or three AMG V-cycles per iteration to approximate the inverse of the lumped Schur complement. The diffusion system is solved using the J-1 preconditioner. Times are provided in seconds and represent the minimum time achieved over three repeated runs.}
\label{tab:weak}
\begin{adjustbox}{max width=1.1\textwidth,center}
\begin{tabular}{cccccccccccccc}
\toprule
 &  & \multicolumn{5}{c}{Iterations}  &  & \multicolumn{5}{c}{Solve Time (s)} \\
\cmidrule{3-7}\cmidrule{9-13}
Processors & DOF & J-1 & J-3 & GS-1 & GS-3 & Diffusion & & J-1 & J-3 & GS-1 & GS-3 & Diffusion \\
\midrule
1 & \num{42588} & 29 & 18 & 24 & 14 & 27 & & 1.82 & 2.51 & 1.83 & 2.22 & 1.68 \\
4 & \num{170352} & 31 & 20 & 24 & 15 & 29 & & 2.18 & 3.24 & 2.08 & 2.74 & 2.03 \\
16 & \num{681408} & 34 & 20 & 27 & 17 & 29 & & 3.31 & 4.41 & 3.11 & 4.15 & 2.83 \\
64 & \num{2725632} & 37 & 21 & 33 & 19 & 30 & & 5.67 & 7.10 & 5.98 & 6.75 & 4.72 \\
256 & \num{10902528} & 38 & 23 & 33 & 21 & 31 & & 5.98 & 7.87 & 6.50 & 8.19 & 4.91 \\
1024 & \num{43610112} & 43 & 25 & 38 & 22 & 33 & & 7.69 & 9.66 & 9.52 & 9.46 & 6.22 \\
\bottomrule
\end{tabular}
\end{adjustbox}
\end{table}

This comparison is repeated for the HRT method in Table \ref{tab:weak_hrt}. Here, we compare only the use of one AMG V-cycle to precondition the reduced system corresponding to the VEF and diffusion problems. Solving the VEF system required at most 4 more iterations compared to solving diffusion. Solving the largest VEF problem was 1.19x more expensive than the largest diffusion problem. Compared to RT, the HRT VEF solve was up to 13x faster. 
\resp[red][hrtproblems]{We note that, while HRT was significantly faster to solve, it may not be possible to form the reduced system for the HRT Lagrange multiplier in a matrix-free manner. Such difficulties have motivated alternate approaches to solving mixed finite element radiation diffusion at LLNL \cite{pazner2022loworder}.} 
% --- weak scaling HRT --- 
\begin{table}
\centering
\caption{Weak scaling the HRT discretization of VEF and diffusion on the linearized crooked pipe with $p=2$. Both problems used one AMG V-cycle to precondition the HRT reduced system. The number of BiCGStab iterations to converge to a tolerance of $10^{-8}$ and the total solve times in seconds are presented. The timing data is represents the minimum time recorded over three repeated runs.}
\label{tab:weak_hrt}
\begin{tabular}{cccccccc}
\toprule
 &  & \multicolumn{2}{c}{Iterations}  &  & \multicolumn{2}{c}{Solve Time (s)} \\
\cmidrule{3-4}\cmidrule{6-7}
Processors & DOF & VEF & Diffusion & & VEF & Diffusion \\
\midrule
1 & \num{42588} & 11 & 10 & & 0.14 & 0.12 \\
4 & \num{170352} & 13 & 12 & & 0.18 & 0.16 \\
16 & \num{681408} & 14 & 12 & & 0.28 & 0.25 \\
64 & \num{2725632} & 16 & 12 & & 0.47 & 0.36 \\
256 & \num{10902528} & 15 & 12 & & 0.48 & 0.39 \\
1024 & \num{43610112} & 16 & 13 & & 0.61 & 0.51 \\
\bottomrule
\end{tabular}
\end{table}

Figures \ref{fig:weakeff_rt} and \ref{fig:weakeff_hrt} show the weak scaling efficiency for the RT and HRT discretizations, respectively. Weak scaling efficiency is defined as 
	\begin{equation}
		\varepsilon_n = \frac{\text{solve time with one processor}}{\text{solve time with $n$ processors}} \,, 
	\end{equation}
where ideal scaling is $\varepsilon_n = 1$. 
\resp[red][weakscale]{Note that ideal scaling is not expected due to the unavoidable communication costs associated with solving elliptic problems. 
Furthermore, the parallel decomposition is chosen to accomodate the significantly more expensive transport sweep. This results in VEF linear systems with a small number of unknowns per processor, leading to reduced weak scaling efficiency in the linear solve from a suboptimal ratio of parallel communication to computation.}
Ideal scaling is not expected since solving these linear systems requires parallel communication. 
For both the RT and HRT methods, the VEF system can be scalably solved with comparable efficiency to solving the corresponding diffusion problem. In particular, for the problem size solved with 1024 processors, the RT VEF preconditioners scaled with efficiencies of 23.6\%, 26.0\%, 19.1\%, and 23.4\% for the J-1, J-3, GS-1, and GS-3 preconditioners, respectively, while J-1 applied to diffusion scaled with an efficiency of 27.0\%. This suggests that the J-3 preconditioner may scale more robustly despite being more expensive than J-1. 
For HRT, the efficiencies were 22.9\% and 24.0\% for solving the VEF and diffusion systems, respectively. 
% --- weak scaling efficiencies --- 
\begin{figure}
\centering
\begin{subfigure}{.49\textwidth}
	\centering
	\includegraphics[width=\textwidth]{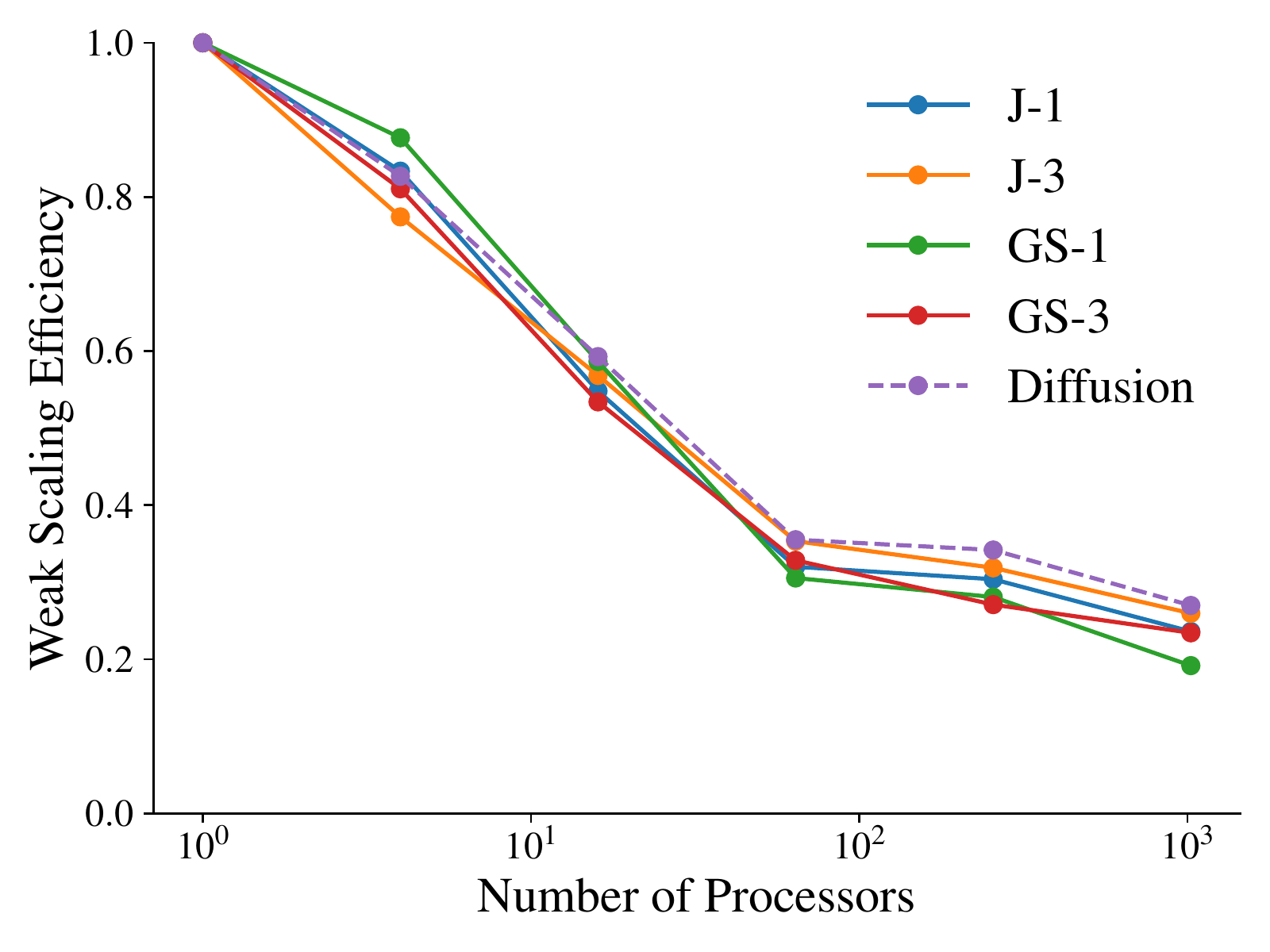}
	\caption{}
	\label{fig:weakeff_rt}
\end{subfigure}
\begin{subfigure}{.49\textwidth}
	\centering
	\includegraphics[width=\textwidth]{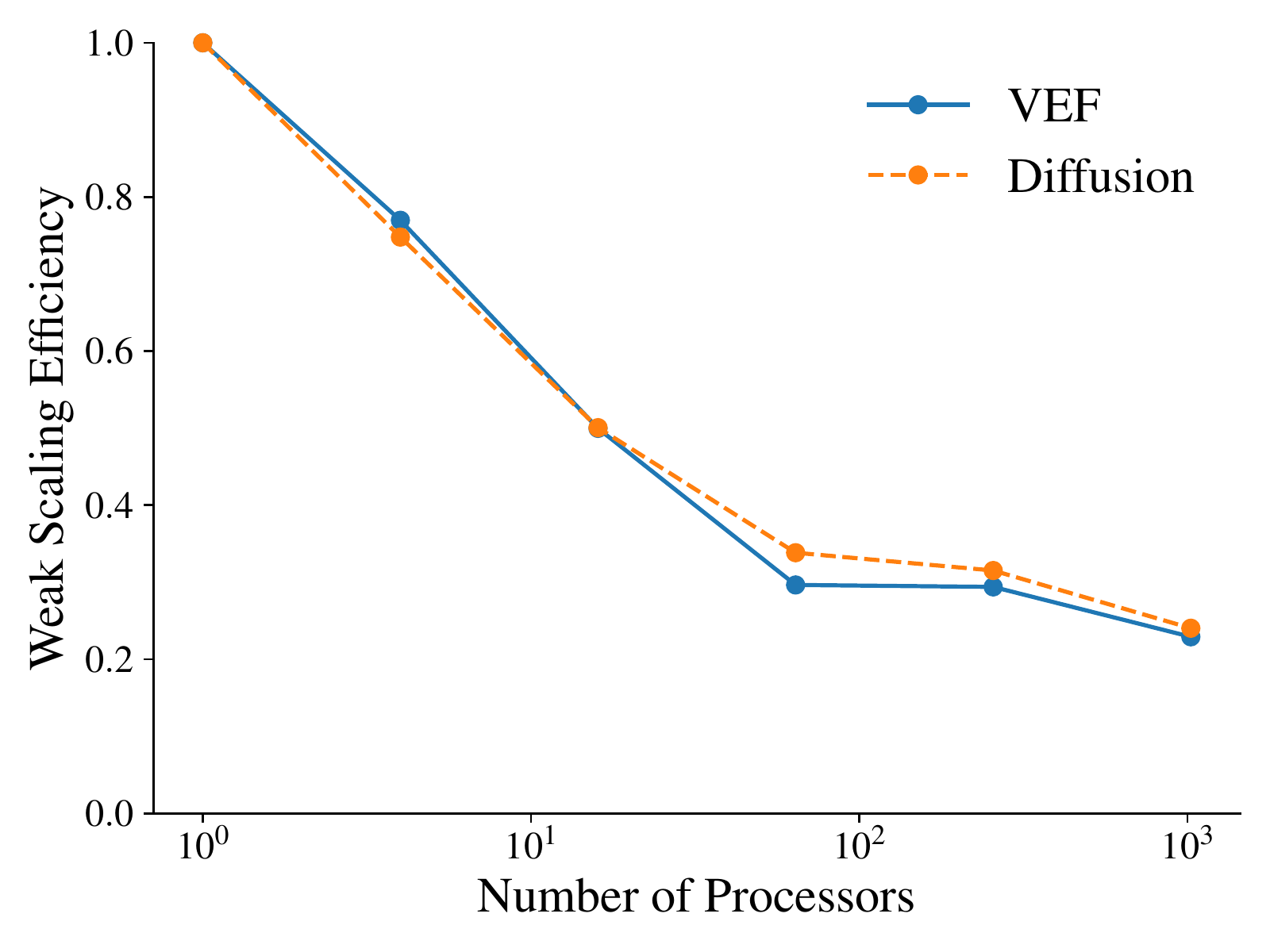}
	\caption{}
	\label{fig:weakeff_hrt}
\end{subfigure}
\caption{Weak scaling efficiency for solving the (a) RT and (b) HRT VEF systems on the first iteration of the linearized crooked pipe problem. For the RT discretization, a range of preconditioners are presented corresponding to the use of Jacobi (J) or Gauss-Seidel (GS) and the number of AMG V-cycles applied per iteration. In both cases, the scaling of the VEF solve is compared to solving radiation diffusion. The RT diffusion system is preconditioned with the J-1 preconditioner. Both HRT VEF and HRT diffusion use one AMG V-cycle to precondition the reduced system.}
\end{figure}

\section{Conclusions}
We have developed high-order mixed finite element discretizations for the Variable Eddington Factor (VEF) equations that are compatible with curved meshes. The methods were designed to have element-local particle balance and immediate multiphysics compatibility with the mixed finite element techniques used for hydrodynamics calculations in \cite{blast}. Each method produces a scalar flux solution in the Discontinuous Galerkin (DG) space to match the approximation space used for the thermodynamic variables in \cite{blast}. We considered three choices for the finite element space that approximates the current: a discrete subspace of $\Hone$ where each component is represented with continuous finite elements, a method that uses the Raviart Thomas (RT) space along with DG-like numerical fluxes to treat the discontinuities arising from the presence of the Eddington tensor in the VEF first moment equation, and a hybrid RT method where continuity of the normal component of the current is enforced weakly with a Lagrange multiplier. These methods are referred to as H1, RT, and HRT, respectively. The VEF discretizations were paired with a high-order DG discretization of the \Sn transport equations to solve problems from linear transport. 

On manufactured solutions problems, the H1, RT, and HRT methods all had optimal $\mathcal{O}(h^{p+1})$ convergence for the VEF scalar flux on refinements of a curved mesh. However, optimal convergence for the VEF current was observed only on diffusive problems. 
This result suggests that the VEF scalar flux and the zeroth angular moment of the discrete angular flux converge at equivalent rates. 
However, on transport problems, the VEF current could not be solved to the same accuracy as the first angular moment of the discrete angular flux. 
In addition, the mixed finite element superconvergence property for the VEF scalar flux was lost on the standard use case of equal degree interpolation for the VEF scalar and angular fluxes on a quadratically anisotropic transport problem. This suggests that post-processing techniques, such as the method proposed by \citet{Stenberg1991}, that leverage the mixed finite element superconvergence property to produce more accurate solutions would not be effective on transport problems where equal degree interpolation is used for the VEF scalar and angular fluxes. 

All three methods showed rapid and robust convergence on a single-material thick diffusion limit test problem on both a simple orthogonal mesh and a severely distorted third-order mesh generated with a Lagrangian hydrodynamics code. The methods were tested on mesh and polynomial order refinements of a two-material linearized crooked pipe problem that had a 1000x difference in total cross section. Fixed-point convergence was robust for all three methods with RT and HRT converging equivalently. The H1 method converged slower, requiring $\approx$\!\,1.75x more iterations than RT and HRT on the largest mesh for each polynomial order. 
It was observed that the H1 VEF discretization produced scattering sources more likely to induce negativity in the transport sweep leading to an increased reliance on the negative flux fixup compared to the RT and HRT methods. 

We also investigated preconditioned iterative solvers for the H1, RT, and HRT methods. Lower block triangular preconditioners were used for the H1 and RT methods that employ Jacobi smoothing on the total interaction mass matrix and Algebraic Multigrid (AMG) on the lumped Schur complement. The solvers for the HRT method leverage the element-by-element block structure generated by discontinuous approximations to form a reduced system for the Lagrange multiplier only, leading to fewer globally coupled unknowns than in the H1 or RT methods. AMG is applied directly to the reduced problem. The preconditioned iterative solvers were tested on a series of increasingly distorted meshes to test their robustness. The H1 and HRT methods converged for all distortions but the RT method failed to converge once the mesh became too distorted. The RT and HRT methods were shown to have scalable solvers in both $h$ and $p$ on the linearized crooked pipe problem. However, the solvers for H1 were not scalable. It was found that AMG was struggling to adequately precondition the lumped Schur complement due to the presence of highly oscillatory, slowly decaying modes. These modes are a consequence of the mismatch between the finite element spaces used to approximate the VEF scalar flux and current and were present even on a simple Poisson eigenvalue problem. Finally, a weak scaling study demonstrated that the RT and HRT methods can be scalably solved out to 1024 processors and over 40 million VEF scalar flux unknowns. Compared to solving the symmetric positive definite radiation diffusion system, solving the non-symmetric VEF equations was only 1.23x and 1.19x more expensive for the RT and HRT methods, respectively. 

% The primary takeaway from this work is that the RT and HRT VEF discretizations coupled to a DG \Sn transport discretization are effective high-order methods for linear transport. 
The primary takeaway from this work is that the combination of a DG \Sn discretization and the RT or HRT VEF discretizations form an effective high-order method for linear transport problems. 
Both the RT and HRT discretizations of the VEF equations have high-order accuracy, compatibility with curved meshes, and robust and scalable convergence in both outer fixed-point iterations and inner preconditioned linear solver iterations. 
The performance of the methods was differentiated only in the presence of severely distorted meshes. In such case, the preconditioned iterative solver for the HRT method was robust to mesh distortion whereas the solver for the RT method was not. 
The H1 method is not recommended for use in a production code due to the lack of scalable iterative solvers. In addition, the H1 method had lower fixed-point iteration efficiency and higher reliance on the negative flux fixup on the linearized crooked pipe problem when compared to the RT and HRT methods. 

In radiation-hydrodynamics calculations, the scalar flux and current are coupled to the hydrodynamics' energy balance and momentum equations, respectively. 
Due to the sub-optimal accuracy of the VEF current on transport problems, it is unclear whether the mixed finite element methods presented here would yield improvements in physics fidelity commensurate with the increased cost of solving for both the VEF scalar flux and current. Future work includes investigating the quality of the solution provided by the RT and HRT methods on tightly coupled multiphysics problems as compared to the DG VEF methods presented in \cite{olivier2021family}. 

\section*{Acknowledgments}
This work was performed under the auspices of the U.S. Department of Energy by Lawrence Livermore National Laboratory under contract DE-AC52-07NA27344 (LLNL-JRNL-829396). S.O. was supported by the U.S. Department of Energy, Office of Science, Office of Advanced Scientific Computing Research, and the Department of Energy Computational Science Graduate Fellowship under Award Number DE-SC0019323. 

\bibliographystyle{IEEEtranN}
\bibliography{references}

\appendix 
\section{The Gradient of the Piola Transform} \label{app:grad_piola}
% Let $\bvec{t}_i$ be the columns of $\mat{F}$ and $\bvec{n}_i$ be the rows of $\mat{F}^{-1}$ such that 
% 	\begin{equation}
% 		\mat{F} = \begin{bmatrix} 
% 			\bvec{t}_1 & \bvec{t}_2 
% 		\end{bmatrix}\,, \quad 
% 		\mat{F}^{-1} = \begin{bmatrix} 
% 			\bvec{n}_1^T \\ \bvec{n}_2^T 
% 		\end{bmatrix} \,. 
% 	\end{equation}
% Then, the $\bvec{t}_i$ and $\bvec{n}_i$ form a bi-orthogonal basis such that $\bvec{t}_i \cdot \bvec{n}_j = \delta_{ij}$. 
The goal of this section section is to derive a formula for the transformation of the gradient of a vector defined under the contravariant Piola transformation. For the contravariant Piola transform $\vec{v} = \frac{1}{J}\mat{F}\hvec{v} \circ \T^{-1}$ the inverse transform is: 
	\begin{equation}
		\hvec{v} = J\mat{F}^{-1}\vec{v} \circ \T \,. 
	\end{equation}
Here, we seek to derive 
	\begin{equation}
		\hnabla\hvec{v} = \hnabla\!\paren{J\mat{F}^{-1}\vec{v}} \,, 
	\end{equation}
so that we can solve for $\nabla\vec{v}$. The goal is to derive the functional form of the transformation in terms of functionality commonly implemented in finite element codes. That is, we cast the computation in terms of the Jacobian matrix and Hessian of the transformation. 

Through their connection to the Jacobian matrix and the inverse of the Jacobian matrix, the tangent and cotangent spaces are related by 
	\begin{equation}
		\bvec{n}_1 = \bvec{t}_2 \times \hat{\e}_3 \,, \quad \bvec{n}_2 = \hat{\e}_3 \times \bvec{t}_1 \,,
	\end{equation}
where $\hat{\e}_3$ points out of the page. In other words, $\bvec{n}_1$ is a 90 degree clockwise rotation of $\bvec{t}_2$ and $\bvec{n}_2$ is a 90 degree counterclockwise rotation of $\bvec{t}_1$ (see Fig.~\ref{fig:piola}). Thus, we can write 
	\begin{equation}
	\begin{aligned}
		\hnabla\hvec{v} &= \hnabla \begin{bmatrix} 
			J \bvec{n}_1\cdot\vec{v} \\ J \bvec{n}_2 \cdot \vec{v} 
		\end{bmatrix}\\
		&= \begin{bmatrix} 
			\pderiv{}{\xi}(J\bvec{n}_1\cdot\vec{v}) & \pderiv{}{\eta}(J\bvec{n}_1\cdot\vec{v}) \\ 
			\pderiv{}{\xi}(J\bvec{n}_2\cdot\vec{v}) & \pderiv{}{\eta}(J\bvec{n}_2 \cdot \vec{v}) 
		\end{bmatrix} \\
		&= \begin{bmatrix} 
			\pderiv{}{\xi}(J\bvec{n}_1)\cdot\vec{v} & \pderiv{}{\eta}(J\bvec{n}_1)\cdot\vec{v} \\ 
			\pderiv{}{\xi}(J\bvec{n}_2)\cdot\vec{v} & \pderiv{}{\eta}(J\bvec{n}_2)\cdot\vec{v} 
		\end{bmatrix}
		+ \begin{bmatrix} 
			J\bvec{n}_1 \cdot\pderiv{\vec{v}}{\xi} & J\bvec{n}_1 \cdot\pderiv{\vec{v}}{\eta}\\
			J\bvec{n}_2 \cdot\pderiv{\vec{v}}{\xi} & J\bvec{n}_2 \cdot\pderiv{\vec{v}}{\eta}
		\end{bmatrix} \,.
	\end{aligned}
	\end{equation}
The second term can be written as 
	\begin{equation}
		\begin{bmatrix} 
			J\bvec{n}_1 \cdot\pderiv{\vec{v}}{\xi} & J\bvec{n}_1 \cdot\pderiv{\vec{v}}{\eta}\\
			J\bvec{n}_2 \cdot\pderiv{\vec{v}}{\xi} & J\bvec{n}_2 \cdot\pderiv{\vec{v}}{\eta}
		\end{bmatrix}
		= J\mat{F}^{-1}\hnabla\vec{v} = J\mat{F}^{-1}\nabla\vec{v}\mat{F} \,, 
	\end{equation}
where $\hnabla\vec{v} = \nabla\vec{v}\mat{F}$ transforms the reference gradient to the physical gradient. The first term is a third-order tensor contracted with a vector to yield a second-order tensor. By expanding the dot products, we can emulate this contraction as a sum of two second-order tensors: 
	\begin{equation}
	\begin{aligned}
		\begin{bmatrix} 
			\pderiv{}{\xi}(J\bvec{n}_1) \cdot \vec{v} & \pderiv{}{\eta}(J\bvec{n}_1)\cdot\vec{v} \\ 
			\pderiv{}{\xi}(J\bvec{n}_2)\cdot\vec{v} & \pderiv{}{\eta}(J\bvec{n}_2)\cdot\vec{v} 
		\end{bmatrix} &= 
		\begin{bmatrix} 
			\pderiv{}{\xi}(Jn_{11})v_1 + \pderiv{}{\xi}(Jn_{12})v_2 & \pderiv{}{\eta}(Jn_{11})v_1 + \pderiv{}{\eta}(Jn_{12})v_2 \\
			\pderiv{}{\xi}(Jn_{21})v_1 + \pderiv{}{\xi}(Jn_{22})v_2 & \pderiv{}{\eta}(Jn_{21})v_1 + \pderiv{}{\eta}(Jn_{22})v_2 
		\end{bmatrix} \\
		&= \begin{bmatrix} 
			\pderiv{}{\xi}(Jn_{11}) & \pderiv{}{\eta}(Jn_{11}) \\ 
			\pderiv{}{\xi}(Jn_{21}) & \pderiv{}{\eta}(Jn_{21}) 
		\end{bmatrix} v_1 + 
		\begin{bmatrix} 
			\pderiv{}{\xi}(Jn_{12}) & \pderiv{}{\eta}(Jn_{12}) \\ 
			\pderiv{}{\xi}(Jn_{22}) & \pderiv{}{\eta}(Jn_{22}) 
		\end{bmatrix} v_2 \\
		&= \hnabla (J\F_1^{-1}) v_1 + \hnabla(J \F_2^{-1}) v_2
	\end{aligned}
	\end{equation}
where $\F^{-1}_i$ are the columns of $\F^{-1}$. Typically, finite element codes provide the Hessian matrix of the forward map but not the inverse map. Thus, to leverage existing functionality, we must write the above matrices in terms of $\H = \hnabla\mat{F}$ instead of $\hnabla\mat{F}^{-1}$. Assume that the code computes the Hessian matrix in \emph{flattened} and symmetric form as:  
	\begin{equation}
		\ang{\H} = \begin{bmatrix} 
			\frac{\partial^2 x}{\partial \xi^2} & \frac{\partial^2 x}{\partial\xi\partial\eta} & \frac{\partial^2 x}{\partial\eta^2} \\
			\frac{\partial^2 y}{\partial \xi^2} & \frac{\partial^2 y}{\partial\xi\partial\eta} & \frac{\partial^2 y}{\partial\eta^2}
		\end{bmatrix} \,. 
	\end{equation}
Then the above can be rewritten as 
	\begin{equation}
	\begin{aligned}
		\hnabla(J\F_1^{-1}) &= \hnabla \begin{bmatrix} 
			F_{22} \\ -F_{21} 
		\end{bmatrix} \\
		&= \hnabla\begin{bmatrix} 
			\pderiv{y}{\eta} \\ -\pderiv{y}{\xi} 
		\end{bmatrix}\\
		&= \begin{bmatrix} 
			\frac{\partial^2 y}{\partial\xi\partial\eta} & \frac{\partial^2 y}{\partial\eta^2} \\ 
			-\frac{\partial^2 y}{\partial\xi^2} & -\frac{\partial^2 y}{\partial\xi\partial\eta} 
		\end{bmatrix}\\
		&= \begin{bmatrix} 
			H_{22} & H_{23} \\ -H_{21} & -H_{22}
		\end{bmatrix} \,, 
	\end{aligned}
	\end{equation}
	\begin{equation}
	\begin{aligned}
		\hnabla(J\F_2^{-1}) &= \hnabla \begin{bmatrix} 
			-F_{12} \\ F_{11} 
		\end{bmatrix} \\
		&= \hnabla\begin{bmatrix} 
			-\pderiv{x}{\eta} \\ \pderiv{x}{\xi} 
		\end{bmatrix}\\
		&= \begin{bmatrix} 
			-\frac{\partial^2 x}{\partial\xi\partial\eta} & -\frac{\partial^2 x}{\partial\eta^2} \\ 
			\frac{\partial^2 x}{\partial\xi^2} & \frac{\partial^2 x}{\partial\xi\partial\eta} 
		\end{bmatrix}\\
		&= \begin{bmatrix} 
			-H_{12} & -H_{13} \\ H_{11} & H_{12} 
		\end{bmatrix} \,. 
	\end{aligned}
	\end{equation}
We can define the matrix 
	\begin{equation} \label{eq:bmat_parts}
		\hat{\mat{B}} = \hnabla(J\mat{F}^{-1}) \vec{v} = \begin{bmatrix} 
			H_{22} & H_{23} \\ -H_{21} & -H_{22}
		\end{bmatrix} v_1 + 
		\begin{bmatrix} 
			-H_{12} & -H_{13} \\ H_{11} & H_{12} 
		\end{bmatrix} v_2 \,. 
	\end{equation}
This is computed in flattened form as 
	\begin{equation} \label{eq:bhat_flat}
	\begin{aligned}
		\ang{\hat{\mat{B}}} &= \begin{bmatrix} 
			\ang{\hnabla(J\F_1^{-1})} & \ang{\hnabla(J\F_2^{-1})}
		\end{bmatrix} \vec{v} \\
		&= \begin{bmatrix} 
			H_{22} & -H_{12} \\ H_{23} & -H_{13} \\ -H_{21} & H_{11} \\ -H_{22} & H_{12} 
		\end{bmatrix} \frac{1}{J}\F\hvec{v} 
	\end{aligned}
	\end{equation}
where $\vec{v} = \frac{1}{J}\F\hvec{v}$ was used. Finally, we have that 
	\begin{equation}
		\hnabla \hvec{v} = \hat{\mat{B}} + J \F^{-1} \nabla\vec{v} \F \iff \nabla\vec{v} = \frac{1}{J}\F\!\paren{\hnabla\hvec{v} - \hat{\mat{B}}}\!\F^{-1} \,. 
	\end{equation}
We can then say that 
	\begin{equation} \label{eq:nablaE_trans}
	\begin{aligned}
		\nabla\vec{v} : \E \ud \x &= \frac{1}{J}\F\!\paren{\hnabla\hvec{v} - \hat{\mat{B}}}\!\F^{-1} : \E \, J \!\ud \vec{\xi} \\
		&= \paren{\hnabla\hvec{v} - \hat{\mat{B}}} : \F^{T} \E \F^{-T} \,\ud\vec{\xi} \,. 
	\end{aligned}
	\end{equation}
Here, we use the fact that $\mat{A} : \mat{B} = \tr(\mat{A}\mat{B}^T)$ and apply the cyclic property of the trace to permute $\mat{F}$ and $\mat{F}^{-1}$. 
In this form, we can implement the gradient calculation as a matrix-vector product of the flattened referential gradient and the coefficients of $\hvec{v}$. 

When the mesh transformation is affine, $\hmat{B} = 0$ since the Hessian of an affine transformation is zero. In addition, the Piola identity states that $\tr\hmat{B} = 0$. This can be most easily seen in Eq.~\ref{eq:bmat_parts} where 
	\begin{equation}
		\tr\hmat{B} = (H_{22} - H_{22}) v_1 + (-H_{12} + H_{12}) v_2 = 0 \,. 
	\end{equation}
Using the Piola identity and Eq.~\ref{eq:nablaE_trans}, we have that 
	\begin{equation}
	\begin{aligned}
		\nabla\cdot\vec{v} \ud \x &= \nabla\vec{v} : \I \ud \x \\
		&= \paren{\hnabla\hvec{v} - \hmat{B}} : \mat{F}^T\I\mat{F}^{-T} \ud \vec{\xi} \\
		&= \tr\paren{\hnabla\hvec{v} - \hmat{B}} \ud \vec{\xi} \\
		&= \hnabla\cdot\hvec{v} \ud \vec{\xi} \,. 
	\end{aligned}
	\end{equation}
Thus, in the thick diffusion limit when $\E \propto \I$, $\nabla\vec{v} : \E$ simplifies to the standard transformation for the divergence of a contravariant vector.

\section{Discrete Inf-Sup Condition} \label{app:infsup}
Here, we discuss the the inf-sup condition that governs the solvability of the $2\times 2$ block systems arising in mixed finite element discretizations. Two excellent references for this topic are \citet{Brezzi2003StabilityOS} and \citet{benzi_golub_liesen_2005}. We present an analysis for Poisson's equation since methods not effective for this simpler problem have no hope of being effective for the VEF equations. 

\subsection{Conditions for Solvability} \label{app:infsup_solvability}
Consider the linear system: 
	\begin{equation} \label{rtvef:block_sys}
		\begin{bmatrix} 
			\mat{M} & -\mat{D}^T \\ \mat{D} 			
		\end{bmatrix}
		\begin{bmatrix} 
			\fevec{q} \\ \fevec{u} 
		\end{bmatrix}
		= \begin{bmatrix} 
			0 \\ \fevec{f} 
		\end{bmatrix} \,, 
	\end{equation}
which corresponds to the mixed finite element discretization of 
	\begin{subequations}
	\begin{equation}
		 \vec{q} + \nabla u = 0 \,,
	\end{equation}
	\begin{equation}
		\nabla\cdot\vec{q} = f \,. 
	\end{equation}
	\end{subequations}
Note that the above is Poisson's equation, $-\nabla^2 u = f$, in mixed form. The matrices are of the form:
	\begin{equation}
		\fevec{v}^T\mat{M} \fevec{q} = \int \vec{v}\cdot\vec{q} \ud \x \,, \quad \fevec{w}^T\mat{D}\fevec{q} = \int w\,\nabla\cdot\vec{q} \ud \x \,. 
	\end{equation}

We wish to demonstrate the conditions for when the block system in Eq.~\ref{rtvef:block_sys} is non-singular. To show the solution is unique, it must be verified that $f = 0$ implies that $u = 0$ and $\vec{q} = 0$. When $f=0$, we have that 
	\begin{equation}
		\mat{D} \fevec{q} = 0 \iff \fevec{q} \in N(\mat{D}) \,,
	\end{equation}
where $N(\mat{D})$ denotes the nullspace of $\mat{D}$ such that 
	\begin{equation}
		N(\mat{D}) = \{ \fevec{v} : \mat{D}\fevec{v} = 0 \} \,. 
	\end{equation}
For some $\fevec{v} \in N(\mat{D})$, the first equation reads 
	\begin{equation}
		\fevec{v}^T \mat{M} \fevec{q} - \fevec{v}^T \mat{D}^T \fevec{u} = 0 \,. 
	\end{equation}
Since $\fevec{v} \in N(\mat{D})$, $\fevec{v}^T \mat{D}^T = 0$. Thus, we have that 
	\begin{equation}
		\fevec{v}^T \mat{M} \fevec{q} = 0 \,, \quad \forall \fevec{v} \in N(\mat{D}) \,. 
	\end{equation}
Since $\mat{M}$ is a mass matrix, it is symmetric positive definite and thus, $\fevec{v}^T \mat{M}\fevec{q} =0 \iff \fevec{q} = 0$. In other words, we have shown that $\fevec{f} = 0 \Rightarrow \fevec{q} = 0$. Setting $\fevec{q} = 0$ in the first row of Eq.~\ref{rtvef:block_sys} yields: 
	\begin{equation}
		\mat{D}^T \fevec{u} = 0 \,. 
	\end{equation}
For the block system to be non-singular, we must have that 
	\begin{equation} \label{eq:inf-sup}
		\mat{D}^T \fevec{u} = 0 \iff \fevec{u} = 0 \,. 
	\end{equation}
Equivalently, we require that the nullspace of $\mat{D}^T$ has only the trivial nullspace of zero (i.e.~$N(\mat{D}^T) = \{0\}$). The discrete inf-sup condition is precisely this condition on the matrix $\mat{D}$ that $N(\mat{D}^T) = \{0\}$. 

\subsection{Characterization for a Single Element} \label{app:infsup_spurious}
We now particularize the matrix $\mat{D}$ for a single element, $K = [0,1]^2$, of the $\Hone \times L^2(\D)$ and $H(\div;\D)\times L^2(\D)$ discretizations of the Poisson problem. 
We consider the $W_1 \times Y_0$ and $W_1 \times Y_1$ discretizations and show that the former and latter have non-trivial nullspaces for $\mat{D}$ and $\mat{D}^T$, respectively. In light of the inf-sup requirement established above, the $W_1 \times Y_1$ discretization will lead to a singular block system. We also show that the pairing $W_1$ and $Y_0$ is solvable but is imbalanced in another way that allows the presence of non-physical modes. By contrast, the $\RT_0 \times Y_0$ discretization is non-singular and does not allow these non-physical modes. 

On the single element $K$, the lowest-order Raviart Thomas and $\Hone$ finite element spaces are given by: 
	\begin{subequations}
	\begin{equation}
		\RT_0 = \spn\{\twovec{1}{0}\,, \twovec{x}{0}\,, \twovec{0}{1}\,, \twovec{0}{y} \} \,,
	\end{equation}
	\begin{equation}
		W_1 = \spn\{\twovec{1}{0}\,,\twovec{x}{0} \,, \twovec{y}{0}\,,\twovec{xy}{0}\,,\twovec{0}{1}\,,\twovec{0}{x}\,,\twovec{0}{y}\,,\twovec{0}{xy}\}\,. 
	\end{equation}
	\end{subequations}
Further, the constant and linear DG spaces are: 
	\begin{equation}
		Y_0 = \spn\{1\} \,, \quad Y_1 = \spn\{1, x, y, xy\} \,. 
	\end{equation}
Observe that the divergence of $\RT_0$ is exactly the constant polynomial space, $Y_0$: 
	\begin{equation}
		\nabla\cdot \RT_0 = \spn\{0,1,0,1\} = \spn\{1\} = Y_0 \,, 
	\end{equation}
while the divergence of the $\Hone$ space is: 
	\begin{equation}
		\nabla\cdot W_1 = \spn\{0, 1, 0, y, 0, 0, 1, x\} = \spn\{1,x,y\} \,,
	\end{equation}
which is a space larger than $Y_0$ but smaller than $Y_1$. The nullspaces of the divergence of the RT and $\Hone$ local polynomial spaces are spanned by 
	\begin{subequations}
	\begin{equation}
		N(\nabla\cdot\RT_0) = \spn\{ \twovec{-x}{y}\,, \twovec{0}{1}\,, \twovec{1}{0}\}\,,
	\end{equation}
	\begin{equation}
		N(\nabla\cdot W_1) = \spn\{\twovec{-x}{y}\,, \twovec{0}{1}\,, \twovec{1}{0}\,, \twovec{0}{x} \,, \twovec{y}{0} \} \,. 
	\end{equation}
	\end{subequations}
Here, we can already see an issue forming: the nullspace for $W_1$ is larger than the nullspace for $\RT_0$. 

We are interested in the bilinear form 
	\begin{equation}
		D(u,\vec{v}) = \int_K u\, \nabla\cdot\vec{v} \ud \x \,, \quad u \in Y \,,\ \vec{v} \in X \,, 
	\end{equation}
where $Y$ is either $Y_0$ or $Y_1$ and $X$ is either $W_1$ or $\RT_0$. This bilinear form admits the matrix $\mat{D}$ through 
	\begin{equation}
		\fevec{u}^T\mat{D} \fevec{v} = D(u,\vec{v}) \,, \quad \forall u \in Y \,. 
	\end{equation}
$\mat{D}$ has a nullspace corresponding to the nullspace of the divergence operator and vectors $\vec{v}$ such that $w = \nabla\cdot\vec{v} \neq 0$ where
	\begin{equation}
		M(u,w) = \int uw \ud \x = 0 \,,
	\end{equation}
for each $u \in Y$. 
We consider elements of the nullspace corresponding to this second condition to be non-physical since they arise from the mismatch between the spaces $Y$ and $X$ and not the divergence operator itself. 
For the case $Y=Y_1$ and $X=W_1$ corresponding to the $W_1 \times Y_1$ discretization, the space $Y_1$ is larger than $\nabla\cdot W_1$ and thus there does not exist $w \neq 0$ such that $M(u,w) = 0$ for all $u \in Y_1$. Thus, $M$ has only the trivial nullspace and $N(\mat{D}) = N(\nabla\cdot W_1)$. However, there does exist $u \neq 0$ such that $M(u,w) = 0$ for each $w \in \nabla\cdot W_1$. In particular, $u = \alpha\paren{\frac{1}{4} - \frac{1}{2}x - \frac{1}{2}y + xy}$ for any $\alpha\neq 0$ satisfies $M(u,w) = 0$ for all $w \in \nabla\cdot W_1$. 
Note that this particular form for the nullspace arises from our choice of the domain $\D = K = [0,1]^2$. 
This means $\mat{D}^T$ has a non-trivial nullspace and thus the resulting $2\times 2$ block system will be singular by the inf-sup requirement in Eq.~\ref{eq:inf-sup}. 

For the case $Y=Y_0$ and $X=W_1$ corresponding to the $W_1 \times Y_0$ discretization, there exists $w \in \nabla\cdot W_1$ such that $w\neq 0$ but $M(u,w) = 0$. 
In particular, $N(M) = \spn\{x-1/2, y-1/2\}$ and thus vectors with divergence in $N(M)$ will also be in $N(\mat{D})$. In other words, $N(\mat{D}) = N(\nabla\cdot W_1) \cup N_\text{spurious}$ where 
	\begin{equation}
		N_\text{spurious} = \spn\{ \twovec{x(y-1/2)}{0}\,,\twovec{0}{(x-1/2)y} \} \,, 
	\end{equation}
is the space of vectors whose divergence belongs to $N(M)$. 
Observe that for $\vec{v} \in N_\text{spurious}$, $\mat{D}\fevec{v} = 0$ but $\nabla\cdot \vec{v} \neq 0$. 
On the other hand, since $\nabla\cdot W_1$ is larger than $Y_0$, $\mat{D}^T$ has only the trivial nullspace. 
Thus, the $W_1 \times Y_0$ discretization is solvable but $\mat{D}$ has a non-physically enlarged nullspace that allows spurious solutions.  

By contrast, with $Y = Y_0$ and $X = \RT_0$, $\nabla\cdot \RT_0 = Y_0$ meaning $M$ is the $L^2(K)$ inner product of functions in $Y_0$. In such case, $M$ is symmetric and positive definite and thus has only the trivial nullspace. This means $N(\mat{D}) = N(\nabla\cdot \RT_0)$ and $N(\mat{D}^T) = \{0\}$. The $\RT_0\times Y_0$ discretization is then non-singular and $\mat{D}$ has only the physical nullspace associated with the divergence operator. 

The takeaway is that for the pairing $W_1 \times Y_0$, $W_1$ is rich enough to ensure non-singularity but it is too rich with respect to $Y_0$ such that spurious modes are allowed. The $W_1\times Y_1$ discretization has the opposite problem: while $\nabla\cdot W_1$ is small enough with respect to $Y_1$ to avoid spurious modes it is too small to allow the block system to be non-singular. In general, $Y_p \subset \nabla\cdot W_p \subset Y_{p+1}$ and thus one must always compromise between solvability and avoiding spurious modes. On the other hand, $\nabla\cdot\RT_p = Y_p$ so that the $\RT_p\times Y_p$ discretization is both solvable and does not allow spurious solutions. 

\section{Method of Manufactured Solutions Supplemental Data} \label{app:mms}
\begin{table}
\centering
\caption{Error values from an isotropic MMS test problem. The H1, RT, and HRT columns refer to the $Y_p\times W_{p+1}$, $Y_p\times \RT_p$, and hybridized $Y_p\times \RT_p$ discretizations, respectively. The error in the scalar flux, the error in the scalar flux when the exact solution is first projected onto $Y_p$, and the error in the current are presented for each method over a range of values of $p$. Here, the VEF data are constant in space and thus are represented exactly. }
\label{fig:mms_diff_vals}
\resizebox{\textwidth}{!}{\begin{tabular}{ccccccccccccccc}
\toprule
 &  &  & \multicolumn{3}{c}{$\| \varphi - \varphi_\text{ex}\|$}  &  & \multicolumn{3}{c}{$\| \varphi - \Pi \varphi_\text{ex}\|$}  &  & \multicolumn{3}{c}{$\| \vec{J} - \vec{J}_\text{ex}\|$} \\
\cmidrule{4-6}\cmidrule{8-10}\cmidrule{12-14}
$p$ & $h$ & & H1 & RT & HRT & & H1 & RT & HRT & & H1 & RT & HRT \\
\midrule
\multirow{4}{*}{1} & \num{3.994e-02} & & \num{4.160e-04} & \num{4.161e-04} & \num{4.161e-04} & & \num{9.853e-06} & \num{1.067e-05} & \num{1.067e-05} & & \num{5.605e-04} & \num{1.251e-03} & \num{1.251e-03} \\
 & \num{1.997e-02} & & \num{1.040e-04} & \num{1.040e-04} & \num{1.040e-04} & & \num{1.209e-06} & \num{1.260e-06} & \num{1.260e-06} & & \num{1.399e-04} & \num{3.125e-04} & \num{3.125e-04} \\
 & \num{1.331e-02} & & \num{4.624e-05} & \num{4.624e-05} & \num{4.624e-05} & & \num{3.570e-07} & \num{3.693e-07} & \num{3.693e-07} & & \num{6.217e-05} & \num{1.389e-04} & \num{1.389e-04} \\
 & \num{9.985e-03} & & \num{2.601e-05} & \num{2.601e-05} & \num{2.601e-05} & & \num{1.505e-07} & \num{1.552e-07} & \num{1.552e-07} & & \num{3.497e-05} & \num{7.812e-05} & \num{7.812e-05} \\
\addlinespace
\multirow{4}{*}{2} & \num{5.874e-02} & & \num{1.407e-05} & \num{1.411e-05} & \num{1.411e-05} & & \num{7.222e-07} & \num{1.301e-06} & \num{1.301e-06} & & \num{2.718e-05} & \num{1.629e-04} & \num{1.629e-04} \\
 & \num{3.026e-02} & & \num{1.921e-06} & \num{1.922e-06} & \num{1.922e-06} & & \num{4.412e-08} & \num{8.331e-08} & \num{8.331e-08} & & \num{3.236e-06} & \num{2.254e-05} & \num{2.254e-05} \\
 & \num{1.997e-02} & & \num{5.521e-07} & \num{5.523e-07} & \num{5.523e-07} & & \num{8.065e-09} & \num{1.542e-08} & \num{1.542e-08} & & \num{8.901e-07} & \num{6.495e-06} & \num{6.495e-06} \\
 & \num{1.490e-02} & & \num{2.295e-07} & \num{2.295e-07} & \num{2.295e-07} & & \num{2.466e-09} & \num{4.740e-09} & \num{4.740e-09} & & \num{3.626e-07} & \num{2.701e-06} & \num{2.701e-06} \\
\addlinespace
\multirow{4}{*}{3} & \num{7.681e-02} & & \num{9.628e-07} & \num{9.905e-07} & \num{9.905e-07} & & \num{9.972e-08} & \num{2.604e-07} & \num{2.604e-07} & & \num{3.951e-06} & \num{3.112e-05} & \num{3.112e-05} \\
 & \num{3.994e-02} & & \num{7.071e-08} & \num{7.112e-08} & \num{7.112e-08} & & \num{3.471e-09} & \num{8.727e-09} & \num{8.727e-09} & & \num{2.818e-07} & \num{2.231e-06} & \num{2.231e-06} \\
 & \num{2.628e-02} & & \num{1.326e-08} & \num{1.329e-08} & \num{1.329e-08} & & \num{4.149e-10} & \num{1.043e-09} & \num{1.043e-09} & & \num{5.275e-08} & \num{4.175e-07} & \num{4.175e-07} \\
 & \num{1.997e-02} & & \num{4.426e-09} & \num{4.432e-09} & \num{4.432e-09} & & \num{1.041e-10} & \num{2.619e-10} & \num{2.619e-10} & & \num{1.762e-08} & \num{1.393e-07} & \num{1.393e-07} \\
\addlinespace
\multirow{4}{*}{4} & \num{9.986e-02} & & \num{3.563e-07} & \num{3.566e-07} & \num{3.566e-07} & & \num{4.665e-08} & \num{4.712e-08} & \num{4.712e-08} & & \num{1.261e-06} & \num{4.258e-06} & \num{4.258e-06} \\
 & \num{4.993e-02} & & \num{1.155e-08} & \num{1.157e-08} & \num{1.157e-08} & & \num{7.186e-10} & \num{8.170e-10} & \num{8.170e-10} & & \num{3.640e-08} & \num{2.027e-07} & \num{2.027e-07} \\
 & \num{3.328e-02} & & \num{1.524e-09} & \num{1.525e-09} & \num{1.525e-09} & & \num{6.290e-11} & \num{6.896e-11} & \num{6.896e-11} & & \num{4.554e-09} & \num{2.712e-08} & \num{2.712e-08} \\
 & \num{2.496e-02} & & \num{3.619e-10} & \num{3.619e-10} & \num{3.619e-10} & & \num{1.119e-11} & \num{1.209e-11} & \num{1.210e-11} & & \num{1.089e-09} & \num{6.473e-09} & \num{6.473e-09} \\
\bottomrule
\end{tabular}}
\end{table}
\begin{table}
\centering
\caption{Error values from a quadratically anisotropic MMS test problem. The H1, RT, and HRT columns refer to the $Y_p\times W_{p+1}$, $Y_p\times \RT_p$, and hybridized $Y_p\times \RT_p$ discretizations, respectively. The error in the scalar flux, the error in the scalar flux when the exact solution is first projected onto $Y_p$, and the error in the current are presented for each method over a range of values of $p$. Here, the angular flux used to calculate the VEF data is represented with $Y_p$. Due to this, the maximum accuracy expected is order $p+1$. }
\label{fig:mms_vals}
\resizebox{\textwidth}{!}{\begin{tabular}{ccccccccccccccc}
\toprule
 &  &  & \multicolumn{3}{c}{$\| \varphi - \varphi_\text{ex}\|$}  &  & \multicolumn{3}{c}{$\| \varphi - \Pi \varphi_\text{ex}\|$}  &  & \multicolumn{3}{c}{$\| \vec{J} - \vec{J}_\text{ex}\|$} \\
\cmidrule{4-6}\cmidrule{8-10}\cmidrule{12-14}
$p$ & $h$ & & H1 & RT & HRT & & H1 & RT & HRT & & H1 & RT & HRT \\
\midrule
\multirow{4}{*}{1} & \num{3.994e-02} & & \num{1.891e-03} & \num{1.891e-03} & \num{1.890e-03} & & \num{2.589e-04} & \num{2.725e-04} & \num{2.638e-04} & & \num{7.454e-03} & \num{1.703e-02} & \num{1.499e-02} \\
 & \num{1.997e-02} & & \num{4.707e-04} & \num{4.708e-04} & \num{4.707e-04} & & \num{4.915e-05} & \num{5.110e-05} & \num{4.979e-05} & & \num{1.925e-03} & \num{8.746e-03} & \num{7.651e-03} \\
 & \num{1.331e-02} & & \num{2.090e-04} & \num{2.090e-04} & \num{2.090e-04} & & \num{1.980e-05} & \num{2.035e-05} & \num{2.004e-05} & & \num{8.786e-04} & \num{5.866e-03} & \num{5.124e-03} \\
 & \num{9.985e-03} & & \num{1.175e-04} & \num{1.175e-04} & \num{1.175e-04} & & \num{1.061e-05} & \num{1.082e-05} & \num{1.066e-05} & & \num{5.077e-04} & \num{4.410e-03} & \num{3.850e-03} \\
\addlinespace
\multirow{4}{*}{2} & \num{5.874e-02} & & \num{2.793e-04} & \num{2.787e-04} & \num{2.826e-04} & & \num{7.262e-05} & \num{7.092e-05} & \num{8.536e-05} & & \num{1.373e-03} & \num{1.125e-03} & \num{1.165e-03} \\
 & \num{3.026e-02} & & \num{3.994e-05} & \num{3.992e-05} & \num{4.028e-05} & & \num{9.827e-06} & \num{9.746e-06} & \num{1.114e-05} & & \num{2.745e-04} & \num{2.032e-04} & \num{2.165e-04} \\
 & \num{1.997e-02} & & \num{1.158e-05} & \num{1.157e-05} & \num{1.160e-05} & & \num{2.853e-06} & \num{2.843e-06} & \num{2.931e-06} & & \num{9.552e-05} & \num{7.032e-05} & \num{7.532e-05} \\
 & \num{1.490e-02} & & \num{4.826e-06} & \num{4.825e-06} & \num{4.864e-06} & & \num{1.194e-06} & \num{1.192e-06} & \num{1.343e-06} & & \num{4.534e-05} & \num{3.348e-05} & \num{3.690e-05} \\
\addlinespace
\multirow{4}{*}{3} & \num{7.681e-02} & & \num{8.143e-05} & \num{8.129e-05} & \num{8.124e-05} & & \num{1.357e-05} & \num{1.308e-05} & \num{1.261e-05} & & \num{4.150e-04} & \num{3.580e-04} & \num{3.349e-04} \\
 & \num{3.994e-02} & & \num{5.639e-06} & \num{5.638e-06} & \num{5.638e-06} & & \num{7.684e-07} & \num{7.765e-07} & \num{7.752e-07} & & \num{3.053e-05} & \num{5.762e-05} & \num{5.128e-05} \\
 & \num{2.628e-02} & & \num{1.050e-06} & \num{1.050e-06} & \num{1.051e-06} & & \num{1.256e-07} & \num{1.270e-07} & \num{1.310e-07} & & \num{5.681e-06} & \num{1.708e-05} & \num{1.505e-05} \\
 & \num{1.997e-02} & & \num{3.498e-07} & \num{3.498e-07} & \num{3.499e-07} & & \num{3.895e-08} & \num{3.934e-08} & \num{4.032e-08} & & \num{1.888e-06} & \num{7.609e-06} & \num{6.672e-06} \\
\addlinespace
\multirow{4}{*}{4} & \num{9.986e-02} & & \num{1.468e-05} & \num{1.464e-05} & \num{1.473e-05} & & \num{4.023e-06} & \num{3.433e-06} & \num{3.797e-06} & & \num{5.720e-05} & \num{5.987e-05} & \num{6.984e-05} \\
 & \num{4.993e-02} & & \num{5.743e-07} & \num{5.738e-07} & \num{5.753e-07} & & \num{1.097e-07} & \num{1.077e-07} & \num{1.163e-07} & & \num{3.569e-06} & \num{3.667e-06} & \num{3.399e-06} \\
 & \num{3.328e-02} & & \num{7.940e-08} & \num{7.937e-08} & \num{8.009e-08} & & \num{1.530e-08} & \num{1.514e-08} & \num{1.858e-08} & & \num{5.995e-07} & \num{5.478e-07} & \num{5.495e-07} \\
 & \num{2.496e-02} & & \num{1.911e-08} & \num{1.910e-08} & \num{1.926e-08} & & \num{3.758e-09} & \num{3.737e-09} & \num{4.489e-09} & & \num{1.618e-07} & \num{1.428e-07} & \num{1.432e-07} \\
\bottomrule
\end{tabular}}
\end{table}
\begin{table}
\centering
\caption{Error values from a quadratically anisotropic MMS test problem. The H1, RT, and HRT columns refer to the $Y_p\times W_{p+1}$, $Y_p\times \RT_p$, and hybridized $Y_p\times \RT_p$ discretizations, respectively. The error in the scalar flux, the error in the scalar flux when the exact solution is first projected onto $Y_p$, and the error in the current are presented for each method over a range of values of $p$. Here, the angular flux used to calculate the VEF data is represented with $Y_{p+1}$. Due to this, the maximum accuracy expected is order $p+2$. }
\label{fig:mms_elev_vals}
\resizebox{\textwidth}{!}{\begin{tabular}{ccccccccccccccc}
\toprule
 &  &  & \multicolumn{3}{c}{$\| \varphi - \varphi_\text{ex}\|$}  &  & \multicolumn{3}{c}{$\| \varphi - \Pi \varphi_\text{ex}\|$}  &  & \multicolumn{3}{c}{$\| \vec{J} - \vec{J}_\text{ex}\|$} \\
\cmidrule{4-6}\cmidrule{8-10}\cmidrule{12-14}
$p$ & $h$ & & H1 & RT & HRT & & H1 & RT & HRT & & H1 & RT & HRT \\
\midrule
\multirow{4}{*}{0} & \num{1.997e-02} & & \num{1.564e-02} & \num{1.564e-02} & \num{1.564e-02} & & \num{5.329e-04} & \num{5.301e-04} & \num{5.172e-04} & & \num{7.910e-03} & \num{1.028e-02} & \num{1.028e-02} \\
 & \num{9.985e-03} & & \num{7.827e-03} & \num{7.827e-03} & \num{7.827e-03} & & \num{1.309e-04} & \num{1.323e-04} & \num{1.291e-04} & & \num{2.849e-03} & \num{5.138e-03} & \num{5.138e-03} \\
 & \num{6.657e-03} & & \num{5.219e-03} & \num{5.219e-03} & \num{5.219e-03} & & \num{5.784e-05} & \num{5.878e-05} & \num{5.737e-05} & & \num{1.564e-03} & \num{3.425e-03} & \num{3.424e-03} \\
 & \num{4.993e-03} & & \num{3.915e-03} & \num{3.914e-03} & \num{3.914e-03} & & \num{3.244e-05} & \num{3.305e-05} & \num{3.226e-05} & & \num{1.021e-03} & \num{2.568e-03} & \num{2.568e-03} \\
\addlinespace
\multirow{4}{*}{1} & \num{3.994e-02} & & \num{1.876e-03} & \num{1.875e-03} & \num{1.875e-03} & & \num{1.032e-04} & \num{1.097e-04} & \num{9.773e-05} & & \num{4.561e-03} & \num{3.443e-03} & \num{1.350e-03} \\
 & \num{1.997e-02} & & \num{4.683e-04} & \num{4.683e-04} & \num{4.683e-04} & & \num{1.275e-05} & \num{1.422e-05} & \num{1.256e-05} & & \num{1.210e-03} & \num{1.741e-03} & \num{3.569e-04} \\
 & \num{1.331e-02} & & \num{2.081e-04} & \num{2.081e-04} & \num{2.081e-04} & & \num{3.764e-06} & \num{4.275e-06} & \num{3.754e-06} & & \num{5.459e-04} & \num{1.166e-03} & \num{1.675e-04} \\
 & \num{9.985e-03} & & \num{1.171e-04} & \num{1.171e-04} & \num{1.171e-04} & & \num{1.586e-06} & \num{1.826e-06} & \num{1.594e-06} & & \num{3.090e-04} & \num{8.758e-04} & \num{9.902e-05} \\
\addlinespace
\multirow{4}{*}{2} & \num{5.874e-02} & & \num{2.717e-04} & \num{2.714e-04} & \num{2.714e-04} & & \num{2.914e-05} & \num{2.706e-05} & \num{2.750e-05} & & \num{5.506e-04} & \num{2.855e-04} & \num{2.109e-04} \\
 & \num{3.026e-02} & & \num{3.878e-05} & \num{3.877e-05} & \num{3.877e-05} & & \num{2.075e-06} & \num{1.847e-06} & \num{1.917e-06} & & \num{7.526e-05} & \num{3.918e-05} & \num{3.010e-05} \\
 & \num{1.997e-02} & & \num{1.123e-05} & \num{1.123e-05} & \num{1.123e-05} & & \num{3.945e-07} & \num{3.488e-07} & \num{3.639e-07} & & \num{2.050e-05} & \num{1.148e-05} & \num{9.074e-06} \\
 & \num{1.490e-02} & & \num{4.677e-06} & \num{4.677e-06} & \num{4.677e-06} & & \num{1.224e-07} & \num{1.081e-07} & \num{1.130e-07} & & \num{8.233e-06} & \num{4.894e-06} & \num{3.947e-06} \\
\addlinespace
\multirow{4}{*}{3} & \num{7.681e-02} & & \num{8.049e-05} & \num{8.039e-05} & \num{8.038e-05} & & \num{5.467e-06} & \num{4.567e-06} & \num{4.109e-06} & & \num{2.767e-04} & \num{9.063e-05} & \num{4.817e-05} \\
 & \num{3.994e-02} & & \num{5.591e-06} & \num{5.589e-06} & \num{5.589e-06} & & \num{2.175e-07} & \num{2.337e-07} & \num{2.064e-07} & & \num{2.136e-05} & \num{1.636e-05} & \num{4.094e-06} \\
 & \num{2.628e-02} & & \num{1.043e-06} & \num{1.043e-06} & \num{1.043e-06} & & \num{2.684e-08} & \num{3.006e-08} & \num{2.624e-08} & & \num{4.109e-06} & \num{5.061e-06} & \num{9.155e-07} \\
 & \num{1.997e-02} & & \num{3.477e-07} & \num{3.477e-07} & \num{3.477e-07} & & \num{6.806e-09} & \num{7.758e-09} & \num{6.731e-09} & & \num{1.386e-06} & \num{2.288e-06} & \num{3.453e-07} \\
\bottomrule
\end{tabular}}
\end{table}
\end{document}